\let\amsamp=&
\theoremstyle{definition}
\newtheorem{theorem}{Theorem}[section]
\declaretheorem[name=Definition, numberlike=theorem]{definition}
\declaretheorem[name=Proposition, numberlike=theorem]{proposition}
\declaretheorem[name=Corollay, numberlike=theorem]{corollary}
\declaretheorem[name=Lemma, numberlike=theorem]{lemma}
\declaretheorem[name=Example, numberlike=theorem]{example}
\declaretheorem[name=Remark, numberlike=theorem]{remark}
\definecolor{darkblue}{rgb}{0.0, 0.0, 0.8}
\definecolor{darkred}{rgb}{0.8, 0.0, 0.0}
\definecolor{darkgreen}{rgb}{0.0, 0.65, 0.0}
\newcommand{\ep}{\epsilon}
\newcommand{\ua}{\uparrow}
\newcommand{\da}{\downarrow}
\newcommand{\R}{\mathbb{R}}
\newcommand{\F}{\mathbb{F}}
\newcommand{\Nz}{\mathbb{N}_0}
\newcommand{\N}{\mathbb{N}}
\newcommand{\Z}{\mathbb{Z}}
\newcommand{\0}{\mathbf{0}}
\newcommand{\sbeq}{\subseteq}
\newcommand{\id}{\mathrm{id}}
\newcommand{\Lan}{\mathrm{Lan}}
\newcommand{\norm}[1]{\left\lVert#1\right\rVert}
\newcommand{\barc}{\mathsf{barc}}
\newcommand{\brac}{\barc}
\newcommand{\colim}{\mathrm{colim}}
\newcommand{\binuparrow}{\mathbin{\uparrow}}
\newcommand{\bindownarrow}{\mathbin{\downarrow}}
\newcommand{\vect}{\mathit{vec}_\F}
\newcommand{\Vect}{\mathit{Vec}}
\newcommand{\ignore}[1]{}
\newcommand{\abs}[1]{\left\lvert{#1}\right\rvert}
\newcommand{\dedi}{\mathsf{d}_{\mathsf{Ed}}}
\newcommand{\sfd}{\mathsf{d}}
\newcommand{\calE}{{\mathcal{E}_d}}
\newcommand{\calC}{\mathcal{C}}
\newcommand{\calD}{\mathcal{D}}
\newcommand{\ob}{\mathrm{ob}}
\newcommand{\dist}[1]{\lVert #1 \rVert}
\newcommand{\ones}{\vec{1}}
\newcommand{\dint}{\mathsf{d}_{\mathrm{I}}}
\newcommand{\bott}{\mathsf{d}_{\mathrm{B}}} 
\newcommand{\inj}{\mathrm{inj}}
\newcommand{\edit}{edit}
\newcommand{\cost}{\mathrm{cost}}
\newcommand{\Poset}{{\mathsf{Poset}_d}}
\newcommand{\jsLat}{{{\mathsf{JS}_d}}}
\newcommand{\JsLat}{\jsLat}
\newcommand{\carte}{{\mathsf{Grid}_d}}
\newcommand{\grid}{\mathsf{Grid}_d}
\newcommand{\lmulti}{\left\{\!\left\{}
\newcommand{\rmulti}{\right\}\!\right\}}
\newcommand{\gr}{\mathrm{gr}}
\newcommand{\sym}{\Sigma}
\newcommand{\fr}{\mathrm{fr}}
\newcommand{\Int}{\mathrm{Int}}
\newcommand{\Dgm}{\Int}
\newcommand{\rk}{\mathrm{rk}}
\newcommand{\rank}{\mathrm{rk}}
\newcommand{\faithful}{faithful}
\renewcommand{\subseteq}{\subset}
\newcommand{\joingrid}{join-semigrid}
\newcommand{\persistcolor}{blue}
\newcommand{\pointcolor}{white}
\newcommand{\pointsize}{1.5pt}
\newcommand{\sizeone}{1}
\newcommand{\sizetwo}{1.3}
\newcommand{\sizethree}{1.7}
\newcommand{\x}{b}
\title{Interleaving Distance as a Galois-Edit Distance}
\author[1]{Woojin Kim}
\author[2]{Won Seong}
\date{} 
\affil[1,2]{Department of Mathematical Sciences, KAIST, South Korea}
\begin{document}
\maketitle

\begin{abstract}
The concept of \emph{edit distance}, which dates back to the 1960s in the context of comparing word strings, has since found numerous applications with various adaptations in computer science, computational biology, and applied topology.
By contrast, the \emph{interleaving distance}, introduced in the 2000s within the study of persistent homology, has become a foundational metric in topological data analysis.
In this work, we show that the interleaving distance on finitely presented single- and multi-parameter persistence modules can be formulated as a so-called \emph{Galois-edit distance}. 
The key lies in clarifying a connection between the Galois connection and the interleaving distance, via the established relation between the interleaving distance and free presentations of persistence modules.  In addition to offering new perspectives on the interleaving distance, we expect that our findings will facilitate the study of stability properties of invariants for multi-parameter persistence modules.

As an application of the Galois-edit formulation of the interleaving distance, we present an alternative proof of the well-known bottleneck stability theorem. 
\end{abstract}

\section{Introduction}

The concept of \emph{edit distance} (also called \emph{Levenshtein distance}) originated in the 1960s, first for comparing strings \cite{levenshtein1966binary,wagner1975extension} and later for trees \cite{tai1979tree} in the 1970s.
An edit distance is defined by specifying a collection of admissible edits, each equipped with an associated cost. The distance between two objects is then given by the minimal total cost among all possible sequences of such edits transforming one object into the other. The distance has numerous applications in computational biology, signal processing, and text retrieval (see, e.g., \cite{navarro2001guided}).
 Its applications expanded in the 2010s to the comparison of Reeb graphs \cite{bauer2016edit,bauer2021reeb,di2012reeb,di2016edit}. More recently, in the study of persistent homology, edit distances between filtered simplicial complexes and between integral functions were introduced in \cite{mccleary2022edit,mccleary2025erratum}. These edit distances were employed to establish a stability theorem for persistence diagrams \cite[Theorems 8.4 and 9.1]{mccleary2022edit,mccleary2025erratum}
, providing an alternative to the well-known formulations of the stability theorem \cite{bauer2013induced,chazal2016structure,cohen2007stability}. 

Meanwhile, the \emph{interleaving distance} is a foundational metric in topological data analysis (TDA). 
Originally introduced in 2009 to compare \emph{persistence modules} over the reals \cite{chazal2009proximity}, we briefly elaborate on this concept in a more general framework \cite{bubenik2015metrics,de2017theory}: Given a poset $P$, a persistence module over $P$ (or simply a $P$-persistence module) is a diagram of vector spaces and linear maps indexed by \( P \), formally defined as a functor \( P \to \Vect_\F \), where \( \Vect_\F \) denotes the category of vector spaces over a fixed field \( \F \).
The indexing poset \( P \) is 
often equipped with a 
superlinear family of translations $P\to P$ or a similar structure—for instance, \( \R^d \) with the product order 
and $\ep$-translations $(x_1,\ldots,x_d)\mapsto (x_1,\ldots,x_d)+\ep(1,\ldots,1)$ for $\ep\geq 0$, the setting for \emph{multi-parameter persistence modules} \cite{ carlsson2009theory,lesnick2015theory} (see also \cite{bauer2026algebraic,botnan2022introduction,kim2023persistence} for a comprehensive overview).   For such a poset $P$, by relaxing the notion of natural isomorphism between $P$-persistence modules, one obtains the interleaving distance between $P$-persistence modules. 
In particular, when \(d=1 \), the interleaving distance between 
persistence modules whose structure maps $M_{p\le p'}$ for all pairs $p\leq p'$ in $\R$ have finite rank can be reformulated as the \emph{bottleneck distance} \cite{chazal2009proximity,cohen2007stability}, which is of a more combinatorial nature. This reformulation is made possible by the structural property of such $\R$-persistence modules  \cite{bauer2013induced,chazal2016structure,crawley2015decomposition}.

Other studies on interleaving distances, focusing on aspects such as computation or generalization, include  \cite{bjerkevik2020computing,bjerkevik2021ell,botnan2020relative,botnan2018algebraic,bubenik2017interleaving,curry2014sheaves,de2016categorified,de2018theory,kim2024interleaving,morozov2013interleaving}.
Beyond their established importance,
the following facts point to a potentially deeper connection between edit and interleaving distances:
\begin{itemize}[noitemsep]
\item The Reeb edit distance is the most discriminative  among stable metrics on Reeb graphs~\cite{bauer2021reeb}. 
\item The interleaving distance is the most discriminative among stable metrics on $\R^d$-persistence modules with coefficients in a prime field ~\cite[Corollary~5.6]{lesnick2015theory}.
\item The Reeb interleaving distance is bi-Lipschitz equivalent to the Reeb edit distance \cite{bauer2022quasi,bauer2025tight}.

\item The interleaving distance on finitely presented $\R$-persistence modules is bi-Lipschitz equivalent to the edit distance on persistence diagrams; this follows from  \cite[Theorem 9.1]{mccleary2022edit,mccleary2025erratum} and the bottleneck-interleaving isometry theorem \cite{bauer2013induced,chazal2009proximity,lesnick2015interactive}.
 \end{itemize}
 Taken together, these considerations motivate the search for a suitable notion of edit distance between persistence modules and for its relationship with the interleaving distance between persistence modules.  From another perspective, the fact that this search can potentially yield a practical and tractable characterization of (interleaving-)stable invariants for $\R^d$-persistence modules (e.g. \cite{asashiba2026minimal,bakke2021stability,botnan2022bottleneck,cerri2013betti,clause2022discriminating,fersztand2024harder-stability,landi2018rank,mccleary2022edit,mccleary2025erratum,oudot2024stability,scolamiero2017multidimensional})  has been the foundational motivation for this study.

\paragraph{Our contributions.}
We show that the interleaving distance $\dint$ on finitely presented $\R^d$-persistence modules can be formulated as an edit distance, called the \emph{Galois-edit distance},
via a suitable specification of the objects and edits allowed in the edit sequences (\textbf{\Cref{def:edit distance}} and \textbf{\Cref{thm:dedi=dint}}). 
The key lies in clarifying a connection between the \emph{Galois connection} \cite{ore1944galois}
and the interleaving distance $\dint$, 
via the established relation between 
$\dint$ and free presentations of persistence modules.

The Galois-edit formulation of 
$\dint$ can be viewed as a general method for decomposing a \emph{path} in the metric space $(\mathcal{M},\dint)$ of finitely presented persistence modules, that is, a continuous map $[0,1]\to \mathcal{M}$. In fact, the idea of decomposing such a path into smaller pieces has already been utilized in the establishment of stability of persistence barcodes, and each piece  often corresponds to a Galois-edit. 
However, the idea of decomposing paths into Galois-edits has not been leveraged to formulate the interleaving distance as a ``faithful” edit distance (\textbf{\Cref{rem:in-depth}} and \textbf{\Cref{prop:edit sequence of length two is not optimal}}); here, “faithful” means that a shortest edit sequence between two objects achieving the edit distance can be arbitrarily long.\footnote{Note that any distance $d$ on a set $X$ can trivially be viewed as a non-faithful edit distance by defining the edit cost between any two points $a,b\in X$ as 
$d(a,b)$.}  In this light, this work provides a unified theoretical framework that encompasses “path-decomposition” techniques. 

From another perspective, as the unit of our edit is a Galois connection between subposets of the indexing posets of persistence modules, our results provide a novel order-theoretic viewpoint on 
$\dint$, as an alternative to the combinatorial viewpoint encoded in the bottleneck–interleaving isometry \cite{bauer2013induced,chazal2009proximity,lesnick2015theory}.
In addition, our findings 
provide a simple characterization of interleaving-stable invariants
for multi-parameter persistence modules (\textbf{\Cref{cor:one stability implies another,cor:single-edit is enough}}). 
 Our Galois-edit formulation of $\dint$
 is particularly advantageous as it allows us to harness \emph{at the same time} two key tools when establishing stability of invariants: (1) Rota's theory of M\"obius inversion \cite{rota1964foundations} and (2) the well-known maximality property of the edit distance (\textbf{\Cref{prop:edit_is_universal}} and its subsequent paragraph). We 
 demonstrate the utility of both tools in our alternative proof of the bottleneck stability theorem (\textbf{\Cref{thm:bottleneck stability}} and \textbf{\Cref{rem:usefulness of Galois connection}).}

In establishing the Galois-edit distance formulation of $\dint$, a critical step is to show that the Galois-edit distance is bounded above by $\dint$, for which \textbf{\Cref{thm:Galois_edit_is_presentation_edit}} plays a central role. Roughly speaking, \Cref{thm:Galois_edit_is_presentation_edit} reduces the problem of finding a Galois-edit sequence between persistence modules to one that is reminiscent of an optimal transportation problem; 
namely, finding a Galois-edit between persistence modules is equivalent to specifying their presentations together with a monotone-map–induced bijection between the union of the generators and relations of one presentation and the union of those of the other.

We remark that some of our proofs are obtained by adapting ideas from the literature (see \Cref{rem:in-depth}). Nevertheless, to the best of our knowledge,
\begin{itemize}
\item the Galois-edit formulation presented in \Cref{thm:dedi=dint} is new, as is the resulting proof of \Cref{thm:bottleneck stability};
\item \Cref{thm:Galois_edit_is_presentation_edit} and its proof are new.
\end{itemize}

\paragraph{Other related works.} 
Different approaches exist for reformulating the interleaving distance between persistence modules. Below, we situate our work in the context of the two most closely related studies:
\begin{itemize}[leftmargin=1em]
\item Bjerkevik and Lesnick view the interleaving distance for finitely presented $\R^d$-persistence modules as the $\infty$-\emph{presentation distance}-- a type of an edit distance (but not faithful) -- and extend it and its associated stability results to the $p$-presentation setting (i.e., the $\ell^p$ analogue setting) for $p\in [1,\infty)$
\cite[Definition 3.2 and Theorem 3.7]{bjerkevik2021ell}. More specifically, they provide
an alternative proof of the stability of the fibered barcode \cite{cerri2013betti,landi2018rank} that extends to the $\ell^p$ analogue setting \cite[Theorem 1.7~(iii)]{bjerkevik2021ell}.

Our notion of Galois-edit refines the notion of \emph{presentation matrices with the same underlying matrix} in \cite{bjerkevik2021ell}; namely, a Bjerkevik--Lesnick edit (i.e., presentation matrices with the same underlying matrix) and its cost amount to a (possibly long) sequence of our Galois-edits and its cost; this follows from  \Cref{prop:edit sequence of length two is not optimal} and the fact that the $\infty$-presentation, Galois-edit, and  interleaving distances all coincide (\cite[Theorem 1.7 (i)]{bjerkevik2021ell} and \Cref{thm:dedi=dint}).

\item G\"ulen and McCleary introduce a novel interpretation of the interleaving distance via a \emph{pair} of Galois connections between indexing posets of persistence modules (see \Cref{appendix:comparison with gulen's work}); this interpretation can also be viewed as a (non-faithful) edit distance formulation of the interleaving distance.\footnote{To the best of our knowledge, the use of Galois connections to construct edit distances was initiated in \cite{mccleary2022edit}, although the use was not explicit. The use was further extended to a different setting in \cite{gulen2025ell}.} Their interpretation 
gives rise to an alternative proof of the bottleneck stability theorem \cite[Theorem 7.1]{gulen2022galois}.

An important difference between G\"ulen--McCleary's and our proofs of the bottleneck stability theorem is that they view the persistence diagram as the M\"obius inversion of the birth–death function, whereas we view it as the M\"obius inversion of the rank function, the more classical viewpoint~\cite{cohen2007stability}.
\end{itemize}

The following works are also related to our work to some extent; however, their motivations are orthogonal to our study.

In \cite{bauer2022quasi,bauer2025tight}, Bauer, Bjerkevik, and Fluhr thoroughly compare the Reeb edit distance with
other metrics on Reeb graphs.

In \cite{chacholski2021realisations}, Chacholski, Jin, and Tombari generalize the notions of finitely presented persistence modules and their indexing posets, aiming to exploit discrete approximations of the indexing posets to capture the homological properties of the modules. In particular, the notion of a finitely presented persistence module is generalized to that of a \emph{tame functor} (also referred to as a \emph{discretisable functor}). 
The authors investigate the properties of tame functors and the associated category, including its model structure, and study the computation of minimal resolutions for $\Vect_\F$-valued tame functors.
Further investigations into the model structures of the category of tame functors (along with their abelian structures) are presented in a subsequent work \cite{chacholski2023abelian}.

In \cite{blanchette2023exact}, Blanchett, Br\"ustel, and Hanson establish an adjunction, called \emph{extension} and \emph{contraction} between the categories of $P$- and $Q$-persistence modules---where $P$ is finite and $Q$ is infinite---and thereby lift the study of exact structures on the category of $P$-persistence modules to that of $Q$-persistence modules. A central case is when $Q$ is a product of totally ordered posets and $P$ is a \emph{finite aligned subgrid} of $Q$; this setup subsumes one of our central setups, namely Definition~\ref{def:differentclassesforpersistencemoduledomain}~\ref{def:1Dmorphism}. %

In \cite{aoki2025preservation}, Aoki and Tada utilize Galois connections to gain a deeper understanding of the \emph{interval resolution} \cite{asashiba2023approximation} of persistence modules. In particular, they primarily focus on a certain class of Galois connections between posets $P$ and $Q$ that induce interval-decomposability-preserving adjoint functors between the categories of $P$- and $Q$-persistence modules, and thereby they introduce a technique for transforming a given finite poset into a smaller one with the same interval resolution global dimension. %
In particular, the authors
{pay special attention to} an adjunction between the categories of finitely presented $P$- and $Q$-persistence modules when $Q$ is an \emph{interior system}
of $P$, i.e., $Q$ is a full subposet of $P$ and the inclusion $Q\hookrightarrow P$ admits a right adjoint 
\cite[Definition 2]{erne1993primer} \cite[Section 3]{aoki2025preservation}. 
On the other hand, the adjunction pair in our Galois-edit is a generalization of an adjunction induced by 
a \emph{closure system}; see the paragraph after \Cref{rem:gisanembedding}. %

\paragraph{Acknowledgements.} WK thanks Amit Patel for a helpful conversation. 
This research was supported by the National Research Foundation of Korea (NRF) grant funded by the Korea government(MSIT) (RS-2025-00515946).

\paragraph{Organization.} 
In \Cref{sec:preliminaries}, we review basic concepts concerning posets and  persistence modules.
In \Cref{sec:Edit distance for persistence modules,sec:interleaving as edit}, we reformulate the interleaving distance as the Galois-edit distance, and establish a novel connection between Galois-edits and presentations of persistence modules.
In \Cref{sec:functoriality}, we establish a proof of the bottleneck stability theorem, using Galois connections and maximality of an edit distance as key tools. In \Cref{sec:discussion}, we present concluding remarks.

\Cref{appendix:omitted proofs,appendix:another proof}  contain proofs omitted from the main text.
In \Cref{appendix:two constructibility}, we compare the notion of finite presentability with the notion of constructibility from \cite{gulen2022galois}.
In \Cref{appendix:comparison with gulen's work}, we compare the Galois-edit distance with other edit(-type) distances in applied topology. We also compare our Galois-edit formulation of the interleaving distance (\Cref{thm:dedi=dint}) with previously known connections between the interleaving distance and Galois connections.
In \Cref{appendix:optimal sequence can be long}, we show that a Galois-edit sequence between two persistence modules achieving the Galois-edit distance can be arbitrarily long.

\section{Preliminaries}\label{sec:preliminaries}

In this section, we review basic concepts from order theory, category theory, and the algebraic foundations of persistence modules. 

\subsection{Galois connections between posets}

 In this section, we review basic concepts and results on posets, lattices, and Galois connections.

\label{sec:posets}
A \textbf{preordered set} is a set equipped with a preorder, i.e., a reflexive and transitive relation on the set. 
A \textbf{partially ordered set} (\textbf{poset}) is a preordered set 
whose preorder is antisymmetric.
A poset $P=(P,\leq)$ is \textbf{totally ordered} if $a\le b$ or $b\le a$ for any $a,b\in P$. Given any $a\in P$, we define  $a\binuparrow$ to be the set of all points $b\in P$ such that $a\le b$. Similarly, we define  $a\bindownarrow$ to be the set of all points $b\in P$ such that $a\ge b$. For any $A\subset P$, we define $A\binuparrow=\bigcup_{a\in A}a\binuparrow$ and $A\bindownarrow=\bigcup_{a\in A} a\bindownarrow$.
A \textbf{bottom} of $P$ is an element $\bot \in  P$ such that $\bot\le a$ for all $a \in P$.
A \textbf{top} of $P$ is an element $\top \in P$ such that $\top \ge a$ for all $a\in P$. 
When the clarification is needed, we write $\bot_P$ and $\top_P$ for the bottom and the top of $P$, respectively.
A function of posets $f:P\to Q$ is \textbf{monotone} if $f(p)\le f(q)$ holds whenever $p\le q \in P$. 
For any poset $P$, let $P_{\bot}$ denote the poset on the disjoint union $P\sqcup \{\bot\}$ 
, with the extended partial order satisfying $\bot\le p$ for all $p\in P$. 
Note that for any monotone function $f:P\to Q$, we have a canonical morphism $f_\bot : P_\bot \to Q_\bot$ such that $f_\bot(\bot_P)=\bot_Q$, i.e., $(-)_\bot$ is an endofunctor on the category of posets and monotone functions. %

The \textbf{meet} of a subset $A\sbeq P$ is the greatest lower bound of $A$ and denoted by $\bigwedge A$. Dually, the \textbf{join} of $A$ is the least upper bound of $A$ and denoted by $\bigvee A$. 
A poset is a \textbf{join-semilattice} (resp. \textbf{meet-semilattice}) if the poset has all binary joins (resp. binary meets).
 A poset is a \textbf{bounded join-semilattice} (resp. \textbf{bounded meet-semilattice}) if every finite subset has a join (resp. meet). This includes the empty join $\bigvee \emptyset$ (resp. meet), which is the bottom (resp. top). A poset $P$ is a \textbf{lattice} if %
 {$P$ is both a join- and a meet-semilattice.}
It is not difficult to see that any finite lattice 
contains both a bottom and a top.

An \textbf{embedding of a poset}, or \textbf{order embedding}, is a monotone function that induces an isomorphism of posets between its domain and its image. In this paper, any subposet of a poset is assumed to be a \emph{full} subposet.

For any integer $d\ge 1$, a \textbf{$d$-grid} (or simply a \textbf{grid}) is a poset of the form
$X_1 \times \cdots \times X_d$, where each $X_i \subset \R$ is finite, with
the order defined by
\(
  (x_1,\dots,x_d) \le (y_1,\dots,y_d)
  \quad\text{iff}\quad
  x_i \le y_i \text{ in } \R \text{ for all } i.
\)
Any grid is finite by definition. %
Let $[d]$ denote the set $\{1,2,\ldots,d\}$. For $i\in [d]$, let $\pi_i:\R^d\to\R$ denote the projection
$(a_1,\ldots,a_d)\mapsto a_i$. 

\begin{remark} \label{rem:smallest grid}
For any finite $P\subset \R^d$, the smallest grid containing $P$ is 
    $\displaystyle \prod_{i\in [d]} \pi_i(P).$
\end{remark}

A \textbf{Galois connection} between two posets $P$ and $Q$ is a pair of monotone functions $f : P \rightarrow Q$ and $g:Q\rightarrow P$ satisfying 
 $f(p)\le q \iff p\le g(q)$ for all $p\in P$ and $q\in Q$ \cite{ore1944galois,rota1964foundations}. 
The map 
 $f$ is a \textbf{left adjoint} of $g$, and $g$ is a \textbf{right adjoint} of $f$, and we write $f:P\leftrightarrows{} Q : g$ or $f\dashv g$.
 Note that if either left or right adjoint is given, the other adjoint is unique. 
 
 For the following remark, we recall that any poset $P$ can be regarded as a category whose objects are the elements of $P$, with a unique morphism $p\to q$ whenever $p\le q$ in $P$. 

\begin{remark}
\label{remark:galois basics}

  \begin{enumerate}[label=(\roman*)]
  \item  When $f\dashv g$, the \emph{triangle identities for the adjunction} give two equations 
  $f\circ g\circ f = f$ and $g\circ f \circ g = g$: see, for example,  \cite[IV.5 Theorem 1]{mac2013categories}. \label{remark:triangle_identity}
 
 \item   Let $f_1:P\leftrightarrows Q:g_1$ and $f_2:Q\leftrightarrows R : g_2$ be Galois connections between posets. Then the composition $f_2\circ f_1: P\leftrightarrows R:g_1\circ g_2$ is also a Galois connection \cite[IV.8 Theorem 1]{mac2013categories}.\label{rem:compositionofGalois}

 \item Let $f:P\leftrightarrows Q:g$ be a Galois connection, and 
 let $\iota:f(P)\hookrightarrow Q$  be the inclusion. %
 Then the pairs of morphisms $f:P\leftrightarrows f(P):g|_{f(P)}$ and $\iota : f(P) \leftrightarrows Q : fg$ are Galois connections, and their composition is the given Galois connection $f:P\leftrightarrows Q:g$
 \cite[Section 3.1]{aoki2025preservation}. \label{rem:decomposition_of_Galois_connection}
\end{enumerate}
    
\end{remark}

Necessary and sufficient conditions for the existence of left and right adjoints are as follows.

\begin{proposition}[{\cite[Lemma 4.6.1]{riehl2017category}}]\label{prop:existence of adjoint}
\begin{enumerate}[label=(\roman*)]\label{item:existence of adjoint1}
   \item  Let $f:P\to Q$ be a monotone function and suppose that for any $q\in Q$, the subset $f^{-1}(q\da)\subset P$ has a maximum in $P$. Then, $f$ admits the right adjoint $g:Q\to P$, which is given by $g(q)=\max f^{-1}(q\da)$. Conversely, if $f$ admits a right adjoint, then $f^{-1}(q\da)$ has a maximum in $P$, and the right adjoint $g$ is given by $g(q)=\max f^{-1}(q\da)$.\label{item:existence of right adjoint}

   \item \label{item:existence of adjoint2}
    Let $P\leftarrow Q:g$ be a monotone function and suppose that for any $p\in P$, the subset $g^{-1}(p\ua)\subset Q$ has a minimum in $Q$. Then, $g$ admits 
   the left adjoint $f:P\to Q$, which is given by $f(p)=\min g^{-1}(p\ua)$. Conversely, if $g$ admits a left adjoint, then $g^{-1}(p\ua)$ has a minimum in $Q$,  and the left adjoint $f$ is given by $f(p)=\min g^{-1}(p\ua)$.\label{item:existence of left adjoint} 
\end{enumerate}   
\end{proposition}

Some direct consequences of the preceding proposition are as follows.

\begin{remark} \label{productofadjoints}
    \begin{enumerate}[label=(\roman*)]
    \item Given a Galois connection $f:P\leftrightarrows Q:g$, \Cref{prop:existence of adjoint}~\ref{item:existence of right adjoint} implies $f\circ g \leq \id_Q$, whereas \Cref{prop:existence of adjoint}~\Cref{item:existence of adjoint2} implies $g\circ f \geq \id_P$.\label{item:fg is deflating and gf is inflating}
        \item   Given a Galois connection  $f: P\leftrightarrows{} Q:g$, the left adjoint $f$ preserves all joins in $P$, i.e., for every $A\subset P$ such that the join $\bigvee A$ exists in $P$, the join $\bigvee f(A)$ exists in $Q$ and $f(\bigvee A)=\bigvee f(A)$
        . This implies that if $\bot_P$ exists, then $\bot_Q$ exists in $Q$ and $f(\bot_P)=\bot_Q$, as the empty join is the bottom element. 
       In contrast, the right adjoint $g$ preserves all meets and thus the top. This is a direct corollary of \cite[V.5 Theorem 1]{mac2013categories}.\label{productofadjoints0}
        
        \item Here is a partial converse of the preceding statement. If $P$ is a finite (bounded join-semi)lattice, then a monotone function $f:P\to Q$ has a right adjoint if and only if $f$ preserves all joins. 
         Dually, if $Q$ is a finite (bounded meet-semi)lattice, then a monotone function $P\leftarrow Q:g$ has a left adjoint if and only if $g$ preserves all meets. This is a direct corollary of \cite[V.6 Theorem 2]{mac2013categories}. 
        \label{item:productofadjoints1}

        \item Let $P$ be a finite join-semilattice. For any monotone function $f:P\to Q$ that preserves all binary joins, by \Cref{item:productofadjoints1}, the function $f_\bot: P_\bot \to Q_\bot$ admits a right adjoint. The right adjoint is called the \textbf{floor function}, denoted by $\lfloor - \rfloor^Q_P$. For $p\in P_{\bot}$, we will abbreviate $\lfloor - \rfloor^Q_P(p)$ by $\lfloor p\rfloor^Q_P$. \footnote{ 
        Concepts closely related to our notion of the floor function have appeared in the literature. For example, in \cite[Section 2.2]{chacholski2021realisations}, the authors introduce the notion of \emph{transfer} which is more general than that of the floor function $\lfloor - \rfloor_P^Q$: 
        The floor function $\lfloor - \rfloor_P^Q$ that is associated with a monotone function $f:P\to Q$  requires that $P$ be a \emph{finite} join-semilattice and that $f$ 
        preserve all binary joins, whereas the transfer of $f:P\to Q$ only requires that $P$ be a (not necessarily finite) join-semilattice and that $f$ be a function of \emph{finite-type}, i.e., for all $q\in Q$, $\{p\in P:f(p)\le q\}$ is finite;  even join-preserving property is not required, although the examples considered by the author satisfy it. %
        In an earlier work \cite{erne1993primer}, a full subposet $P$ of a poset $Q$ for which the inclusion $P\hookrightarrow Q$ has a right adjoint is called an \emph{interior system}. 
        } 
        \label{item:productofadjoints2}

\end{enumerate}
\end{remark}

\subsection{Persistence modules, barcodes, and finitely presentedness}

In this section, we review basic notions of persistence modules, their barcodes, and their free presentations. 

Throughout this paper, $\F$ denotes a fixed field. By $\vect$, we will denote the category of finite-dimensional $\F$-vector spaces and $\F$-linear maps. Recall that any poset $P$ can be seen as a category where the object set is $P$ and there is a unique morphism $p\to q$ whenever $p\le q$ in $P$. A \textbf{persistence module over a poset $P$} (or simply a \textbf{$P$-persistence module}) is a functor $M:P\to \vect$. 
For any $P$-persistence modules $M$ and $N$, their direct sum $M\oplus N$ is defined pointwise. A $P$-persistence module $M$ is \textbf{trivial} or \textbf{zero} if $M(p)=0$ for all $p\in P$, and we write $M=0$.  Let $\cong$ denote a natural isomorphism between $P$-persistence modules. 

An \textbf{interval} of a poset $P$ is a nonempty subset $I\subseteq P$ such that: (i)     
 If $p,q\in I$ and $p\leq r\leq q$, then $r\in I$, and (ii)
 $I$ is \textbf{connected}, i.e., for any $p,q\in I$, there is a sequence $p=p_0,
		p_1,\cdots,p_\ell=q$ of elements of $I$ with $p_i$ and $p_{i+1}$ comparable for $0\leq i\leq \ell-1$.
Given any interval $I$, the \textbf{interval module} $\F_I$ is the $P$-persistence module, with
\begin{equation}\label{eq:interval module} \F_I(p) := \begin{cases} \F & \mathrm{if \ } p\in I\\
0 & \mathrm{otherwise}
\end{cases},
\hspace{10mm}
{\F_I}(p\leq q) := \begin{cases} \id_\F & \mathrm{if \ } p\leq q\in I\\
0 & \mathrm{otherwise}\end{cases}.\end{equation}

Every interval module is non-isomorphic to a direct sum of its two nontrivial submodules \cite[Proposition 2.2]{botnan2018algebraic}. 
\label{nom:barcode} 
A $P$-persistence module $M$ is {called} \textbf{interval-decomposable} if it is isomorphic to a direct sum of interval modules. For example, every $\R$-persistence module is interval-decomposable \cite{crawley2015decomposition}.

A \textbf{multiset} is a collection of objects (called \emph{elements}) in which elements may occur more than once. We call the number of instances of an element the \textbf{multiplicity} of the element. 
\begin{definition}[{\cite{botnan2018algebraic,carlsson2004persistence}}]\label{def:barcode}The \textbf{barcode} of an interval decomposable $P$-persistence module $M\cong \bigoplus_{j\in J}\F_{I_j}$ is defined as the multiset $\barc(M):=\lmulti I_j:j\in J\rmulti$. 
\end{definition}

Let $f:P\to Q$ be a monotone function, where $P$ is a finite poset. Since the category $\vect$ admits all finite colimits, any $P$-persistence module $M$ admits a left Kan extension along $f$, and is denoted by $\Lan_f M$. Namely, the $Q$-persistence module $\Lan_f M$ can be defined as
    $(\Lan_f M)_q = \colim (M|_{P_{f\leq q}})$ for all $q\in Q$ (where $P_{f\leq q}:=\{p\in P : f(p)\leq q\}$) and the map $(\Lan_f M)_{q\le q'}$
    for $q\leq q'$ in $Q$ is 
    the canonical map given by the universal property of colimit \cite[X.3 Theorem 1]{mac2013categories}. 
    We note the isomorphism $\Lan_f M\circ f \cong M$.

\begin{definition}\label{KanExtensionOfPersistenceModules}
\begin{enumerate}[label=(\roman*)]
\item     Given a 
    $P$-persistence module $M$, we define $M_\bot: P_\bot\to \vect$ by trivially extending $M$ to $P_\bot$: Let $M_\bot(\bot)=0$ (the zero space) and $M_\bot(\bot\le p)=0$ (the zero morphism) for all $p\in P$. \label{item:bottom extension}
\item Let $P\subset \R^d$ be a finite subposet.
An $\R^d$-persistence module $M$ is \textbf{finitely presented over $P$} (or simply \textbf{$P$-presented}) if $M$ is the left Kan extension of the restriction $M|_P:P\rightarrow \vect$ along the inclusion. We call $M$  \textbf{finitely presented} if $M$ is $P$-presented for some finite $P\subset \R^d$. An equivalent definition is given in \Cref{def:alternative definition of finitely presented over P}.\footnote{More general notions than finitely presentedness appear in \cite[Definition 8.1]{chacholski2021realisations} and \cite[Section 2.2]{aoki2025preservation}.} 
\label{item:constructible} 
\end{enumerate}
\end{definition}

In the appendix,  we show  that \Cref{KanExtensionOfPersistenceModules}~\ref{item:constructible} is a suitable adaptation of the notion of \emph{constructibility} from \cite{gulen2022galois} to $\R^d$-persistence modules (\Cref{prop:constructibility_extends_original}).

\begin{remark} \label{rem:constructibility}%

Let $P$, $Q$ be finite subposets in $\R^d$ with $P\subset Q$. If an $\R^d$-persistence module $M$ is $P$-presented, then $M$ is also $Q$-presented \cite[Proposition 8.2 (2)]{chacholski2021realisations}. %
\label{item:constructible over a larger poset}

\end{remark}

\subsection{Special colimits}
In this section, we review some propositions on colimits of functors, which will be used in a later section. 

\paragraph{Final functors and colimits.} The notion of a \emph{final functor} \cite[IX.3]{mac2013categories} specializes to monotone functions as follows (viewing monotone functions as functors): We call a monotone function $f\colon P \to Q$ \textbf{final} if, for every $q \in Q$, the subposet $P_{f \ge q} := \{ p \in P : f(p) \ge q \}$ of $P$ is connected. As a direct implication of \cite[IX.3 Theorem 1]{mac2013categories}, we have: 
\begin{proposition}
\label{lem:final_functor_preserves_colimit}
 Let $f:P\to Q$ be a monotone function between finite posets, and $M$ a $Q$-persistence module.
 If $f$ is final, then the natural map
 $ \colim (M\circ f) \to \colim\, M$
 is an isomorphism. 
\end{proposition}

\paragraph{Interchange of colimits.} %
Let $\calC$, $\calD$, and $\mathcal{E}$ be categories. For the product category $\calC\times \calD$, let $F:\calC\times \calD \to \mathcal{E}$ be a functor.
Suppose for all $c\in \ob(\calC)$, the colimit of $F(c,-):\calD\to \mathcal{E}$ exists. Then we have a functor 
$\colim_d F(-,d):\calC \to \mathcal{E}$
, which maps $c\mapsto \colim\, F(c,-)$ and for each morphism $f:c\to c'$ in $\calC$ the natural map induced by the natural transformation $F(f,-)$ \cite[V.3 Theorem 1]{mac2013categories}.
Symmetrically, suppose for all $d\in \ob(\calD)$, the colimit of $F(-,d):\calC\to \mathcal{E}$ exists. Then we have a functor 
$\colim_c F(c,-):\calD\to \mathcal{E}$.
As a direct implication of \cite[IX.2 eq.(2)]{mac2013categories}, we have:

\begin{proposition}\label{lem:colimt_commutes_with_colimit}
    Let $\calC$ and $\calD$ be categories with finite sets of objects and morphisms. Consider a functor $F:\calC\times \calD \to \vect$. For the iterated colimits $\colim_c\colim_d F(c,d)$ and $\colim_d \colim_c F(c,d)$, one of them exists if and only if so does the other. In that case, there is a canonical isomorphism:
    \[ \colim_c \colim_d F(c,d) \cong \colim_d \colim_c F(c,d).\]
\end{proposition}

\subsection{Interleaving distance and presentations for multi-parameter persistence modules}

In this section, we review the notion of interleaving distance between $\R^d$-persistence modules. We also review a module-theoretic viewpoint on $\R^d$-persistence modules, as an alternative to the functorial viewpoint given in the preceding section. Finally, we review the relationship between interleaving and presentations. All of the results in this section can be found in \cite{carlsson2009theory,lesnick2015theory}. See also \cite{miller2020homological}.

For any $\ep\in \R$ and $d\in \N$, let $\vec{\ep}:=(\ep,\ep,\ldots,\ep)\in \R^d$. 

\begin{definition}[{\cite{chazal2009proximity,lesnick2015theory}}]\label{def:interleaving}
    Let $M$ be an $\R^d$-persistence module. Let $M(\ep)$ be the \textbf{$\ep$-translation} of $M$ which is an $\R^d$-persistence module, given by $M(\ep)_a=M_{a+\vec{\ep}}$ for $a\in \R^d$, and $M(\ep)_{a\le b} = M_{a+\vec{\ep}\le b+\vec{\ep}}$ for $a\leq b\in \R^d$.  Let $N$ be another $\R^d$-persistence module.
    We say that $M$ and $N$ are \textbf{$\ep$-interleaved} if there are natural transformations  $F:M\Rightarrow N(\ep)$ and $G:N\Rightarrow M(\ep)$ such that  
    \begin{equation}\label{eq:interleaving}
       \mbox{for all $a\in \R^d$},\ \   \text{{$G_{a+\vec{\ep}} \circ F_a$}} =M_{a\le a+2\vec{\ep}}\ \mbox{ and }\  \text{{$F_{a+\vec{\ep}}\circ G_a$}} =N_{a\le a+2\vec{\ep}}.
    \end{equation} The \textbf{interleaving distance} between $M$ and $N$ is given by
    $$ d_I(M,N)=\inf \{\ep\geq0 : \text{$M$ and $N$ are $\ep$-interleaved}\}.$$
    If $M$ and $N$ are not $\ep$-interleaved for all $\ep\geq0$, then we let $\dint(M,N)=\infty$.
\end{definition}

For any integer $d\ge 1$, let $P_d$ denote the collection of all polynomials in
$d$ indeterminates $x_1,\ldots,x_d$ with nonnegative real exponents.
In other words, $P_d$ consists of all finite formal linear combinations
\(
\sum c_{a_1,\ldots,a_d}\, x_1^{a_1} x_2^{a_2}\cdots x_d^{a_d},
\)
where $(a_1,\ldots,a_d)\in [0,\infty)^d$ and $c_{a_1,\ldots,a_d}\in \F$.
More briefly, we write such an element as
\(
\sum c_a x^a, \ \ a\in[0,\infty)^d.
\)
The space $P_d$ is an $\F$-vector space equipped with the naturally defined
addition and multiplication. From this, one can see that $P_d$ is an $\F$-\emph{algebra}.
The $\F$-algebra $P_d$ is naturally graded by $[0,\infty)^d$ where at grade $a\in [0,\infty)^d$, the vector space $(P_d)_a = \{cx^a: c\in \F\}$ lies. %

\begin{definition}
    We define a \textbf{$d$-graded module} as a $P_d$-module $M$ in the usual algebraic sense such that it has a direct decomposition $M=\bigoplus_{a\in\R^d} M_a$ and 
    satisfies  $x^a(M_b)\subseteq M_{a+b}$. A nonzero element $m\in M$  is said to be \textbf{homogeneous} if $m\in M_a$ for some $a\in \R^d$.
\end{definition}

By virtue of the following remark, $d$-graded modules will often be identified with $\R^d$-persistence modules, and vice versa.
\begin{remark}[{\cite[Remark 2.1]{lesnick2015theory}}]  \label{1}
    For any $d$-graded module $M$, the homogeneous part $M_a$ for any $a\in\R^d$ gives an $\F$-vector space, and for $a\le b\in \R^d$, $x^{b-a}$ defines a linear map from $M_a$ to $M_b$. This defines an $\R^d$-persistence module. Conversely, any $\R^d$-persistence module defines a $d$-graded module in a natural way.
\end{remark}

By this remark, the $\ep$-translation of any $d$-graded module is also well-defined.
Namely, given any $d$-graded module $M$, the $\ep$-translated module $M(\ep)$ is given by $M(\ep)_a=M_{a+\vec\ep}$ for all $a\in \R^d$, and action by $x^b$ for $b\in [0,\infty)^d$ is defined via the structure maps $M(\ep)_a = M_{a+\vec\ep}\to M_{a+b+\vec\ep}= M(\ep)_{a+b}$ for all $a\in \R^d$.

We regard the algebra $P_d$ as a $d$-graded module by defining, for each $a \in [0,\infty)^d$, $(P_d)_a$ to be the one-dimensional $\F$-vector space generated by the monomial $x^a$. %
Similarly, given a singleton set $\{w\}$, let $P_d[w]$ be the free $d$-graded module with basis $\{w\}$, which is isomorphic to the $d$-graded module $P_d$ via the isomorphism $f w\mapsto f$ for all $f\in P_d$. In light of \Cref{1}, both $P_d$ and $P_d[w]$ can be seen as the interval module $\F_{\vec{0}\binuparrow}:\R^d\rightarrow \vect$, as defined in \eqref{eq:interval module}. 

A \textbf{graded set} is a set of points where each point is assigned an element of $\R^d$.
Formally, fix a set of symbols $X$ of arbitrary size. 
A graded set is %
a set of ordered pairs 
$ W\subset X\times \R^d$
of a symbol and a grade. 
We say that a graded set $W$ is \textbf{\faithful} if for each $w\in X$ there is at most one $a\in \R^d$ such that $(w,a)\in W$. %

The \textbf{set of symbols} in $W$, %
\{$w\in X$:\ \mbox{there exists $a\in \R^d$ such that} $(w,a)\in W$\}, is denoted by $\sym(W)$. The set of grades of the points in $W$, \{$a\in \R^d$:\ \mbox{there exists $w\in X$ such that} $(w,a)\in W$\}, is called the \textbf{grade set} of $W$ and denoted by $\gr(W)$. For a faithful graded set $W$, there is a \textbf{grading function} $\gr_W: \sym(W)\to \gr(W)$ %
that maps each symbol its (well-defined) \textbf{grade}. %
For example, any set of homogeneous elements in an $\R^d$-persistence module $M$ is a graded set provided that the associated grades of each point is inherited from $M$.

Suppose $P,Q$ be posets in $\R^d$ and $f:P\to Q$ be monotone. Let $W$ be a graded set whose grade set is contained in $P$. Then we define $f(W)$ to be the graded set $\{(w,f(a)):(w,a)\in W\}$.
Also, for any $\ep\in \R$, we define the \textbf{translated graded set} $W(\ep)= \{ (w,a+\vec\ep):(w,a)\in W\}.$ %

\begin{definition}\label{def:free}
    \begin{enumerate}[label=(\roman*)]
        \item Let $S$ be a set. The \textbf{free $\F$-vector space on basis $S$}, denoted by $\fr_\F[S]$,
        is the space of formal linear combinations $\sum_{s\in S} a_s s$ with $a_s \in \F$, where $a_s = 0$ for all but finitely many $s \in S$. %
        Any set map $f:S_1\to S_2$ induces the linear map $f^*:\fr_\F[S_1]\to \fr_\F[S_2]$ sending $\sum_{s\in S_1} a_ss$ to $\sum_{s\in S_1} a_sf(s)$. 
        \label{item:free1}
        \item  \label{item:free2}
    Let $W$ be a faithful graded set. The 
    \textbf{free $\R^d$-persistence module on basis $W$}
    is the $d$-graded module (see \Cref{fig:free})
    \[fr[W]=\bigoplus_{(w,a)\in W} P_d[w](-a)=\left\{\sum_{w\in W} f_w w: \mbox{$f_w\in P_d$ are zero for all but finitely many $w\in W$}\right\}.\]
    \end{enumerate}
\end{definition}

In what follows, whenever we consider $\fr[W]$ for a graded set $W$, we assume that $W$ is faithful. %

\begin{figure}
\begin{center}
\begin{tikzpicture}[scale=1.0]

  \draw[->] (-0.2,0) -- (4.2,0) node[below] {$x$};
  \draw[->] (0,-0.2) -- (0,4.2) node[left] {$y$};

  \fill[blue!20, opacity=0.6]
    (0,1) -- (4,1) -- (4,4) -- (0,4) -- cycle;
  \draw[blue!70]
    (0,1) -- (4,1) -- (4,4) -- (0,4) -- cycle;
  \fill[blue!70] (0,1) circle (1.5pt);
  \node[blue!70, left] at (0,1) {$w_2$};
  \fill[red!20, opacity=0.6]
    (1,0) -- (4,0) -- (4,4) -- (1,4) -- cycle;
  \draw[red!70]
    (1,0) -- (4,0) -- (4,4) -- (1,4) -- cycle;
  \fill[red!70] (1,0) circle (1.5pt);
  \node[red!70, below] at (1,0) {$w_1$};

  \node[blue] at (-1.6,2.5) {$P_d[w_2](-(0,1))$};
  \node[darkred]  at (5.6,0.5) {$P_d[w_1](-(1,0))$};

\end{tikzpicture}
\end{center}
\caption{Illustration of $\fr[W]\cong \F_{(1,0)\binuparrow}\oplus \F_{(0,1)\binuparrow}$ for the graded set $W:=\{(w_1,(1,0)),(w_2,(0,1))\}.$ %
For example, the $\F$-vector space $\fr[W]_{(2,2)}$ is freely generated by the homogeneous elements
$(x_1x_2^2)w_1$ and $(x_1^2x_2)w_2$ of $\fr[W]$.}\label{fig:free}
\end{figure}

For any two graded sets $W_1$ and $W_2$, we will use the notation $\fr[W_1,W_2]$ for $\fr[W_1\sqcup W_2]$. 

\begin{remark}\label{rem:translationsubmodule} Fix $\ep\geq 0$.
    \begin{enumerate}[label=(\roman*)]
        \item  
        For any $\R^d$-persistence module $M$, 
        we have the bijection $M\to M(\ep)$, which is the disjoint union of the isomorphisms $M_{a+\vec{\ep}} \cong M(\ep)_a$ for all $a\in \R^d$.
        
        For a subset $Y\subset M$ of homogeneous elements, let $Y(\ep)$ denote the image of $Y$ under the canonical bijection $M\to M(\ep)$, %
        and thus $Y(\ep)$ is a set of homogeneous elements in $M(\ep)$. \label{rem:translationsubmodule1}
        \item For a graded set $W$, we have a canonical inclusion $\fr[W(-\ep)]\to \fr[W]$, the map  given by the structure map $\fr[W(-\ep)]_a\cong \fr[W]_{a-\vec{\ep}}\to \fr[W]_a$. Hence, we regard $\fr[W(-\ep)]$ as a submodule of $\fr[W]$. Similarly, we regard $\fr[W_1(-\ep),W_2]$ as a submodule of $\fr[W_1,W_2]$ \cite[Remark 4.2]{lesnick2015theory} \label{rem:translationsubmodule2}.
        \item Let $Y$ be a homogeneous subset of $\fr[W_1, W_2]$. Since $Y(\ep)$ can be regarded as a homogeneous subset of $\fr[W_1(\ep), W_2(\ep)]$, for any $\delta, \eta\ge \ep$, the set $Y(\ep)$ can be regarded as a homogeneous subset of $\fr[W_1(\delta), W_2(\eta)]$.\label{rem:translationsubmodule3}
    \end{enumerate}
\end{remark}

\begin{definition}\label{def:presentation}
\begin{enumerate}[label=(\roman*)]
    \item Let $S$ be a set %
     and let $R\subset \fr_\F[S]$. Let $\langle R \rangle$ denote the subspace of $\fr_\F[S]$ generated by $R$. We call the quotient space $\langle S|R\rangle_\F:=\fr_\F[S]/\langle R\rangle$ the \textbf{presentation of an $\F$-vector space} with set of generators $S$ and set of relations $R$.
    \label{item:presentation of a vector space}
    \item Let $W$ be a graded set. A \textbf{relation} in $\fr[W]$ is a pair $(r,a)$ such that $r$ is an $\F$-linear combination of symbols in $W$, and $a\in \R^d$ is greater than the grade of each symbol in $r$ with nonzero coefficient. 
    For such an $r=\sum_{i}c_iw_i$, i.e., $c_i\in \F\setminus\{0\}$ and $\gr(w_i)\le a$, we associate a homogeneous element $\sum_{i} c_i x^{a-\gr(w_i)} w_i$ of grade $a$ in $\fr[W]$ which we will also denote by $r$. \label{item:relation_in_free_module}

    \item Let $W$ be a graded set, and $Y$ be a set of relations in $\fr[W]$. We can also regard $Y$ as a homogeneous subset of $\fr[W]$, in virtue of \Cref{item:relation_in_free_module}.
    Let $\langle Y \rangle$ be the $d$-graded submodule of $\fr[W]$ generated by $Y$. We call $\left< W|Y\right>:=\fr[W]/\langle Y\rangle$ the \textbf{presentation of an $\R^d$-persistence module} %
    with generator set $W$ and relation set $Y$. We call 
    $\langle W|Y\rangle $ \textbf{finite} if both $W$ and $Y$ are finite. %
    \label{item:presentation of a module}
   \item For each $i=1,2$, let $W_i$ be a graded set and $Y_i$ be another graded set that is a set of relations in $\fr[W_i]$. We write $\langle W_1, W_2|Y_1, Y_2\rangle$ for $\langle W_1\sqcup W_2 | Y_1\cup Y_2\rangle$. \label{item:union}
\end{enumerate}   
\end{definition}

Let $M\cong\langle W|Y\rangle$ be a presentation of an $\R^d$-persistence module. Although $Y$ may not be faithful, we regard the set of symbols in $Y$ as the disjoint union $\sym(Y)=\bigsqcup_{y\in Y} \pi_1(y)$, and define the \textbf{grading function} of $M$ (w.r.t. the given presentation) $\gr_M:\sym(W)\sqcup \sym(Y)\to \R^d$ as the disjoint union of the grading functions $\gr_W$ and $\gr_Y$. %

\begin{definition}[Alternative of \Cref{KanExtensionOfPersistenceModules}~\ref{item:constructible}]\label{def:alternative definition of finitely presented over P} 
Let $P\subset\R^d$ be a finite subset. An $\R^d$-persistence module $M$ is \textbf{finitely presented over $P$} (or simply $P$\textbf{-presented}) if $M$ admits a finite presentation $\langle W| Y\rangle$ such that $\gr(W), \gr(Y)\subset P$.  We call $M$ \textbf{finitely presented} if $M$ is $P$-presented for some finite $P\subset \R^d$. 
\end{definition}

The equivalence between the preceding definition of finitely presentedness and the one in \Cref{KanExtensionOfPersistenceModules}~\ref{item:constructible} follows from \cite[Proposition 10.3 (2)]{chacholski2021realisations}. The equivalence between the preceding definition of $P$-presentedness and that in \Cref{KanExtensionOfPersistenceModules}~\ref{item:constructible} is not stated in \cite[Proposition 10.3 (2)]{chacholski2021realisations}, but can be inferred from its proof (and the proofs of 
\cite[Lemma 2.1]{bauer2025multi}, \cite[Remark 4.12]{aoki2025preservation}, and \cite[Proposition 2.9]{lesnick2015interactive}).

\begin{remark}
    Let $f:P\to Q$ a monotone function between posets. Let $W$ and $Y$ be graded sets such that $Y$ is a set of homogeneous elements in $\fr[W]$. Since $f$ is monotone, the elements of $f(Y)$ are homogeneous elements of $\fr[f(W)]$. Thus, if $\langle W|Y\rangle$ is a presentation of an $\R^d$-persistence module, then so is $\langle f(W)|f(Y)\rangle$. 
\end{remark}

\begin{remark}\label{rem:constructible_over_grid_closure} Let $M$ be an $\R^d$-persistence module with a finite presentation $\langle W \vert Y\rangle$. Then $M$ is $P$-presented where $P$ is any poset containing $\gr_{W\sqcup Y}(W\sqcup Y)$.
In particular, $P$ can be taken to be the smallest grid containing $\gr_{W\sqcup Y} (W\sqcup Y)$ (cf. \Cref{rem:smallest grid}). 
\end{remark}

\begin{example} \label{ex:presentations_are_valid}
    Let $\ep>0$ and let there be graded sets $W_1,W_2$ and homogeneous sets $Y_1\subset \mathrm{fr}[W_1,W_2(-\ep)]$, $Y_2\subset \mathrm{fr}([W_1(-\ep),W_2])$. Then, by \Cref{rem:translationsubmodule}~\ref{rem:translationsubmodule3},   we have
\[Y_1(-\ep)\subset \fr[W_1(-\ep),W_2]\ \mbox{and}\ Y_2(-\ep)\subset \fr[W_1,W_2(-\ep)].\]
       According to \Cref{def:presentation}~\ref{item:presentation of a module} and~\ref{item:union}, the following are valid presentations:
         \[M:=\langle W_1,W_2(-\ep)|Y_1,Y_2(-\ep)\rangle \ \mbox{and}\ N:=\langle W_1(-\ep),W_2|Y_1(-\ep),Y_2\rangle.\]
\end{example}

The following theorem establishes a connection between the interleaving distance and the  presentations of $\R^d$-persistence modules.

\begin{theorem}[{\cite[Theorem 4.4]{lesnick2015theory}}]\label{lesnickpresentation}
 Let $M$ and $N$ be $\R^d$-persistence modules. Then $M$ and $N$ are $\ep$-interleaved if and only if there exist $d$-graded sets $W_1, W_2$ and homogeneous subsets $Y_1\subset \fr[W_1,W_2(-\ep)]$ and $Y_2 \subset \fr[W_1(-\ep),W_2]$ such that 
    \begin{equation}\label{eq:lesnick_presentation}
        M \cong \langle W_1,W_2(-\ep)|Y_1,Y_2(-\ep)\rangle,\  \ \ \ \  \ \ 
        N \cong \langle W_1(-\ep),W_2|Y_1(-\ep),Y_2\rangle.
    \end{equation}
    Also, when $M$ and $N$ are finitely presented, we can take the presentations
    above to be finite.
    \end{theorem}

\section{Galois-edit distance on $\R^d$-persistence modules}
\label{sec:Edit distance for persistence modules}
In this section, we introduce three edit distances on finitely presented $\R^d$-persistence modules.
Later, we will show that all of these distances coincide with the interleaving distance. 

We begin by considering some special finite subposets in $\R^d$ that serve as subposets determining $\R^d$-persistence modules. All such  subposets will be assumed to be \emph{full} subposets of $\R^d$.

Recall that, for any integer $d\ge 1$, a $d$-grid (or simply a grid) is a poset of the form
$X_1 \times \cdots \times X_d$, where each $X_i \subset \R$ is finite, with
the order defined by
\(
  (x_1,\dots,x_d) \le (y_1,\dots,y_d)
  \quad\text{iff}\quad
  x_i \le y_i \text{ in } \R \text{ for all } i.
\)
We define a $(d$-)\textbf{grid morphism} as a monotone function between $d$-grids of the
form $\prod_{i=1}^d f_i$, where each $f_i$ is a monotone function between
the corresponding $i$-th coordinate sets.

\begin{remark} \label{rem:productofadjoints3}
 The left or right adjoint of any grid morphism, whenever it exists, is again a grid morphism, as we now show. Let a $d$-grid morphism $f:=\prod_{i\in [d]} f_i$ be a left (resp., right) adjoint of a Galois connection. By \Cref{productofadjoints0}, $f$ preserves the bottom (resp., top).
        This implies that for each $i\in [d]$, $f_i$ preserves the bottom (resp., top), thus $f_i$ has the right (resp., left) adjoint $g_i$. It is easy to see that the grid morphism $\prod_{i\in [d]} g_i$ is the right (resp., left) adjoint of $f$. 
\end{remark}

We call a finite subposet $P\subset \R^d$ a \textbf{\joingrid{}} if $P$ is a join-semilattice and the inclusion $P\hookrightarrow \R^d$ preserves all binary joins. Equivalently, a finite subposet $P\subset \R^d$ is a \joingrid{} if it is closed under binary joins in $\R^d$. 
Let $P$ be a \joingrid{}. By \Cref{productofadjoints}~\ref{item:productofadjoints2}, there is a floor function $\lfloor-\rfloor^{\R^d}_P:\R^d_\bot\to P_\bot$, which we simply write $\lfloor - \rfloor_P$.

\begin{remark}
\label{item:Kan_extension_by_floor_function1} When $P\subset \R^d$ is a join-semigrid, we have the following characterization of $P$-presented $\R^d$-persistence modules: An $\R^d$-persistence module $M$ is $P$-presented if and only if $M_\bot = M_\bot \circ \lfloor - \rfloor_P$. This statement is proved as follows.

    First, note that for any two $\R^d_\bot$-persistence modules $M$ and $N$ such that $M(\bot)=N(\bot)=0$, $M= N$ if and only if $M|_{\R^d}= N|_{\R^d}$. In particular, $M_\bot=M_\bot \circ \lfloor-\rfloor_P$ if and only if $M=M_\bot \circ \lfloor-\rfloor_P|_{\R^d}$. Next, the left Kan extension of any functor $M:P\to \vect$ along the inclusion $\iota:P\hookrightarrow \R^d$, denoted by $\Lan_\iota M$, is (up to isomorphism) the $\R^d$-persistence module such that
    for $a\in \R^d$ 
    $$\left( \Lan_\iota M\right)_a=\begin{cases}
        M(\lfloor a\rfloor_P), & \text{if } \lfloor a \rfloor_P \in P\\
        0, & \text{otherwise}
    \end{cases}$$
    and for $a\le a'$, the map $\left(\Lan_\iota M\right)_a\to\left(\Lan_\iota M\right)_{a'}$ is naturally defined \cite[Theorem 6.2.1]{riehl2017category}.\footnote{ 
    In particular, when $P$ is a $d$-grid, the left Kan extension along the inclusion $P\hookrightarrow \R^d$ is referred to as an \emph{extension functor} \cite[Definition 7.9]{blanchette2023exact}.}
    Therefore, we have $\Lan_\iota M=M_\bot \circ \lfloor-\rfloor_P|_{\R^d}$. %
    By \Cref{KanExtensionOfPersistenceModules}~\ref{item:constructible}, it follows that $M$ is $P$-presented if and only if $M = M_\bot \circ \lfloor - \rfloor_P|_{\R^d}$ (or equivalently, $M_\bot = M_\bot \circ \lfloor-\rfloor_P$).\footnote{In the setting of \Cref{item:Kan_extension_by_floor_function1}, \cite{chacholski2021realisations} refers to the floor function $\lfloor - \rfloor_P$ as a \emph{transfer}, to $M$ as being \emph{discretised} by $\iota$, and to $M|_P$ as a \emph{discretisation} of $M$. %
    }
\end{remark}

\begin{definition} \label{categoryofposets} For any $d\in \N$, we define three categories $\Poset$, $\jsLat$ and  $\carte$
such that a latter is a subcategory of a former.

\label{def:differentclassesforpersistencemoduledomain}
    \begin{enumerate}[label=(\roman*)]  
        \item $\Poset$ is the category consisting of the following:
        \begin{itemize}
            \item Objects: finite posets in $\R^d$,
            \item Morphisms: monotone functions for which the right adjoint exists. 
        \end{itemize}
        \item $\jsLat$ is the category consisting of the following:
        \begin{itemize}
            \item Objects: \joingrid{}s in $\R^d$ %
             (note: any \joingrid{} has the top element),
            \item Morphisms: monotone functions for which the right adjoint exists.
        \end{itemize}
        \item $\carte$ is the category consisting of the following:
        \begin{itemize}
            \item Objects: $d$-grids,
            \item Morphisms: grid morphisms for which the right adjoint exists (note: the right adjoint is also a grid morphism by \Cref{rem:productofadjoints3}).
        \end{itemize} \label{def:1Dmorphism}
    \end{enumerate}
\end{definition}

\Cref{remark:galois basics}~\ref{rem:compositionofGalois} guarantees that each of $\Poset$, $\jsLat$, and $\carte$ forms a category. In particular, when $d=1$, the categories $\Poset$, $\jsLat$, and $\carte$ coincide. %
However, when $d \ge 2$, $\Poset$ contains strictly more morphisms than $\jsLat$, which in turn contains strictly more morphisms than $\grid$; namely there exists a morphism in $\Poset$ (resp. $\jsLat$) that is not an isomorphism and does not belong to $\jsLat$ (resp. $\grid$). For instance, let $d=2$ and $$P:=\{(1,0),(0.5,0),(0,1),(0,0.5)\}, \quad \mbox{and} \quad Q:=\{(1,0),(0,1)\}.$$ Define the monotone function $f:P\to Q$ by
    $
    f(p_1,p_2)=(\lceil p_1\rceil_\Z,\lceil p_2\rceil_\Z)
    $
    where $\lceil -\rceil_\Z$ is the standard ceiling function $\R\to \Z$.
    Then $f$ admits the inclusion $\iota:Q\hookrightarrow P$ as the right adjoint. %
    Thus, $f$ is a morphism in $\Poset$, which is not an isomorphism. Also, since $P$ and $Q$ are not join-semilattices, $f$ is not a morphism in $\jsLat$. 
    
 We also remark that, since grids are \emph{finite} lattices, grid morphisms preserve all binary joins if and only if they preserve all nonempty joins. In fact, grid morphisms preserve all binary joins, and thus a grid morphism preserves all joins if and only if it preserves the bottom (i.e., the empty join). This implies that, by \Cref{productofadjoints}~\ref{item:productofadjoints1}, morphisms in the category $\carte$ are exactly grid morphisms that preserve bottoms.

 In what follows, we let $\calE$ be one of $\Poset$, $\jsLat$, or $\carte$.  Given any category $\calC$, let $\ob(\calC)$ denote the collection of objects in $\calC$.  
Let $M$ and $N$ be finitely presented $\R^d$-persistence modules. Suppose that $M$ is $P$-presented and $N$ is $Q$-presented for $P,Q \in \mathrm{ob}(\calE)$.
   An \textbf{$\calE$-edit} from $M$ to $N$ is a morphism $f:P\rightarrow Q$ in $\calE$ such that  $M|_P\circ g\cong N|_Q$, where $g:Q\rightarrow P$ is the right adjoint of $f$. In other words, $f$ is an $\calE$-edit from $M$ to $N$ iff $N|_Q$ is naturally isomorphic to the left Kan extension of $M|_P$ along $f$.

\begin{definition}\label{def:edit distance} 
Let $f:P\to Q$ be a monotone function between finite subsets $P,Q\subseteq\R^d$.
    The \textbf{maximum displacement} of $f$ is $\dist{f}:=\max \{\norm{p-f(p)}_{\infty}:p\in P \}.$
An \textbf{$\calE$-edit sequence} between finitely presented $\R^d$-persistence modules $M$ and $N$ is a finite sequence $M=M_0, \ldots, M_m=N$ of $\R^d$-persistence modules, together with a sequence of $\calE$-\edit{}s $f_1,\dots, f_m$ where each $f_i$ is an $\calE$-\edit{} $M_{i-1}\rightarrow M_i$ or $M_{i}\rightarrow M_{i-1}$. The \textbf{cost} of the $\calE$-edit sequence is the sum $\sum_{i=1}^m \lVert f_i \rVert$. 
The \textbf{$\calE$-edit distance} between $M$ and $N$
is defined as the infimum of the cost of all $\calE$-edit sequences between $M$ and $N$, denoted by $\dedi^\calE(M,N)$. If there is no such edit sequence, then $\dedi^\calE(M,N)$ is defined to be $\infty$. 

We also refer to an $\calE$-edit sequence and the distance $\dedi^\calE$ as a \textbf{Galois-edit sequence} and the 
\textbf{Galois-edit distance}, respectively.
\end{definition}

 It is easy to see that $\dedi^\calE$ is an extended pseudometric on the collection of finitely presented $\R^d$-persistence modules. In what follows, any extended pseudometric will simply be referred to as a metric. 

\begin{remark}\label{relationbetweenedits} 
\begin{enumerate}[label=(\roman*)] 
\item  The inclusions of the categories imply that
\(\dedi^{\Poset}\le \dedi^{\jsLat}\le \dedi^{\carte}.\label{eq:1}\)
\label{item:relationbetweenedits1}
\item 
More generally, let $\calC$ be any category 
that contains $\carte$ and is contained in $\Poset$. Then, we obtain the corresponding edit distance $\dedi^\calC$, defined in the same way as in the preceding definition, and we have
\(\dedi^\Poset\le \dedi^\calC \le \dedi^\carte.\)
\end{enumerate}
\end{remark}

 Next, we will investigate basic properties of $\dedi^\calE$.

\begin{proposition}\label{prop:distance zero}
If there exists an $\calE$-edit sequence of cost zero between two finitely presented $\R^d$-persistence modules, then the two modules are naturally isomorphic as functors $\R^d\to\vect$. 
\end{proposition}

The following lemma is useful for proving the preceding proposition. 

\begin{lemma} \label{prop:trivial_edit}
Let an inclusion $f:P \hookrightarrow Q$ be a morphism in $\calE$, and let $M$ and $N$ be $P$- and $Q$-presented $\R^d$-persistence modules, respectively. 
Then $f$ is an $\calE$-\edit{} $M\to N$ if and only if $M \cong N$.
\end{lemma}

\begin{proof}%
Let $\iota_P:P\hookrightarrow \R^d$, $\iota_Q:Q\hookrightarrow \R^d$ be the inclusions. %
We recall that $f$ is an $\calE$-edit $M\to N$ if and only if $N|_Q\cong \mathrm{Lan}_f M|_P$. 
We first prove the forward implication. Suppose $N|_Q\cong \mathrm{Lan}_f M|_P$. This assumption and the functoriality of left Kan extension guarantee the isomorphisms ($\ast\ast$) and ($\ast$) below, respectively.
\[M\cong \Lan_{\iota_P} M|_P \stackrel{(\ast)}{\cong} \Lan_{\iota_Q} \left(\Lan_{f} M|_P\right) \stackrel{(\ast\ast)}{\cong} \Lan_{\iota_Q} N|_Q \cong N.
\]
Conversely, suppose $M\cong N$. Then, we have the isomorphism $(\ast\ast)$, because of the other isomorphisms above and the assumption $M\cong N$. 
Now, by restricting both $\Lan_{\iota_Q} \left(\Lan_{f} M|_P\right)$ and $\Lan_{\iota_Q} N|_Q$ to $Q$, we obtain the desired isomorphism $\Lan_f M|_P \cong N|_Q$.
 \end{proof}

\begin{proof}[Proof of \Cref{prop:distance zero}]
Consider any $\calE$-edit sequence $M=M_0\stackrel{f_1}{\leftrightarrow} \cdots\stackrel{f_{n}}{\leftrightarrow}M_n=N$ whose cost is zero. Then, for each $i$, $\dist{f_i}=0$ and thus $f_i$ is a poset inclusion. \Cref{prop:trivial_edit} implies that $M_{i-1}\cong M_{i}$ for each $i$, and thus $M\cong N$.
\end{proof}

Next, we provide a characterization of a pair of persistence modules that have finite edit distance.
Let $P\in \ob(\calE)$ and let $M$ be a $P$-presented $\R^d$-persistence module. %
 Since $P$ is a finite join-semilattice,  $\bigvee P=\top_P\in P$. In addition,
 for any $a \ge \top_P$, 
we have that $M_{\top_P}\cong M_a$, and
 $M_{\top_P\le a}$ is the identity map. Note that the space $M_{\top_P}$ does not depend on the choice of $P\in \ob(\calE)$ as long as $M$ is $P$-presented.  Thus,  $M(\infty):=M_{\top_P}$ is well-defined.

\begin{proposition}\label{prop:components_of_category}
    Given any finitely presented $\R^d$-persistence modules $M$ and $N$, $\dedi^{\calE}(M,N)<\infty$ if and only if $M(\infty)\cong N(\infty)$ (proved in \Cref{appendix:omitted proofs}).
\end{proposition}

\begin{figure}
    \centering
    \begin{tikzpicture}[scale=1.2, every node/.style={scale=0.9}]
  \begin{scope}[scale= 1, shift={(0,0)}]%
    
    \fill[\persistcolor!40] (0,0) rectangle (1,1);
    \fill[\persistcolor!40] (0,1) rectangle (1,2.3);
    \fill[\persistcolor!60] (1,0) rectangle (2,1);
    \fill[\persistcolor!40] (2,0) rectangle (3.3,1);
    \fill[\persistcolor!40] (0,1) rectangle (1,2.3); 
    \fill[\persistcolor!40] (1,1) rectangle (2,2.3);
    \fill[\persistcolor!20] (2,1) rectangle (3.3,2.3);

    \node (b) at (1.5,0.5) {$\F^2$};
    \node (c) at (2.5,0.5) {$\F$};
    \node (d) at (0.5,1.5) {$\F$};
    \node (f) at (2.5,1.5) {$0$};

    \draw[->, thick] (0,-0.5) -- (0,2.5) node[above] {$y$};
    \draw[->, thick] (-0.7,0) -- (3.5,0) node[right] {$x$};

    \draw[-, thin, color=\persistcolor!40] (0,1) -- (1,1);
    \draw[-, thin, color=\persistcolor!40] (1,1) -- (1,2.3);
    \draw[-, thin] (1,1) -- (3.3,1);
    \draw[-, thin] (1,0) -- (1,1);
    \draw[-, thin] (2,0) -- (2,2.3);

    \foreach \x in {0,1,2} 
    \foreach \y in {0,1}
    \draw[color=black!80,fill=orange!40, thick] (\x,\y) circle (\pointsize) ;

    \node at (-0.5,1) {\textcolor{orange!80}{$P$}};
  \end{scope}

  \begin{scope}[shift={(5,0)}]

    \fill[red!40] (0,0) rectangle (2,2.3); %
    \fill[red!40] (0,0) rectangle (3.3,1); %
    \fill[red!20] (2,1) rectangle (3.3,2.3); %

    \node at (1,0.8) {$\F$};
    \node at (2.5,1.5) {$0$};

    \draw[->, thick] (0,-0.5) -- (0,2.5) node[above] {$y$};
    \draw[->, thick] (-0.7,0) -- (3.5,0) node[right] {$x$};

    \draw[-, thin] (2,1) -- (3.3,1);
    \draw[-, thin] (2,1) -- (2,2.3);
    
    \foreach \x in {0,2} 
    \foreach \y in {0,1}
    \draw[color=black!80,fill=blue!50] (\x,\y) circle (\pointsize) ;

    \node at (-0.5,1) {\textcolor{blue!70}{$Q$}};
  \end{scope}

  \node at (1.5,-0.7) {$M$};
  \node at (6.5,-0.7) {$N$};
\end{tikzpicture}

    \caption{Illustration of $M$ and $N$ from Example~\ref{ex:edit_makes_simpler}. The grids $P$ and $Q$ are depicted as 6 dots and 4 dots, respectively.
    }
    \label{fig:example_of_an_edit} 
\end{figure}

Recall that $\calE$-\edit{}s are left adjoints between posets (i.e., the left maps in Galois connections). 
The following proposition implies that any $\calE$-edit sequence may be taken to consist solely of surjective monotone functions whose right adjoints are order embeddings.

\begin{proposition}\label{rem:gisanembedding}
Let $f:M\to N$ be an $\calE$-\edit{}, i.e., $f$ is a morphism $P\to Q$ in $\calE$ (with additional conditions). Consider the restriction $f' : P \to f(P)$ of the codomain of $f$. 
Then $f'$ is also an $\calE$-\edit{} $M\to N$, and its right adjoint is an order embedding (note: $\dist{f}=\dist{f'}$). 
\end{proposition}    

This proposition shows that every surjective edit corresponds to a generalized notion of a \emph{closure system} \cite[Definition 2]{erne1993primer}, as follows. A closure system of a poset $P$ is defined as a subset $Q$ of $P$ such that for each $p\in P$ there exists a smallest $q\in Q$ with $p\leq q$, i.e. $Q$ is the full subposet of $P$ such that the inclusion $Q\hookrightarrow P$ admits a left adjoint. A surjective edit fits naturally into this framework, as it is a left adjoint to an injection (not necessarily an inclusion).

\begin{proof}
   Let $g$ be the right adjoint of $f$, and let $\iota$ be the inclusion $f(P)\hookrightarrow Q$. %
   By \Cref{remark:galois basics}~\ref{rem:decomposition_of_Galois_connection}, we have two Galois connections:\[f':P\leftrightarrows f(P): g|_{f(P)} \ \ \ \ \mbox{ and }  \ \ \  \ \iota:f(P)\leftrightarrows Q:fg\]    
     Before proving that $f'$ is an $\calE$-edit, we show that the codomain $f(P)$ is actually in $\calE$. When $\calE=\Poset$, there is nothing to show. If $\calE=\grid$, $f$ is a grid morphism and therefore $f(P)$ is a grid. If $\calE=\jsLat$, as $f$ preserves all joins (\Cref{productofadjoints}~\ref{productofadjoints0}), $f(P)$ is closed under all binary joins; %
     indeed, let $a,b\in f(P)$, and choose $c\in f^{-1}(a)$ and $d\in f^{-1}(b)$ arbitrarily. Then the join $c\lor d$ exists in $P$, and its image $f(c\lor d)$ coincides with the join of $a$ and $b$ in $Q$. In particular, this join lies in $f(P)$. Therefore, every binary join in $f(P)$ agrees with that in $Q$, which in turn agrees with that in $\R^d$, as $Q$ is a \joingrid. 
     This shows that $f(P)$ is also   a \joingrid. %
    
    By \Cref{prop:trivial_edit},  $\iota$ is an $\calE$-\edit{} $N \to N$,
    which implies that $N$ is $f(P)$-presented. Also, since $M|_P \circ g\cong N|_{Q}$, we have that $M|_P \circ g|_{f(P)}\cong N|_{f(P)}$. Therefore, $f'$  is also an $\calE$-\edit{} $M\to N$ with the right adjoint $g|_{f(P)}$. 
    
    Next, we show that $g|_{f(P)}$ is an order embedding.     
    By \Cref{remark:galois basics}~\ref{remark:triangle_identity}, we have $f(g(f(p)))=f(p)$ for every $p\in P$. Hence $f'\circ g|_{f(P)}=\id_{f(P)}$. For any $q, q'\in f(P)$, we have:
    \[
    q\le q' \implies g|_{f(P)}(q)\le g|_{f(P)}(q') \implies f'(g|_{f(P)}(q))\le f'(g|_{f(P)}(q'))\implies q\le q'.
    \]
    Therefore, the right adjoint $g|_{f(P)}$ is  an order embedding. 
\end{proof}

\begin{example}[A $\mathsf{Grid}_2$-edit] \label{ex:edit_makes_simpler}
    For a 2-grid $P:=\{0,1,2\}\times \{0,1\}$ and its subgrid $Q:=\{0,2\}\times\{0,1\}$, let $M$ and $N$ be $P$- and $Q$-presented $\R^2$-persistence modules, respectively.
    Assuming their restrictions on $P$ and $Q$ respectively are as follows, $M$ and $N$ are depicted as in \Cref{fig:example_of_an_edit}.
    
\adjustbox{scale=1,center}
{\centering$$
\begin{tikzcd}[ampersand replacement=\&]
  {} \arrow[d, phantom, "M|_P:" description] \&
  \F \arrow[r,"1"] \&
  \F \arrow[r] \&
  0 \\
  {} \&
  \F \arrow[u,"1"]
    \arrow[r, "{\scalebox{0.75}{$\begin{pmatrix} 1\\ 0 \end{pmatrix}$}}"'] \&
  \F^2 \arrow[r,"(1\ 1)"] \arrow[u,"(1\ 0)"] \&
  \F \arrow[u]
\end{tikzcd}
\hspace{15mm}
\begin{tikzcd}[ampersand replacement=\&]
  {} \arrow[d, phantom, "N|_Q:" description] \&
  \F \arrow[r] \&
  0  \\
  {} \&
  \F \arrow[u,"1"]
    \arrow[r, "1"'] \&
  \F\arrow[u] 
\end{tikzcd}$$}
    The inclusion $g:Q\hookrightarrow P$ preserves all meets. Therefore, by \Cref{productofadjoints}~\ref{item:productofadjoints1}, $g$ admits a left adjoint $f$. %
    As $M|_P\circ g = N|_Q$, $f$ is a $\mathsf{Grid}_2$-\edit{} from $M$ to $N$. 
    
\end{example}

    The example demonstrates the fact that, given a $\carte$-\edit{} $M\to N$, every vector space and structure map present in $N$ is also present in $M$. This example also shows that there can be a $\carte$-\edit{} from a non-interval-decomposable module to an interval (decomposable) module. Since any $\carte$-\edit{} is also a $\Poset$-\edit{}, these two observations remain valid even for $\Poset$-\edit{}s. 

\begin{figure}
    \centering
\begin{tikzpicture}[scale=0.8]
\definecolor{Rcolor}{RGB}{41,182,43}
    \def\ponecolor{orange}
    \def\pthreecolor{green}
    \def\pfivecolor{magenta}

\begin{scope}[shift={(0,0)}]
\begin{scope}[scale= 2, shift={(0,0)}]%
    
    \fill[\persistcolor!40] (0,0) rectangle (1,1);
    \fill[\persistcolor!40] (0,1) rectangle (1,2.3);
    \fill[\persistcolor!60] (1,0) rectangle (2,1);
    \fill[\persistcolor!40] (2,0) rectangle (3.3,1);
    \fill[\persistcolor!40] (0,1) rectangle (1,2.3); 
    \fill[\persistcolor!40] (1,1) rectangle (2,2.3);
    \fill[\persistcolor!20] (2,1) rectangle (3.3,2.3);

    \node (b) at (1.5,0.5) {$\F^2$};
    \node (c) at (2.5,0.5) {$\F$};
    \node (d) at (0.5,1.5) {$\F$};
    \node (f) at (2.5,1.5) {$0$};

    \draw[->, thick] (0,-0.5) -- (0,2.5) node[above] {$y$};
    \draw[->, thick] (-0.7,0) -- (3.5,0) node[right] {$x$};

    \draw[-, thin, color=\persistcolor!40] (0,1) -- (1,1);
    \draw[-, thin, color=\persistcolor!40] (1,1) -- (1,2.3);
    \draw[-, thin] (1,1) -- (3.3,1);
    \draw[-, thin] (1,0) -- (1,1);
    \draw[-, thin] (2,0) -- (2,2.3);

    \foreach \x in {0,1,2} 
    \foreach \y in {0,1}
    \draw[color=black!80,fill=\ponecolor!40, thick] (\x,\y) circle (\pointsize) ;

    \node at (-0.5,1) {\textcolor{\ponecolor!80}{$P$}};
    \node at (1.5,-0.3) {$M$};
  \end{scope}

  \draw[->, thick] (3,-4) --node[anchor=east] {$f_1$} (3,-1.8);

  \begin{scope}[scale = 2, shift={(0,-5)}]%
    
    \fill[\persistcolor!40] (0,0) rectangle (1,2.3);
    \fill[\persistcolor!60] (1,0) rectangle (2,1);
    \fill[\persistcolor!60] (1,0) rectangle (3.3,0.3);

    \fill[\persistcolor!40] (0,0.3) rectangle (1,1);
    \fill[\persistcolor!40] (2,0.3) rectangle (3.3,0.7);

    \fill[\persistcolor!20] (2,0.7) rectangle (3.3,2.3);
    
    \fill[\persistcolor!40] (0,1) rectangle (2,2.3); 

    \node at (1.5,0.5) {$\F^2$};
    \node at (2.5,0.5) {$\F$};
    \node at (0.5,1.5) {$\F$};
    \node at (2.5,1.5) {$0$};

    \draw[->, thick] (0,-0.5) -- (0,2.5) node[above] {$y$};
    \draw[->, thick] (-0.7,0) -- (3.5,0) node[right] {$x$};
    
    \draw[-, thin] (1,1) -- (2,1);
    \draw[-, thin] (2,0.3) -- (3.3,0.3);
    \draw[-, thin] (2,0.7) -- (3.3,0.7);
    \draw[-, thin] (1,0) -- (1,1);
    \draw[-, thin] (2,0.3) -- (2,2.3);

    \foreach \x in {0,1,2}
    \foreach \y in {0,0.3,0.7,1}
    {
        \pgfmathsetmacro\valueone{ (\x != 2.3) && ((\y == 0.3) || (\y == 1)) ? 1 : 0 }

        \pgfmathsetmacro\valuetwo{ (\x != 2.3) && ((\y != 0.3)) ? 1 : 0 }

        \ifthenelse{\valueone=1 \AND\valuetwo =1}{
            \draw[color=black!80,fill=\ponecolor!40, thick, xshift=0.3mm, yshift=0.3mm] (\x,\y) circle (\pointsize);
            \draw[color=black!80,fill=\pthreecolor!40, thick, xshift=-0.3mm, yshift=-0.3mm] (\x,\y) circle (\pointsize);
        }
    
        \ifthenelse{\valueone=1 \AND \valuetwo =0}{
            \draw[color=black!80,fill=\ponecolor!40, thick] (\x,\y) circle (\pointsize);
        }{}

        \ifthenelse{\valuetwo=1 \AND \valueone = 0}{
            \draw[color=black!80,fill=\pthreecolor!40, thick] (\x,\y) circle (\pointsize);
        }{}

        \pgfmathparse{!\valueone && !\valuetwo}

        \ifthenelse{\pgfmathresult=1}{
            \draw[color=black!80,fill=\pointcolor!40, thick] (\x,\y) circle (\pointsize);
        }{}
    }

    \node at (-0.5,1) {\textcolor{blue!60}{$Q$}};
    
    \node at (1.5,-0.3) {$M_1$};
  \end{scope}

  \draw[->, thick] (3.5,-12) --node[anchor=north east] {$f_2$} (5.5,-14);

  \begin{scope}[scale = 2,shift={(2.5,-10)}]%
    
    \fill[\persistcolor!40] (0,0) rectangle (1,2.3);
    \fill[\persistcolor!60] (1,0) rectangle (3.3,0.5);
    \fill[\persistcolor!60] (1,0) rectangle (2,1);
    \fill[\persistcolor!20] (2,0.5) rectangle (3.3,2.3);
    
    \fill[\persistcolor!40] (0,1) rectangle (1,2.3); 
    \fill[\persistcolor!40] (1,1) rectangle (2,2.3);

    \node (b) at (1.5,0.5) {$\F^2$};
    \node (d) at (0.5,1.5) {$\F$};
    \node (f) at (2.5,1.5) {$0$};

    \draw[->, thick] (0,-0.5) -- (0,2.5) node[above] {$y$};
    \draw[->, thick] (-0.7,0) -- (3.5,0) node[right] {$x$};

    \draw[-, thin, color=\persistcolor!40] (0,1) -- (1,1);
    \draw[-, thin, color=\persistcolor!40] (1,1) -- (1,2.3);
    \draw[-, thin] (1,1) -- (2,1);
    \draw[-, thin] (1,0) -- (1,1);
    \draw[-, thin] (2,0.5) -- (2,2.3);
    \draw[-, thin] (2,0.5) -- (3.3,0.5);

    \foreach \x in {0,1,2} 
    \foreach \y in {0,0.5,1}
    \draw[color=black!80,fill=\pthreecolor!40, thick] (\x,\y) circle (\pointsize) ;

    \node at (-0.5,1) {\textcolor{Rcolor!90}{$R$}};

    \node at (1.5,-0.3) {$M_2$};
  \end{scope}
\end{scope}

\begin{scope}[shift={(10,0)}]

  \draw[->, thick] (3,-4) --node[anchor=west] {$f_4$} (3,-1.7) ;
    
  \begin{scope}[scale = 2, shift={(0,-5)}]
    
    \fill[\persistcolor!40] (0,0) rectangle (1,2.3);
    \fill[\persistcolor!60] (1,0) rectangle (2,1);
    \fill[\persistcolor!60] (1,0) rectangle (3.3,0.3);

    \fill[\persistcolor!40] (0,0.3) rectangle (1,1);
    \fill[\persistcolor!40] (2,0.3) rectangle (3.3,0.7);

    \fill[\persistcolor!20] (2,0.7) rectangle (3.3,2.3);
    
    \fill[\persistcolor!40] (0,1) rectangle (2,2.3); 

    \node at (1.5,0.5) {$\F^2$};
    \node at (2.5,0.5) {$\F$};
    \node at (0.5,1.5) {$\F$};
    \node at (2.5,1.5) {$0$};

    \draw[->, thick] (0,-0.5) -- (0,2.5) node[above] {$y$};
    \draw[->, thick] (-0.7,0) -- (3.5,0) node[right] {$x$};
    
    \draw[-, thin] (1,1) -- (2,1);
    \draw[-, thin] (2,0.3) -- (3.3,0.3);
    \draw[-, thin] (2,0.7) -- (3.3,0.7);
    \draw[-, thin] (1,0) -- (1,1);
    \draw[-, thin] (2,0.3) -- (2,2.3);

    \foreach \x in {0,1,2}
    \foreach \y in {0,0.3,0.7,1}
    {
        \pgfmathsetmacro\valueone{ (\x != 2.3) && ((\y == 0.3) || (\y == 1)) ? 1 : 0 }

        \pgfmathsetmacro\valuetwo{ (\x != 2.3)) && ((\y != 0.3)) ? 1 : 0 }

        \ifthenelse{\valueone=1 \AND\valuetwo =1}{
            \draw[color=black!80,fill=\pfivecolor!40, thick, xshift=0.3mm, yshift=0.3mm] (\x,\y) circle (\pointsize);
            \draw[color=black!80,fill=\pthreecolor!40, thick, xshift=-0.3mm, yshift=-0.3mm] (\x,\y) circle (\pointsize);
        }
    
        \ifthenelse{\valueone=1 \AND \valuetwo =0}{
            \draw[color=black!80,fill=\pfivecolor!40, thick] (\x,\y) circle (\pointsize);
        }{}

        \ifthenelse{\valuetwo=1 \AND \valueone = 0}{
            \draw[color=black!80,fill=\pthreecolor!40, thick] (\x,\y) circle (\pointsize);
        }{}

        \pgfmathparse{!\valueone && !\valuetwo}

        \ifthenelse{\pgfmathresult=1}{
            \draw[color=black!80,fill=\pointcolor!40, thick] (\x,\y) circle (\pointsize);
        }{}
    }

    \node at (-0.5,1) {\textcolor{blue!60}{$Q$}};
    
    \node at (1.5,-0.3) {$M_3$};
  \end{scope}
  \draw[->, thick] (2.5,-12) --node[anchor=north west] {$f_3$} (0.5,-14);

  \begin{scope}[scale= 2, shift={(0,0)}]
    
    \fill[\persistcolor!40] (0,0) rectangle (1,1);
    \fill[\persistcolor!40] (0,1) rectangle (1,2.3);
    \fill[\persistcolor!60] (1,0) rectangle (2,1);
    \fill[\persistcolor!40] (2,0) rectangle (3.3,1);
    \fill[\persistcolor!40] (0,1) rectangle (1,2.3); 
    \fill[\persistcolor!40] (1,1) rectangle (2,2.3);
    \fill[\persistcolor!20] (2,1) rectangle (3.3,2.3);

    \node (b) at (1.5,0.5) {$\F^2$};
    \node (c) at (2.5,0.5) {$\F$};
    \node (d) at (0.5,1.5) {$\F$};
    \node (f) at (2.5,1.5) {$0$};

    \draw[->, thick] (0,-0.5) -- (0,2.5) node[above] {$y$};
    \draw[->, thick] (-0.7,0) -- (3.5,0) node[right] {$x$};

    \draw[-, thin, color=\persistcolor!40] (0,1) -- (1,1);
    \draw[-, thin, color=\persistcolor!40] (1,1) -- (1,2.3);
    \draw[-, thin] (1,1) -- (3.3,1);
    \draw[-, thin] (1,0) -- (1,1);
    \draw[-, thin] (2,0) -- (2,2.3);

    \foreach \x in {0,1,2} 
    \foreach \y in {0,1}
    \draw[color=black!80,fill=\pfivecolor!40, thick] (\x,\y) circle (\pointsize) ;

    \node at (-0.5,1) {\textcolor{\pfivecolor!60}{$P$}};
    
    \node at (1.5,-0.3) {$M_4$};
  \end{scope}
\end{scope}
\end{tikzpicture}

\caption{A $\carte$-edit sequence between $\R^2$-persistence modules $M$ and $M_4$ from \Cref{ex:example_of_an_edit_sequence}. The right adjoints $g_i$, $i=1,2,3,4$ are order embeddings, and their images are indicated using colors.
}\label{fig:example_of_an_edit_sequence}
\end{figure}

\begin{example}\label{ex:example_of_an_edit_sequence}
   For $M$ given in \Cref{ex:edit_makes_simpler}, %
    we will construct a $\mathsf{Grid}_2$-edit sequence  \begin{equation}\label{eq:4-step edit}
        M\xleftarrow{f_1} M_1 \xrightarrow{f_2} M_2 \xleftarrow{f_3} M_3 \xrightarrow{f_4} M_4 \ \ \mbox{(see Figure~\ref{fig:example_of_an_edit_sequence})},
    \end{equation}
    whose cost attains the value of $\dedi^{\mathsf{Grid}_2}(M,M_4)$. Consider the $2$-grids
    \[P=\{0,1,2\}\times \{0,1\}, \ \ \ \ Q=\{0,1,2\}\times \{0,0.3,0.7,1\}, \ \ \ \  R=\{0,1,2\}\times\{0,0.5,1\}.\]
    $M$ and $M_4$ are $P$-presented, $M_1$ and $M_3$ are $Q$-presented, and $M_2$ is $R$-presented, and their restrictions to $P$, $Q$, and $R$, respectively, are given below.

{\[\begin{array}{cc}
\begin{tikzcd}[scale cd =\sizeone,ampersand replacement=\&,
               row sep=\sizetwo em, column sep=\sizethree em]
  {} \arrow[ddd, phantom, "M_1|_{Q}:" description] \&  \F \arrow[r,"1"] \&
  \F \arrow[r] \&  0 \\
  {} \&
  \F \arrow[u,"1"]  \arrow[r, "{\scalebox{0.70}{$\begin{pmatrix}
      1\\0
  \end{pmatrix}$}}"'] \&  \F^2 \arrow[r] \arrow[u,"(1\ 0)"'] \&
  0 \arrow[u] \\
  {} \&
  \F \arrow[u,"1"]  \arrow[r, "{\scalebox{0.70}{$\begin{pmatrix}
      1\\0
  \end{pmatrix}$}}"'] \&  \F^2 \arrow[r,"(1\ 1)"'] \arrow[u,"{\scalebox{0.95}{$\id_{\F^2}$}}"'] \&
  \F \arrow[u] \\
  {} \&
  \F \arrow[u,"1"]  \arrow[r, "{\scalebox{0.70}{$\begin{pmatrix}
      1\\0
  \end{pmatrix}$}}"'] \&  \F^2 \arrow[r,"{\scalebox{0.95}{$\id_{\F^2}$}}"'] \arrow[u,"{\scalebox{0.95}{$\id_{\F^2}$}}"'] \&
  \F^2 \arrow[u,"(1\ 1)"']
\end{tikzcd}&

\begin{tikzcd}[scale cd =\sizeone,ampersand replacement=\&,
               row sep=\sizetwo em, column sep=\sizethree em]
  {} \arrow[dd, phantom, "M_2|_{R}:" description] \&
  \F \arrow[r,"1"] \&
  \F \arrow[r] \&
  0 \\
  {} \&
  \F \arrow[u,"1"]
    \arrow[r, "{\scalebox{0.70}{$\begin{pmatrix} 1\\0 \end{pmatrix}$}}"'] \&
  \F^2 \arrow[r] \arrow[u,"(1\ 0)"'] \&
  0 \arrow[u]\\
  {} \&
  \F \arrow[u,"1"]
    \arrow[r, "{\scalebox{0.70}{$\begin{pmatrix} 1\\0 \end{pmatrix}$}}"'] \&
  \F^2 \arrow[r,"{\scalebox{0.95}{$\id_{\F^2}$}}"'] \arrow[u,"{\scalebox{0.95}{$\id_{\F^2}$}}"'] \&
  \F^2 \arrow[u]
\end{tikzcd}
\\[7em]
\begin{tikzcd}[scale cd =\sizeone,ampersand replacement=\&,
               row sep=\sizetwo em, column sep=\sizethree em]
  {} \arrow[ddd, phantom, "M_3|_{Q}:" description] \&  \F \arrow[r,"1"] \&
  \F \arrow[r] \&  0 \\
  {} \&
  \F \arrow[u,"1"]  \arrow[r, "{\scalebox{0.7}{$\begin{pmatrix}
      1\\0
  \end{pmatrix}$}}"'] \&  \F^2 \arrow[r] \arrow[u,"(1\ 0)"'] \&
  0 \arrow[u] \\
  {} \&
  \F \arrow[u,"1"]  \arrow[r, "{\scalebox{0.7}{$\begin{pmatrix}
      1\\0
  \end{pmatrix}$}}"'] \&  \F^2 \arrow[r,"(0\ 1)"'] \arrow[u,"{\scalebox{0.95}{$\id_{\F^2}$}}"'] \&
  \F \arrow[u] \\
  {} \&
  \F \arrow[u,"1"]  \arrow[r, "{\scalebox{0.7}{$\begin{pmatrix}
      1\\0
  \end{pmatrix}$}}"'] \&  \F^2 \arrow[r,"{\scalebox{0.95}{$\id_{\F^2}$}}"'] \arrow[u,"{\scalebox{0.95}{$\id_{\F^2}$}}"'] \&
  \F^2 \arrow[u,"(0\ 1)"']
\end{tikzcd}
&
\begin{tikzcd}[scale cd =\sizeone,ampersand replacement=\&,
               row sep=\sizetwo em, column sep=\sizethree em]
  {} \arrow[d, phantom, "M_4|_{P}:" description] \&
  \F \arrow[r,"1"] \&
  \F \arrow[r] \&
  0 \\
  {} \&
  \F \arrow[u,"1"]
    \arrow[r, "{\scalebox{0.7}{$\begin{pmatrix} 1\\ 0 \end{pmatrix}$}}"'] \&
  \F^2 \arrow[r,"(0\ 1)"'] \arrow[u,"(1\ 0)"'] \&
  \F \arrow[u]
\end{tikzcd}
\end{array}\]}
  Let $g_1,g_4:P\hookrightarrow Q$ and $g_2,g_3:R\hookrightarrow Q$ be the grid morphisms given by
        \begin{align*}
           g_1=g_4= \id_{\{0,1,2\}}\times \begin{pmatrix}
                0\mapsto 0.3\\
                1\mapsto \phantom{.}1\phantom{3}
            \end{pmatrix} \  \ \ \  \mbox{and}\ \ \ \ \ 
           g_2=g_3= \id_{\{0,1,2\}}\times \begin{pmatrix}
                \phantom{5}0\phantom{.}\mapsto 0\\
                0.5\mapsto0.7 \\
                \phantom{5}1\phantom{.}\mapsto \phantom{.}1\phantom{3}
            \end{pmatrix}.
        \end{align*}
   For each $i=1,2,3,4$, let $f_i$ be the left adjoint of $g_i$, which appears in the sequence given in \eqref{eq:4-step edit}. %
   Then the left adjoints $f_i$ are those described by \Cref{prop:existence of adjoint}~\ref{item:existence of adjoint2}.
    The cost of this edit sequence is $\dist{f_1}+\dist{f_2}+\dist{f_3}+\dist{f_4}=0.3+0.2+0.3+0.2=1$. In fact, the constructed $\mathsf{Grid}_2$-sequence attains the value of $\dedi^{\mathsf{Grid}_2}(M,M_4)$: 
This follows from the fact that $\dint(M,M_4)=1$ (which is not difficult to show) and from \Cref{thm:dedi=dint}.
\end{example}

\section{Interleaving distance as a Galois-edit distance}\label{sec:interleaving as edit}
 The main goal of this section is to establish: 

\begin{theorem}[Interleaving distance as a Galois-edit distance]\label{thm:dedi=dint} 
For any finitely presented $\R^d$-persistence modules $M$ and $N$, we have   \(\dint(M,N)=\dedi^\calE(M,N).\) %
\end{theorem}

In \Cref{appendix:comparison with gulen's work}, we compare \Cref{thm:dedi=dint} with related ideas
that have appeared in the literature. Now, we discuss some implications of \Cref{thm:dedi=dint}.
Since $\dint$ is an extended metric (not merely a pseudometric) on finitely presented $\R^d$-persistence modules \cite[Corollary 6.2]{lesnick2015theory}, 
by \Cref{thm:dedi=dint}, so is $\dedi^\calE$. We also remark that \Cref{thm:dedi=dint} and the main theorem of \cite{bjerkevik2020computing} imply that $\dedi^\calE$ is NP-hard to compute.  

Given any categories $\calC$ and $\calD$, we call a map $\mathfrak{f}:\ob(\calC)\to\ob(\calD)$ an \textbf{invariant} if,  for any $M,N\in \ob(\calC)$, an isomorphism $M\cong N$ implies an isomorphism $\mathfrak{f}(M)\cong \mathfrak{f}(N)$. 

\begin{corollary}\label{cor:one stability implies another} %
Let $\mathfrak{f}$ be any invariant of %
finitely presented $\R^d$-persistence modules, and let $\sfd$ be a metric on the image of $\mathfrak{f}$.  The invariant $\mathfrak{f}$ is stable (i.e., continuous) for the topologies induced by $\sfd$ and $\dint$ if and only if  $\mathfrak{f}$ is stable for the topologies induced by $\sfd$ and $\dedi^\calE$. 
\end{corollary}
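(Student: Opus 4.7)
The plan is to recognize \Cref{cor:one stability implies another} as an immediate consequence of \Cref{thm:dedi=dint}. The key observation is that when two extended pseudometrics on a common set are literally equal as functions, they induce the very same topology, and hence continuity of any map with respect to one is equivalent to continuity with respect to the other. All of the substantive work is absorbed into \Cref{thm:dedi=dint}; what remains is a short topological unpacking.

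First I would use \Cref{rem:Kan_extension_by_floor_function}\ref{item: constructibility is finitely presentedness} to identify the class of finitely presented $\R^d$-persistence modules with the class of constructible ones, i.e.\ with $\ob(\Rep{\calE})$. This makes it legitimate to view $\mathfrak{f}$ as a function out of the metric space $(\ob(\Rep{\calE}), \dedi^\calE)$ and also out of $(\ob(\Rep{\calE}), \dint)$, for either choice $\calE \in \{\jsLat, \carte\}$.

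Next I would apply \Cref{thm:dedi=dint} to obtain $\dint(M,N)=\dedi^\calE(M,N)$ for every pair $M,N \in \ob(\Rep{\calE})$. Because these two extended pseudometrics coincide as functions, they generate the same open balls and therefore the same topology on $\ob(\Rep{\calE})$. Consequently, $\mathfrak{f}$ is continuous with respect to the topology induced by $\dint$ (paired with $\sfd$ on the image) if and only if it is continuous with respect to the topology induced by $\dedi^\calE$ (paired with $\sfd$), which is exactly the claimed equivalence.

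The main obstacle is not in the proof of the corollary itself but in \Cref{thm:dedi=dint}, the isometry whose equality drives the argument. It is worth observing that the argument upgrades automatically to quantitative stability: any $c$-Lipschitz bound of $\mathfrak{f}$ with respect to one of the two metrics transfers verbatim to the other, again because the two metrics agree pointwise on $\ob(\Rep{\calE})$.
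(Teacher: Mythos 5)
Your proposal is correct and matches the paper's (essentially implicit) reasoning: the corollary is treated as an immediate consequence of \Cref{thm:dedi=dint}, since equal (extended pseudo)metrics induce identical topologies, with the finitely-presented/constructible identification supplied by \Cref{prop:constructible=fp}. Nothing further is needed.
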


The following proposition is useful  for establishing \Cref{thm:dedi=dint},  for strengthening \Cref{cor:one stability implies another}, and for  an alternative proof of the well-known interleaving-bottleneck stability theorem (cf.~\Cref{thm:bottleneck stability}). 

It says that any distance $\sfd$ that is \textbf{1-Lipschitz under a single $\calE$-edit} (i.e., if there is an $\calE$-\edit{} $f$ from $M$ to $N$, then $\sfd(M,N)\le \dist{f}$) does not exceed the edit distance. 

\begin{proposition}[Maximality of the edit distance]\label{prop:edit_is_universal}
    Let $\sfd$ be any pseudometric on finitely presented $\R^d$-persistence modules. Suppose $\sfd$ is 1-Lipschitz under every single $\calE$-edit. Then, for any finitely presented $\R^d$-persistence modules $M$ and $N$, we have that $\sfd(M,N)\le \dedi^\calE(M,N)$. 
\end{proposition}

We remark that the preceding proposition is an instance of the maximality of edit distances in general. Namely, let $\mathcal{O}$ be a collection of objects and consider a set of allowed edits between them. If $d$ is any metric on $\mathcal{O}$ such that $d(o_1, o_2)$ is at most the cost of any single edit $o_1 \to o_2$, then $d$ is bounded above by the induced edit distance. The proof presented below is straightforwardly extendable to this general setting, as can be found, for example, in \cite[p.~10]{carlsson2010characterization}.

\begin{proof}[Proof of \Cref{prop:edit_is_universal}]
If $\dedi^\calE(M,N)=\infty$, then there is nothing to prove. Suppose that $\dedi^\calE(M,N)<\infty$. Then, for any $\ep>0$, there exists a sequence of $\R^d$-persistence modules $M=M_0,\ldots, M_n=N$ for some $n$ and $\calE$-\edit{}s $f_i:M_i\leftrightarrow M_{i+1}$ such that $\sum_{i=0}^{n-1}\dist{f_i}<\dedi^\calE(M,N)+\ep$. Then, we have:
    \begin{align*}
        d(M,N)&\le \sum_{i=0}^{n-1} d(M_i,M_{i+1})&&\mbox{by the triangle inequality}\\
        &\le \sum_{i=0}^{n-1} \dist{f_i}&&\mbox{by assumption} 
        \\
        &<\dedi^\calE(M,N)+\ep.&&
    \end{align*}
    Since $\ep>0$ was arbitrary, we obtain the desired inequality.
\end{proof}

A consequence of the preceding proposition is that, for any metric $\sfd$ on the image of an invariant $\mathfrak{f}$ of finitely presented $\R^d$-persistence modules, the 1-Lipschitzness of $\mathfrak{f}$ with respect to $\sfd$ and $\dint$ is equivalent to its 1-Lipschitzness with respect to $\sfd$ and a single $\calE$-edit:

\begin{corollary}
\label{cor:single-edit is enough}Let $\mathfrak{f}$ be any invariant of finitely presented $\R^d$-persistence modules, and let $\sfd$ be a metric on the image of $\mathfrak{f}$. 
Then, $d\circ(\mathfrak{f}\times\mathfrak{f})\le \dint$ if and only if  $d\circ(\mathfrak{f}\times\mathfrak{f})$ is 1-Lipschitz under every single $\calE$-edit.
\end{corollary}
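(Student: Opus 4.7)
The plan is to deduce this corollary as a formal consequence of the isometry $\dint=\dedi^\calE$ (\Cref{thm:dedi=dint}) combined with the universality result (\Cref{prop:edit_is_universal}). Set $\sfd':=\sfd\circ(\mathfrak{f}\times\mathfrak{f})$. First I would verify that $\sfd'$ is a pseudometric on the objects of $\Rep{\calE}$: symmetry, non-negativity, reflexivity on identical inputs, and the triangle inequality all transfer directly from $\sfd$ on the image of $\mathfrak{f}$. Because $\mathfrak{f}$ is an invariant, isomorphic modules get sent to equal points of $\mathrm{image}(\mathfrak{f})$, so $\sfd'$ is also well-defined up to isomorphism.

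For the ``only if'' direction, assume $\sfd'\le \dint$. Given any single $\calE$-edit, i.e. a morphism $f$ from $M$ to $N$ (or from $N$ to $M$) in $\Rep{\calE}$, the length-one sequence $M,N$ equipped with $f$ is itself an $\calE$-path in the sense of the paragraph preceding \Cref{def:edit distance}, so from the definition of $\dedi^\calE$ one immediately gets $\dedi^\calE(M,N)\le \dist{f}$. Combining this with $\dint=\dedi^\calE$ (\Cref{thm:dedi=dint}) and the hypothesis $\sfd'\le \dint$ yields
\[
\sfd'(M,N)\;\le\;\dint(M,N)\;=\;\dedi^\calE(M,N)\;\le\;\dist{f},
\]
which is precisely single-$\calE$-edit stability of $\sfd'$.

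For the ``if'' direction, assume $\sfd'$ is stable under a single $\calE$-edit. Since $\sfd'$ is a pseudometric on constructible $\R^d$-persistence modules, \Cref{prop:edit_is_universal} applies and gives $\sfd'(M,N)\le \dedi^\calE(M,N)$ for every pair of constructible modules $M,N$. A second invocation of \Cref{thm:dedi=dint} then replaces $\dedi^\calE$ with $\dint$, producing $\sfd'\le \dint$, as desired.

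There is no genuine obstacle here: the statement is essentially the contrapositive packaging of \Cref{thm:dedi=dint} and \Cref{prop:edit_is_universal}, and the only substantive step is noticing that $\sfd\circ(\mathfrak{f}\times\mathfrak{f})$ inherits the pseudometric axioms from $\sfd$ so that \Cref{prop:edit_is_universal} is applicable. The ``only if'' direction is a one-line reduction to the observation that a single edit is itself a valid edit path.
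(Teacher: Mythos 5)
Your proposal is correct and follows essentially the route the paper intends: the corollary is presented as a direct consequence of \Cref{prop:edit_is_universal} together with the isometry $\dint=\dedi^\calE$ of \Cref{thm:dedi=dint}, with the ``only if'' direction being the trivial observation that a single edit is an edit path so $\dedi^\calE(M,N)\le\dist{f}$. Your added check that $\sfd\circ(\mathfrak{f}\times\mathfrak{f})$ is a pseudometric (so that \Cref{prop:edit_is_universal} applies) is exactly the implicit step the paper leaves unstated, so there is nothing to add.
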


Now, we outline the proof of \Cref{thm:dedi=dint}. The key ingredient is \Cref{thm:Galois_edit_is_presentation_edit}, 
 which states that a Galois-edit between persistence modules corresponds, in a suitable sense, to a notion of Galois-edit between their presentations.
This theorem reduces the problem of proving the inequality $\dint \le \dedi^{\Poset}$ to that of showing that, if there is a Galois-edit of cost $\ep\geq 0$ between two presentations, then the corresponding $\R^d$-persistence modules are $\ep$-interleaved; this is what we establish.
Next, we prove the inequality~$\dedi^\carte \le \dint$. For this, we invoke \Cref{lesnickpresentation}, which ensures that two $\ep$-interleaved persistence modules admit presentations on common symbol sets and consequently let us construct a Galois-edit sequence between their presentations. We then again employ the correspondence from \Cref{thm:Galois_edit_is_presentation_edit} to obtain a $\grid$-edit sequence of cost at most $\ep$.  %
In sum, since $\dedi^\Poset\leq \dedi^\carte$ (\Cref{relationbetweenedits}~\ref{item:relationbetweenedits1}), we have that 
 $\dint\le \dedi^\Poset \le \dedi^{\carte}\le \dint,$
 completing the proof of \Cref{thm:dedi=dint}.

Before \Cref{thm:Galois_edit_is_presentation_edit}, we remark that, given a presentation of an $\R^d$-persistence module $M$,  its homogeneous parts $M_p$ and structure maps $M_{p\leq q}$ can be described in terms of the presentation: For a graded set $W$ and a set $A\subset \R^d$, we write $W^A$ to denote $\{w\in W: \gr_W(w)\in A\}$. %
\begin{remark} \label{rem:vector space of modules} %
   Let $\left<W|Y\right>$ be a presentation of an $\R^d$-persistence module $M$, and $p\in \R^d$.  %
   Then,   $Y^{p\bindownarrow}$ is a homogeneous subset of $\fr[W^{p\bindownarrow}]$ and
   \[M_p\cong \left<W|Y\right>_p \cong\langle W^{p\bindownarrow}|Y^{p\bindownarrow}\rangle_p \cong \left<\sym(W^{p\bindownarrow})|\sym(Y^{p\bindownarrow})\right>_\F.\]   
    Now, let $p\le q$ in $\R^d$. Since $\sym(W^{p\bindownarrow})\subseteq \sym(W^{q\bindownarrow})$ and $\sym(Y^{p\bindownarrow})\subseteq \sym(Y^{q\bindownarrow}) \subseteq \fr_\F[W^{q\bindownarrow}]$, we obtain the canonical maps %
    \[M_p\cong \left<\sym(W^{p\bindownarrow}) |\sym(Y^{p\bindownarrow})\right>_\F{\hookrightarrow} \left<\sym(W^{q\bindownarrow})|\sym(Y^{p\bindownarrow})\right>_\F {\twoheadrightarrow} \left<\sym(W^{q\bindownarrow})|\sym(Y^{q\bindownarrow})\right>_\F\cong M_q.\] 
    The composition of these maps is $M_{p\le q}$ (equivalently, the left multiplication by $x^{q-p}$).
\end{remark}

\begin{theorem}[Galois-edits between persistence modules correspond to Galois-edits between their presentations]\label{thm:Galois_edit_is_presentation_edit}
    Let 
    $f:P\to Q$ be a morphism in $\calE$, 
    $M$ and $N$ be $P$-,$Q$-presented $\R^d$-persistence modules. 
    Let $\langle W|Y\rangle$ be a presentation of $M$ over $P$.
    Then, the following are equivalent:
    \begin{itemize}
        \item $f$ is an $\calE$-edit from $M$ to $N$,
        \item $N\cong  \langle f(W)|f(Y)\rangle$. %
    \end{itemize}
\end{theorem}

For a poset $Q\subset \R^d$ and $a\in \R^d$, let $Q_{\le a}:= Q \cap a\bindownarrow \;=\; \{\, q \in Q : q \le a \,\}$.

\begin{proof}

    We fix $a\in \R^d$ throughout the proof. Let $f$ be an $\calE$-edit from $M$ to $N$ with a right adjoint  $g:Q\to P$, and let $g(Q_{\leq a})$ be the image of $Q_{\leq a}$ under $g$.
    Consider \[g(Q_{\le a})\bindownarrow:=\{p\in P: \mbox{there exists b $\in g(Q_{\le a})$ such that $p\leq b$}\}.\] Let $\bigvee  Q_{\le a}$ be the join of $Q_{\le a}$ in $\R^d$. Then, we have:
\begin{equation}\label{eq:sequence of iso}
    \hspace{-2cm}\begin{aligned}
    N_a&\cong \colim\, N|_{Q_{\le a}} &&\mbox{since $N$ is $Q$-presented (cf. the paragraph before \Cref{KanExtensionOfPersistenceModules})}
    \\& \stackrel{\mathclap{\normalfont\mbox{$(\ast)$}}}{\cong} \colim \left((M|_P)\circ (g|_{Q_{\le a}})\right) &&\mbox{since $M|_P\circ g\cong N|_Q$}
    \\&
    \cong\colim\, M|_{g(Q_{\le a})\bindownarrow} && \mbox{%
    \textcolor{darkgreen}{(i)}}
    \\&\cong \langle \sym(W^{g(Q_{\le a})\bindownarrow})|\sym(Y^{g(Q_{\le a})\bindownarrow})\rangle_\F && \mbox{%
    \textcolor{darkgreen}{(ii)}}
    \\&\cong\langle \sym(f(W)^{ Q_{\le a}})|\sym(f(Y)^{Q_{\le a}})\rangle_\F && \mbox{\textcolor{darkgreen}{(iii)}}
    \\&\cong\langle f(W)|f(Y)\rangle_{\bigvee Q_{\le a}}  && \mbox{\textcolor{darkgreen}{(iv)}} %
    \\&=\langle f(W)|f(Y)\rangle _a.
    \end{aligned}\hspace{-2cm}
    \end{equation}
\begin{enumerate}[leftmargin=*,label=\textcolor{darkgreen}{(\roman*)}]
    \item 
       By \Cref{lem:final_functor_preserves_colimit}, it suffices to prove that the monotone function $g|_{Q\le a}:Q_{\le a}\to g(Q_{\le a})\bindownarrow$  
    is final. Let $p\in g\left(Q_{\le a}\right)\bindownarrow$. We wish to show that the poset $(Q_{\le a})_{g\geq p}=\{q\in Q_{\le a}: g(q) \ge p\}$ %
    is connected. Suppose $x\in (Q_{\le a})_{g\geq p}$, i.e., $p\le g(x)$ and $x\le a$.  By \Cref{productofadjoints}~\ref{item:fg is deflating and gf is inflating}, we have $f\circ g \le \id_Q$ and thus
    \[f(p) \le f(g(x))\le x \le a.\] By the same remark, we have $g(f(p))\ge p$, and thus $f(p)\in (Q_{\le a})_{g\geq p}$.
    This shows that every element $x$ of the poset $(Q_{\le a})_{g\geq p}$ is comparable with $f(p)\in (Q_{\le a})_{g\geq p}$, and thus  $(Q_{\le a})_{g\geq p}$ is connected.
    \item 
   
     Consider the poset category $\calC=g(Q_{\le a})\bindownarrow$ and the category $\calD=\left\{0\displaystyle\mathrel{\mathop{\rightrightarrows}^{f}_{g}}\textstyle 1\right\}$ of two objects and two non-identity arrows. We will define a functor $F:\calC\times \calD \to \vect$. %
     For any $p\le p'\in \ob(\calC)$, let $F$ be defined by
     \begin{align*}
         (p,0) &\mapsto \fr_\F[\sym(Y^{p\bindownarrow})] \\
         (p,1) & \mapsto \fr_\F[\sym(W^{p\bindownarrow})]\\
         (p\le p', 0) & \mapsto\mbox{the inclusion } \fr_\F[\sym(Y^{p\bindownarrow})]\hookrightarrow \fr_\F[\sym(Y^{p'\bindownarrow})] \\
         (p\le p', 1) & \mapsto\mbox{the inclusion } \fr_\F[\sym(W^{p\bindownarrow})]\hookrightarrow \fr_\F[\sym(W^{p'\bindownarrow})] \\
         (p,f) & \mapsto  \mbox{the linear map induced by the map } Y^{p\bindownarrow} \to \fr_\F[\sym(W^{p\bindownarrow})], \\& \hspace{5cm} \left(\sum_i c_iw_i,b\right)\mapsto \sum_i c_iw_i, \\
         & \hspace{4cm}\text{ where $c_i\in \F\setminus \{0\}$, $w_i\in W^{p\bindownarrow}$, and $b\in p\bindownarrow$}\\
         (p,g) & \mapsto 0 \mbox{ (the zero map)}.
     \end{align*}
   Indeed, this specifies the value of $F$ for each object and non-identity morphism, and the values of $F$ for identity morphisms are trivially defined.
   Then, for any pair $(r:p\le p', s:d\to d')$ of morphisms in $\calC$ and $\calD,$ the equation $F(p',s)\circ F(r,d)=F(r,d')\circ F(p,s)$ holds. Therefore, we have defined a functor $F:\calC\times \calD\to \vect$ (cf. \cite[II.3 Proposition 1]{mac2013categories}).

    For each $p\in \ob(\calC)$, the colimit of the diagram $F|_{\{p\}\times \calD}:\{p\}\times \calD \to \vect$ is the presentation $\langle \sym(W^{p\bindownarrow}) |\sym( Y^{p\bindownarrow})\rangle_\F\cong M_p$, and %
    for any $p\le p' \in \ob(\calC)$, the map \[\colim\, F|_{\{p\}\times \calD}=\langle \sym(W^{p\bindownarrow}) | \sym(Y^{p\bindownarrow})\rangle_\F\to \langle \sym(W^{p'\bindownarrow}) | \sym(Y^{p'\bindownarrow})\rangle_\F=\colim\, F|_{\{p'\}\times \calD}\] is induced by the inclusions $\fr_\F[\sym(W^{p\bindownarrow})]\hookrightarrow \fr_\F[\sym(W^{p'\bindownarrow})]$ and $\fr_\F[\sym(Y^{p\bindownarrow})]\hookrightarrow \fr_\F[\sym(Y^{p'\bindownarrow})]$%
    . By \Cref{rem:vector space of modules}, $\colim_d F(-,d):\calC\to \vect$ is naturally isomorphic to $M|_{g(Q_{\leq a})\bindownarrow}$.
    
    For each $d\in \ob(\calD)$, the colimit of the diagram $F|_{\calC\times \{d\}}:\calC\times \{d\}\to \vect$ is
    \[
\begin{cases}
\fr_\F\!\left[\sym\!\left(Y^{g(Q_{\le a})\bindownarrow}\right)\right], & \text{if } d = 0, \\[4pt]
\fr_\F\!\left[\sym\!\left(W^{g(Q_{\le a})\bindownarrow}\right)\right], & \text{if } d = 1.
\end{cases}
\]
    The functoriality of the correspondence $d\mapsto \colim\, F|_{\calC\times\{d\}}$ gives two morphisms from $\fr_\F[\sym(Y^{g(Q_{\le a})\bindownarrow})]$ to $\fr_\F[\sym(W^{g(Q_{\le a})\bindownarrow})]$, which are $\colim_c F(c,f)$ and $\colim_c F(c,g)$. Namely, $\colim_c F(c,f)$ is the natural map induced by the map \begin{align*}
        Y^{g(Q_{\le a})\bindownarrow}\to \fr_\F[\sym(W^{g(Q_{\le a})\bindownarrow})], \quad \quad & \text{given by }\left(\sum_i c_iw_i,b\right)\mapsto \sum_i c_iw_i,\\
        & \text{where }c_i\in \F, w_i\in W^{g(Q_{\le a})\bindownarrow},\text{ and }b\in g(Q_{\le a}).
    \end{align*} and $\colim_c F(c,g)$ is the zero map. Therefore, the colimit of the diagram $\colim_c F(c,-):\calD \to \vect$ is the cokernel of $\colim_c F(c,f)$, %
    which is the presentation $\langle \sym(W^{g(Q_{\le a})\bindownarrow} )| \sym(Y^{g(Q_{\le a})\bindownarrow})\rangle_\F$.\\
    By \Cref{lem:colimt_commutes_with_colimit}, the colimit of the diagram $\colim_d F(-,d) \cong M|_{g(Q_{\leq a})\bindownarrow}:\calC \to \vect$ 
     is \[\langle \sym(W^{g(Q_{\le a})\bindownarrow} )| \sym(Y^{g(Q_{\le a})\bindownarrow})\rangle_\F.\]
    
    \item 
    It suffices to prove that for any $p\in \gr(W\sqcup Y) \subset  P$, $p\in g(Q_{\le a})\bindownarrow$ if and only if $f(p)\in Q_{\le a}$. If $p\in g(Q_{\le a})\bindownarrow$, then $p\le g(q)$ for some $q\in Q_{\le a}$. Then $f(p)\le f(g(q))\le q \le a$ by \Cref{productofadjoints}~\ref{item:fg is deflating and gf is inflating}, thus $f(p)\in Q_{\le a}$. Conversely, if $f(p)\in Q_{\le a}$, then we have $p\le g(f(p))\in g(Q_{\le a})$ by the same remark, and thus we have $p\in g(Q_{\le a})\bindownarrow$.
    \item 
    It suffices to show that for any $b\in \gr(f(W)\sqcup f(Y))\subset Q$, $b\le \bigvee Q_{\le a}$ if and only if $b\in Q_{\le a}$. If  $b\le \bigvee Q_{\le a}$. Then, $b \le \bigvee Q_{\le a}\le a$, thus $b\in Q_{\le a}$. The converse follows by the definition of the join $\bigvee Q_{\le a}$. %
\end{enumerate}

\vspace{2mm}

    Since every isomorphism in \eqref{eq:sequence of iso} is canonical, the isomorphisms $N_{a}\cong \langle f(W)|f(Y)\rangle _{a}$ commutes with the structure maps. Therefore,
    we obtain the isomorphism $N\cong \langle f(W)|f(Y)\rangle$.

    Conversely, assume that $N\cong \langle f(
    W)|f(Y)\rangle$. Then we have an isomorphism between the first and the last terms \eqref{eq:sequence of iso}. In addition, all isomorphisms in \eqref{eq:sequence of iso} hold except for $(\ast)$.  Therefore, the isomorphism $(\ast)$ also holds. 
    Now, we show that $M|_{P}\circ g \cong N|_Q$.
    Let $a\in Q$. As $Q_{\le a}=a\bindownarrow$ in $Q$, we have $g(Q_{\le a})\bindownarrow = g(a\bindownarrow)\bindownarrow=g(a)\bindownarrow$ in $P$. Therefore, $N_a\cong \colim\, M|_{g(a)\bindownarrow} \cong M_{g(a)}$. Since every isomorphism of \eqref{eq:sequence of iso} is canonical, the isomorphisms $N_{(a)}\cong M_{g(a)}$ commute with the structure maps. Hence, we have $M|_{P}\circ g \cong N|_Q$ and $f$ is an $\calE$-edit from $M$ and $N$.
\end{proof}

\begin{example}\label{ex:presentation_example}%
    From \Cref{ex:edit_makes_simpler}, recall the $\mathsf{Grid}_2$-edit $f:M\to N$. We have $M\cong \langle W_1|Y_1\rangle$ where
    \begin{align*}
        W_1&:=\left\{(w_1,(0,0)),(w_2,(1,0))\right\}\\
        Y_1&:=\left\{(w_2,(1,1)),(w_1-w_2,(2,0))\right\}.
    \end{align*}
    By \Cref{thm:Galois_edit_is_presentation_edit}, with the \edit{} $f$, we obtain a presentation $N \cong \langle f(W_1) | f(Y_1)\rangle$ where
    \begin{align*}
        f(W_1)&:=\left\{(w_1,(0,0)),(w_2,(2,0))\right\}\\
        f(Y_1)&:=\left\{(w_2,(2,1)),(w_1-w_2,(2,0))\right\}.
    \end{align*}
    The presentations of $M$ and $N$ described above are illustrated in \Cref{fig:example_of_a_presentation_edit}.

\begin{figure}
\centering
\begin{tikzpicture}[scale=1.2, every node/.style={scale=0.9}]
\tikzset{
  axis/.style={->, very thin},
  box/.style={draw=black!40, very thin},
  genpoint/.style={circle, thin,inner sep=2pt,fill=blue!100,draw=black},
  relpoint/.style={circle, thin ,inner sep=2pt,fill=red!100,draw=black}
}

\begin{scope}[scale=1, shift={(0,0)}]

  \fill[\persistcolor!40] (0,0) rectangle (1,1);
  \fill[\persistcolor!40] (0,1) rectangle (1,2.3);
  \fill[\persistcolor!60] (1,0) rectangle (2,1);
  \fill[\persistcolor!40] (2,0) rectangle (3.3,1);
  \fill[\persistcolor!40] (1,1) rectangle (2,2.3);
  \fill[\persistcolor!20] (2,1) rectangle (3.3,2.3);

  \node at (1.5,0.5) {$\F^2$};
  \node at (2.5,0.5) {$\F$};
  \node at (0.5,1.5) {$\F$};
  \node at (2.5,1.5) {$0$};

  \draw[->, thick] (0,-0.5) -- (0,2.5) node[above] {$y$};
  \draw[->, thick] (-0.7,0) -- (3.5,0) node[right] {$x$};

  \draw[-, thin, color=\persistcolor!40] (0,1) -- (1,1);
  \draw[-, thin, color=\persistcolor!40] (1,1) -- (1,2.3);
  \draw[-, thin] (1,1) -- (3.3,1);
  \draw[-, thin] (1,0) -- (1,1);
  \draw[-, thin] (2,0) -- (2,2.3);

  \node[genpoint,label=below left:$w_1$] at (0,0) {};
  \node[genpoint,label=below:$w_2$] at (1,0) {};

  \node[relpoint,label=left:$w_2$] at (1,1) {};
  \node[relpoint,label=below:$w_1-w_2$] at (2,0) {};

  \node at (-0.5,1) {$P$};

\end{scope}

\begin{scope}[shift={(5,0)}]

  \fill[red!40] (0,0) rectangle (2,2.3);
  \fill[red!40] (0,0) rectangle (3.3,1);
  \fill[red!20] (2,1) rectangle (3.3,2.3);

  \node at (1,0.8) {$\F$};
  \node at (2.5,1.5) {$0$};

  \draw[->, thick] (0,-0.5) -- (0,2.5) node[above] {$y$};
  \draw[->, thick] (-0.7,0) -- (3.5,0) node[right] {$x$};

  \draw[-, thin] (2,1) -- (3.3,1);
  \draw[-, thin] (2,1) -- (2,2.3);

  \node[genpoint,label=below left:$w_1$] at (0,0) {};
  \node[genpoint,label=above right:$w_2$] at (2.03,0.03) {};

  \node[relpoint,label=left:$w_2$] at (2,1) {};
  \node[relpoint,label=below:$w_1-w_2$] at (1.97,-0.03) {};
  \node at (-0.5,1) {$Q$};

\end{scope}

\node at (1.5,-0.7) {$M$};
\node at (6.5,-0.7) {$N$};

\end{tikzpicture}

\caption{Illustration of $M$ and $N$ with their presentations from Example~\ref{ex:presentation_example}. Blue dots represent generators, and red dots represent relations.}
\label{fig:example_of_a_presentation_edit}

\end{figure}
\end{example}

\begin{corollary}\label{cor:edit_by_monotone_function}
    Let $M\cong \langle W_1|Y_1\rangle$ and $N\cong\langle W_2|Y_2\rangle$ be finite presentations of $\R^d$-persistence modules on the common symbol sets, i.e., $W=\sym(W_1)=\sym(W_2)$ and $Y=\sym(Y_1)=\sym(Y_2)$.\footnote{We remark that $M$ and $N$ admit presentations on %
common symbol sets if and only if  $M$ and $N$ have \emph{presentation} \emph{matrices with the same underlying matrix} %
\cite[Definition~3.1]{bjerkevik2021ell}.} %
    Let $P$ and $Q$ be grids containing $\gr_M(X)$ and $\gr_N(X)$, respectively. Let $h:P\to Q$ be a grid morphism. If $h\circ \gr_M = \gr_N$ and $h$ admits a right adjoint, then $h$ is a $\grid$-edit from $M$ to $N$.
\end{corollary}
\begin{proof}
    Since $h\circ \gr_M = \gr_N$, we have that $N\cong \langle h(W_1)|h(Y_1)\rangle$. Now, by \Cref{thm:Galois_edit_is_presentation_edit}, $h$ is a $\grid$-edit from $M$ to $N$.
\end{proof}

\begin{lemma}\label{lem:edit_induces_lesnick_interleaving'}
    Let $f:P\to Q$ be a monotone function, and suppose the grade sets of $W_1$ and $Y_1$ are contained in $P$.
    Then $\langle W_1 | Y_1\rangle $ and $\langle f(W_1) | f(Y_1)\rangle$ are $\dist{f}$-interleaved.
\end{lemma}

This lemma appears in the literature; see \cite[Theorem 4.4]{lesnick2015theory} and the proof of \cite[Theorem 3.7]{bjerkevik2021ell}. However, we have not found any written proof of it, and thus we include a proof in \Cref{appendix:omitted proofs}.

\begin{proposition} 
    \label{prop:interleavingedit}
    For any finitely presented $\R^d$-persistence modules $M$ and $N$, we have that
    $ d_I(M,N)\le \dedi^{\Poset}(M,N).$ 
\end{proposition}

\begin{proof}
Assume that $P,Q\in \ob(\Poset)$, and $M$ and $N$ are $P$- and $Q$-presented (\Cref{def:alternative definition of finitely presented over P}), and $f:P\to Q$ is a $\Poset$-\edit{} $M\rightarrow N$.   By \Cref{prop:edit_is_universal}, it suffices to show that $\dint(M,N)\leq \dist{f}$.
    Since $M$ is $P$-presented, there exists a finite presentation $M\cong\langle W_1|Y_1\rangle$ such that the grade sets of $W_1$ and $Y_1$ are contained in $P$.
    By \Cref{thm:Galois_edit_is_presentation_edit}, 
    we have $N\cong \langle f(W_1)|f(Y_1)\rangle$. %
    Now the claim directly follows from \Cref{lem:edit_induces_lesnick_interleaving'}.
\end{proof}

Note that the inequality $d_I\le \dedi^{\Poset}$ implies $d_I\le \dedi^{\JsLat}$. Nevertheless, in \Cref{appendix:another proof}, we provide a separate proof of the latter inequality that does not rely on \Cref{thm:Galois_edit_is_presentation_edit} or \Cref{lem:edit_induces_lesnick_interleaving'}. Instead, that proof adapts ideas from \cite{gulen2022galois} and is order-theoretic.

Next, we outline the proof of the inequality $\dedi^{\grid} \le d_I$. 
 Note that, by \Cref{lesnickpresentation}, if $\R^d$-persistence modules $M$ and $N$ are $\ep$-interleaved for some $\ep>0$, they admit presentations on common symbol sets. %
Let $\gr_M$ and $\gr_N$ be their grading functions. It is not difficult to see that, by utilizing \Cref{ex:presentations_are_valid} and \Cref{lesnickpresentation}, the interpolation $\{(1-t)\gr_M+t\gr_N\}_{t\in[0,1]}$ provides a family of presentations over the common symbol sets, and that this family is a geodesic path between $M$ and $N$. Meanwhile, recall that \Cref{cor:edit_by_monotone_function} provides a sufficient condition for a grid morphism to be a $\grid$-edit between presentations on common symbol sets. \Cref{cor:edit_by_monotone_function} together with the two lemmas below enables us to construct a $\grid$-edit sequence of persistence modules 
that appear along the geodesic path, whose cost does not exceed $\ep$. This implies the inequality $\dedi^{\grid} \le d_I$. %

For the next lemma, we introduce the following definition. The \textbf{injectivity radius} of a function $f:W\to \R^d$ is defined to be
    \[\mathrm{inj}(f):=\min\left\{\frac{1}{2}\abs{f(w)_i-f(w')_i}:w,w'\in P,\ i\in [d] , f(w)_i\neq f(w')_i
 \right\}.\]
In words, the injectivity radius of $f$ is half the minimal the $\infty$-norm distance between two \emph{distinct} points of the smallest grid containing $f(W)$.
This definition extends in a straightforward manner to any finite \emph{graded} set $W$, by taking the injectivity radius of its grading function, i.e.,
$\inj(W):=\inj(\gr_W(W))$.

\begin{lemma}[Easy-edit lemma]\label{lem:localstability} %
    Let $M\cong \langle W_1|Y_1\rangle$ and $N\cong\langle W_2|Y_2\rangle$ be finite presentations of $\R^d$-persistence modules on the common symbol sets, i.e., $W=\sym(W_1)=\sym(W_2)$ and $Y=\sym(Y_1)=\sym(Y_2)$. For the disjoint union $X:=W\sqcup Y$, we have the grading functions $\gr_M, \gr_N:X\to \R^d$ for $M$ and $N$, respectively. 
    If $\norm{\gr_M-\gr_N}_\infty< \inj (\gr_N)$, then there exists a $\grid$-edit $f:M\to N$ such that $\dist{f}\le \norm{\gr_M-\gr_N}_\infty$. %
\end{lemma}

\begin{proof}[Proof]
Let $\ep:=\norm{\gr_M-\gr_N}_\infty$.
    Let $P$ be the smallest grid containing the set $\left(\gr_N (W\sqcup Y)+\ep\ones\right)\cup \gr_M (W\sqcup Y)$, and let $Q$ be the smallest grid containing the set $%
    \gr_N(W\sqcup Y)$ (cf. \Cref{rem:smallest grid}). 
    By %
    \Cref{rem:constructible_over_grid_closure},
    $M$ is $P$-presented, and $N$ is $Q$-presented. By \Cref{cor:edit_by_monotone_function}, it suffices to construct a monotone function $f:P\to Q$ such that 
$f\circ \gr_M=\gr_N$.

We define $f$ by sending each $p\in P$ to the \emph{unique} point $q\in Q$ such that $\norm{p-q}_{\infty}\le \ep$.

\noindent\emph{Claim 1.} $f$ is well-defined: such a point $q\in Q$ exists and is unique. 

   \begin{proof}
   Fix $p\in P$, and 
   we first prove the existence of $q\in Q$. By the definition of $Q$ (cf. \Cref{rem:smallest grid}), it suffices to show that for each $i\in [d]$, there exists unique $q_i\in \pi_i\left(\gr_N(W\sqcup Y)\right)$ such that $\abs{p_i-q_i}\le \ep$.\\
   Let $i\in[d]$. By the definition of $P$, $p_i$ is either in $\pi_i\left(\gr_N (W\sqcup Y)+\ep\ones\right)$ or in $\pi_i \left(\gr_M(W\sqcup Y)\right)$. 
   \begin{enumerate}[label=(\roman*)]
       \item If $p_i\in \pi_i\left(\gr_N (W\sqcup Y)+\ep\ones\right)=\pi_i\left(\gr_N(W\sqcup Y)\right) + \ep$, then we take $q_i:=p_i-\ep \in \pi_i\left(\gr_N(W\sqcup Y)\right)$.
       \item If $p_i \in \pi_i \left(\gr_M(W\sqcup Y)\right)$, then there exists $w\in W\sqcup Y$ such that $\gr_M(w)_i=p_i$. As $\ep = \norm{\gr_M-\gr_N}_\infty$, $\abs{p_i-\gr_N(w)_i}\le \ep$. Thus we take $q_i:=\gr_N(w)_i \in \pi_i \left(\gr_N(W\sqcup Y)\right).$
   \end{enumerate}
    By construction given above, $f$ is a grid morphism. And the uniqueness part of \emph{Claim 1} %
    guarantees that $f$ does not violate the monotonicity. 
    
   For the uniqueness, suppose that $q,q'\in Q$ are within distance $\ep$ from $p$. By the triangle inequality, $\lVert q-q'\rVert_\infty \le 2\ep$. Hence, for each $i\in [d]$, we have that
   \[\frac{1}{2}\abs{q_i-q_i'}\le\ep=\norm{\gr_M-\gr_N}_\infty<\inj(\gr_N).\]
   Then, by the definition of $\inj(\gr_N)$, we have that $q_i=q_i'$, and thus $q=q'$, as desired.   
   \end{proof}

   By the uniqueness property and that $\norm{p-f(p)}_\infty\le \ep$ for all $p\in P$, for any $x\in X$, we have $f(\gr_M(x))=\gr_N(x)$. Thus, $f\circ \gr_M = \gr_N$.

    To apply \Cref{cor:edit_by_monotone_function}, we lastly show that $f$ has a right adjoint, which can be shown by proving that $f$ preserves the bottom (see the discussion after \Cref{def:differentclassesforpersistencemoduledomain}). Let $\bot_Q$ be the bottom of $Q$. As $P$ contains $Q+\ep\ones$,  we have $\bot_Q + \ep\ones \in P$ and $f(\bot_Q + \ep\ones) = \bot_Q$. As $\bot_P \le \bot_Q + \ep\ones$, we have $f(\bot_P)\le \bot_Q $. Thus $f$ preserves the bottom, completing the proof.
\end{proof}

Let $f,g: X\to \R^d$ be two functions. We say that $g$ is \emph{$f$-compatible} (\cite{bjerkevik2021ell}) if,  for all $x,y\in X$ and $i\in [d]$,  $f(x)_i\le f(y)_i$ implies $g(x)_i\le g(y)_i$. This definition implies that, if $f(x)_i=f(y)_i$ for some $x,y\in X$ and $i\in [d]$, then $g(x)_i=g(y)_i$. 
\begin{lemma}\label{lem:compatibility}
     Let $P$ and $Q$ be the smallest grid containing $f(X)$ and $g(X)$, respectively.
If $g$ is $f$-compatible, then there is a grid morphism $h:P\to Q$ such that $h\circ f=g$ and $\dist{h}\le \norm{f-g}_\infty$.
\end{lemma}
\begin{proof}
    By \Cref{rem:smallest grid}, we have that $P=\prod_{i=1}^d \pi_i(f(X))$ and $Q=\prod_{i=1}^d \pi_i(g(X))$.
    In order to construct $h:P\to Q$, we construct, for each $i$, a monotone function $h_i:\pi_i(f(X))
\to \pi_i(g(X))$.
Fix $i\in [d]$ and let $p_i\in \pi_i(f(X))$. Then, there exists $x\in X$ such that $f(x)_i=p_i$. We define $h_i(p_i)$ as $g(x)_i\in \pi_i(g(x))$; this map $h_i$ is well-defined and monotone since $g$ is $f$-compatible. In addition, we have that \[\abs{p_i-q_i}= \abs{f(x)_i-g(x)_i} \le \norm{f-g}_\infty,\]
implying that $\norm{h_i}\le \norm{f-g}_\infty$ and in turn $\norm{h}\le \norm{f-g}_\infty$. %

For any $i\in [d]$ and any $x\in X$, we have $h_i(f(x)_i)=h_i(p_i)=q_i=g(x)_i$, and thus $h_i\circ (\pi_i \circ f) = \pi_i\circ g$, as functions $X\to \pi_i(g(X))$. As $h_i\circ \pi_i|_{P}=\pi_i|_{Q}\circ h$, we have that 
\[\pi_i \circ g = h_i\circ \pi_i \circ f = \pi_i \circ (h \circ f).\] Since this equation holds for every $i\in [d]$, we have that $g=h\circ f$, as desired.
\end{proof}

\begin{proposition}\label{editinterleaving}
   For any finitely presented $\R^d$-persistence modules $M$ and $N$,
    $\dedi^\carte(M,N)\le \dint(M,N)$. %
\end{proposition}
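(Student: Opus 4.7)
Fix any $\ep > \dint(M,N)$; it suffices to prove $\dedi^\carte(M,N) \le \ep$ and then let $\ep \searrow \dint(M,N)$. By \Cref{lesnickpresentation}, choose finite graded sets $W_1, W_2$ and finite homogeneous sets $Y_1 \subset \fr[W_1, W_2(-\ep)]$, $Y_2 \subset \fr[W_1(-\ep), W_2]$ presenting $M$ and $N$ as the Lesnick pair of parameter $\ep$. For each $s \in [0,\ep]$ define the interpolating module
\[
M(s) := \langle W_1(-s),\ W_2(-(\ep-s)) \mid Y_1(-s),\ Y_2(-(\ep-s))\rangle,
\]
which is a well-defined presentation by \Cref{rem:translationsubmodule}\ref{rem:translationsubmodule3}, and satisfies $M(0) \cong M$ and $M(\ep) \cong N$. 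For $0 \le s \le s' \le \ep$, rebasing with $W_1' := W_1(-s)$ and $W_2' := W_2(-(\ep-s'))$ realizes $(M(s), M(s'))$ as a Lesnick pair of parameter $s'-s$, so \Cref{ex:costofbijection}\ref{ex:costofbijection2} supplies a canonical bijection of presentations of cost $s'-s$.

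The plan is to fix a finite subdivision $0 = s_0 < s_1 < \cdots < s_n = \ep$ and apply \Cref{lem:localstability} to each consecutive pair $(M(s_{k-1}), M(s_k))$; this telescopes to $\dedi^\carte(M,N) \le \sum_k (s_k - s_{k-1}) = \ep$. The hard part will be satisfying the strict hypothesis $\mathrm{cost} < \inj(G_1 \cup R_1)$ of \Cref{lem:localstability}: the injectivity radius of $M(s)$ collapses to $0$ whenever an inter-block $i$-coordinate $a+s$ coincides with $b+\ep-s$, which occurs precisely at the finite ``bad'' set
\[
B := \left\{ \tfrac{b-a+\ep}{2} : a \in \pi_i(\supp(W_1 \cup Y_1)),\ b \in \pi_i(\supp(W_2 \cup Y_2)),\ i \in [d] \right\} \cap (0,\ep).
\]

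The saving observation is that at each $s^* \in B$ \emph{itself}, the coinciding coordinate pairs contribute $0$ to $\inj$ and are excluded by definition, so $\inj(M(s^*))$ is bounded below by some $\delta_{s^*} > 0$ depending only on intra-block differences and non-coinciding inter-block distances. I would therefore choose the subdivision so that $B \subset \{s_0, \ldots, s_n\}$, every step incident to $s^* \in B$ has length strictly less than $\delta_{s^*}$, and on each ``safe'' subinterval bounded away from $B$ the step sizes are smaller than the positive minimum of $\inj(M(\cdot))$ there (which exists because the distance to $B$ is bounded below on a closed subinterval of $[0,\ep] \setminus B$). Since $\dedi^\carte$ is symmetric, \Cref{lem:localstability} may be applied with either endpoint as the source $\langle G_1 \mid R_1\rangle$; for any step touching $s^* \in B$ I would use $M(s^*)$ as the source, giving $\mathrm{cost} < \delta_{s^*} \le \inj(M(s^*))$, while on safe steps either side works. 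The main subtlety is the bookkeeping of finitely many ``spikes'' of $\inj$ at bad values to certify that the scheduling can be achieved with finitely many edits.
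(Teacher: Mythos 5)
Your proposal is correct and follows essentially the same route as the paper: interpolate between $M$ and $N$ through the Lesnick-type presentations $F_t=\langle W_1(-t),W_2(-\ep+t)\,|\,Y_1(-t),Y_2(-\ep+t)\rangle$ obtained from \Cref{lesnickpresentation}, use the canonical bijections of cost $\abs{t-t'}$ from \Cref{ex:costofbijection}, and chain applications of \Cref{lem:localstability} so that the costs telescope to $\ep$. The only divergence is how the finite subdivision meeting the hypothesis $\mathrm{cost}<\inj$ is produced: your explicit scheduling around the finite bad set $B$ works (precisely because, as you note, coinciding coordinates are excluded from $\inj$, so $\inj(M(s^*))>0$ even at $s^*\in B$), whereas the paper exploits the same positivity more cleanly by setting $r(t):=\inj(W_1(-t)\cup W_2(-\ep+t)\cup Y_1(-t)\cup Y_2(-\ep+t))>0$ for \emph{every} $t$, covering $[0,\ep]$ by the intervals $(t-r(t),t+r(t))$, and extracting a finite subcover with points inserted in the overlaps, which dispenses with the bad-set bookkeeping entirely.
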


\begin{remark}
\label{rem:in-depth} 

  In proving \Cref{editinterleaving}, we utilize the idea of \emph{decomposing} a geodesic path between persistence modules with respect to the interleaving distance. This idea has appeared in the literature. For example, in the proofs of \cite[Theorem~1.7(iv)]{bjerkevik2021ell} and \cite[Theorem~7.1]{gulen2022galois}, geodesic paths are decomposed into pieces, each of which corresponds to our notion of Galois-edit, although this notion is not made explicit there. %

More broadly, the idea of decomposing a geodesic path trace back to earlier works, including \cite[Section~3.3]{cohen2007stability} and \cite[Section~4]{cohen2006vines}; however, these works are formulated
in terms of the bottleneck distance (rather than the interleaving distance) and do not involve the notion of Galois connection. 

In fact, we provide two different proofs of a critical claim in the proof of \Cref{editinterleaving}. The first proof parallels that of \cite[Theorem 1.7 (iv)]{bjerkevik2021ell} (and %
also resembles
that of \cite[Theorem 7.1]{gulen2022galois}), while the second proof utilizes the compactness of the interval $[0,1]\subset \R$ as in the proof of the bottleneck stability of the persistence diagram \cite[Section 3.3]{cohen2007stability}. 

\end{remark}

\begin{proof}[Proof of \Cref{editinterleaving}] %

    If $\dint(M,N)=\infty$, there is nothing to prove. Assume that $\ep=\dint(M,N)<\infty$. As the persistence modules $M$ and $N$ are finitely presented, by \cite[Theorem 6.1]{lesnick2015theory}, $M$ and $N$ are $\ep$-interleaved. 
    By \Cref{lesnickpresentation}, there exist $d$-graded sets $W_1,W_2$ and homogeneous sets $Y_1\subset \fr[W_1,W_2(-\ep)]$, $Y_2\subset \fr[W_1(-\ep),W_2]$ such that $M\cong \langle W_1,W_2(-\ep)|Y_1,Y_2(-\ep)\rangle$ and $N\cong \langle W_1(-\ep),W_2|Y_1(-\ep),Y_2\rangle$.

    Consider the set $X=\sym(W_1)\sqcup \sym(W_2) \sqcup \sym(Y_1) \sqcup \sym(Y_2)$. Denote by $\gr_M$ and $\gr_N$ the grading functions of the presentations of $M$ and $N$, respectively. %
    $$ F_t:= (1-t)\gr_M+t\gr_N \quad \quad t\in [0,1],$$
    such that $F_0=\gr_M$ and $F_1=\gr_N$. Then we have an associated presentation $M_t=\langle W_1(-\ep t),W_2(-\ep(-1+t)) | Y_1(-\ep t),Y_2(-\ep(1+t))\rangle $ such that the grading function of $M_t$ is exactly $F_t$. And of course, $M_0\cong M$ and $M_1\cong N$.

    We finish the proof by showing the following claim.
    
    \vspace{2mm}
    \noindent \emph{Claim.} There is a finite sequence $\{0\le t_0< t_1<\cdots t_n\le 1\}$ such that for each $1\le i<n$, there is a $\grid$-edit $f_i$ between $M_{t_i}$ and $M_{t_{i+1}}$ with $\dist{f_i}\le \dist{F_{t_{i+1}}-F_{t_i}}_\infty %
    $.

    The claim indeed completes the proof since 
    \[ \dedi^{\grid} (M,N)\le \sum_{i=1}^{n-1} \dist{f_i} \le \sum_{i=1}^{n-1} \norm{F_{t_{i+1}}-F_{t_i}}_\infty\sum_{i=1}^{n-1}\ep(t_{i+1}-t_i)=\ep(t_n-t_1)=\ep.\]

    As mentioned before, we provide two different proofs of the claim. %
    \vspace{2mm}

   \noindent \emph{First Proof of \emph{Claim}:}    
    We say $t\in[0,1]$ is \emph{critical}, if there exist $x,y\in X$ and $i\in [d]$ such that $F_t(x)_i=F_t(y)_i$ and there is a $t'\in[0,1]$ such that $F_{t'}(x)_i\neq F_{t'}(y)_i$. In words, $t\in [0,1]$ is critical if grid lines merge at time $t$ along the continuous evolution of  $P$ into $Q$ given by $(F_t)_{t\in[0,1]}$. 
    Recall also the definition of injectivity radius before \Cref{lem:localstability}. As $X$ is finite, there are only finitely many critical points in $[0,1]$. Let $\{0\le t_0<t_1<\cdots <t_n\le 1\}$ be the set of critical points, together with $0$ and $1$. For convenience, let $t_0:=0$ and $t_{n}:=1$. For any $0\le j<n$, consider any $s\in (t_{j-1},t_j)$. To apply \Cref{lem:compatibility}, we verify the conditions which says:
    For any $x,x'\in X$ and $i\in [d]$, if $F_s(x)_i\le F_s(x')_i$, then $F_{t_j}(x)_i\le F_{t_j}(x')_i$ and $F_{t_{j-1}}(x)_i\le F_{t_{j-1}}(x')_i$. %
    As there is no critical point in $(t_{j-1},t_j)$, and the value of $F_t(x)_i$ is linear in $t$, either $F_{t_j}(x)_i>F_{t_j}(x')_i$ or $F_{t_{j-1}}(x)_i>F_{t_{j-1}}(x')_i$ implies $F_s(x)_i> F_s(x')_i$.
    Applying \Cref{lem:compatibility}, we have grid functions $h$ and $k$ such that $h\circ F_s=F_{t_j}$ and $k\circ F_s=F_{t_{j-1}}$. 
    So choose $t_\frac{2j-1}{2}$ for every $0\le j <n$, so that we have the following sequence of functions
    $$ F_{t_0}, F_{t_\frac12}, F_{t_1},\ldots ,F_{t_j},F_{t_{\frac{2j+1}{j}}},F_{t_{j+1}}, \ldots, F_{t_{n-1}}, F_{\frac{2n-1}{2}}, F_{t_{n}}.$$

    For any adjacent functions $F_t$ and $F_{t'}$ in the sequence above, %
    there is a grid morphism $h$ which preserves the bottom and satisfies either $h\circ F_t=F_{t'}$ or $h\circ F_{t'}=F_t$. Therefore, it gives a $\grid$-edit between $M_t$ and $M_{t'}$ by \Cref{cor:edit_by_monotone_function}.
    As stated in \Cref{lem:compatibility}, we have $\dist{h}\le \norm{F_t-F_{t'}}_\infty$. There is a $\grid$-edit between $M_t$ and $M_{t'}$ of cost $\le \norm{F_t-F_{t'}}_\infty$. %

    \vspace{2mm}
   \noindent \emph{Second Proof of \emph{Claim}:}   For $t\in [0,1]$, let $r(t):=\inj(F_t)/\ep$, which is positive. Then for any $s\in (t-r(t),t+r(t))$, we have $\norm{F_t-F_{s}}=\ep\abs{t-{s}}<\inj(F_t)$. By \Cref{lem:localstability}, there is a $\grid$-edit $f$ between $M_t$ and $M_{s}$ such that $\dist{f}\le \norm{F_t-F_{s}}_\infty.$ %

    The open intervals $U_t:=(t-r(t),t+r(t))$ for $t\in[0,1]$ cover $[0,1]$. By compactness, there is a finite $S=:\{t_0<t_1<\cdots<t_n\}\subset [0,1]$ such that $U_t$ for $t\in S$ cover $[0,1]$. 
    We assume $U_{t_i}\cap U_{t_{i+1}}$ is nonempty for $0\le i<n$ by choosing $S$ to be minimal. Also, we may assume that $t_0=0$ and $t_n=1$, and still we have the property that $U_{t_i}\cap U_{t_{i+1}}$ is nonempty. Consider the sequence $0=t_0\le t_{\frac12}\le t_1\cdots\le t_{n-1}\le t_{\frac{2n-1}2}\le t_n=1$ by choosing $t_{\frac{2i+1}{2}}\in U_i\cap U_{i+1}$ for each $0\le i<n$. Then for any adjacent $t$ and $t'$ in the sequence, there is a $\grid$-edit between $M_t$ and $M_{t'}$ of cost $\le \norm{F_t-F_{t'}}_\infty$.%

\end{proof}

\begin{remark}\label{rem:existence_of_optimal_edit_sequence}
The first proof of %
the claim and the proof of \Cref{lem:compatibility} %
provide a construction of a $\carte$-edit sequence between any pair of finitely presented $\R^d$-persistence modules $M$ and $N$ with $\dint(M,N) <\infty$, whose cost is at most $\dint(M,N)$. Since $\carte$ is a subcategory of $\Poset$, this sequence is also a $\Poset$-edit sequence, implying that $\dedi^{\Poset}(M,N)\leq \dint(M,N)$. Invoking $\dint(M,N)\leq \dedi^{\Poset}(M,N)$ from \Cref{prop:interleavingedit}, we obtain that the cost of this sequence achieves the value of $\dedi^{\Poset}(M,N)$, i.e., such a $\grid$-edit sequence is %
optimal among all $\Poset$-edit sequences between $M$ and $N$.%
\end{remark}

\begin{proof}[Proof of \Cref{thm:dedi=dint}]
    By \Cref{prop:interleavingedit}, \Cref{relationbetweenedits}~\ref{item:relationbetweenedits1}, and \Cref{editinterleaving}, we have that
    \(\dint(M,N)\le \dedi^\Poset(M,N)\le  \dedi^\jsLat(M,N) \le \dedi^{\carte}(M,N)\le \dint(M,N).\)
\end{proof}

\section{Alternative proof of the bottleneck stability theorem}
\label{sec:functoriality}

In this section, we show that any $\mathsf{Grid}_1$-\edit{} $f:M\to N$ induces a matching of cost at most $\dist{f}$ between $\barc(M)$ and $\barc(N)$ (\Cref{thm:functoriality_of_barcode_1} and \Cref{cor:bottleneck_le_dist}).
Invoking Theorem~\ref{thm:dedi=dint} and \Cref{cor:single-edit is enough}, these provide an alternative proof of the well-known  bottleneck stability theorem (\Cref{thm:bottleneck stability}).

\begin{theorem}[Galois-Induced Matching Theorem]\label{thm:functoriality_of_barcode_1}
    For any $\mathsf{Grid}_1$-\edit{} $f:M\to N$, we have that 
    \begin{equation}\label{barcode calculation formula}
        \barc(N)=f\ \barc(M):=\lmulti [f(p),f(q)):[p,q)\in \barc(M),\ f(p)<f(q)\rmulti,
    \end{equation}
    with the convention that $f(\infty):=\infty$. 
\end{theorem}

We provide two different proofs of this theorem: The first proof utilizes the Galois-Induced Presentation Theorem (\Cref{thm:Galois_edit_is_presentation_edit}) and the second proof utilizes the M\"obius inversion theorem.%

\begin{proof}[First Proof of \Cref{thm:functoriality_of_barcode_1}]\label{proof:barcde_calculation_formula_using_presentation}
   Let $\barc(M)=:\lmulti [b_i,d_i):i\in [n]\rmulti$ for some $n\in \N$.
    Then we have $M\cong\langle W|Y \rangle$, where $W=\{(i,b_i):i\in [n]\}$ and $Y=\{(i,d_i):i\in [n] \text{ and } d_i\neq \infty\}$. By \Cref{thm:Galois_edit_is_presentation_edit}, we have $N\cong \langle f(W)|f(Y)\rangle$. Note that $f(W)=\{(i,f(b_i):i\in [n]\}$ and $f(Y)=\{(i,f(d_i):i\in [n] \text{ and } d_i\neq \infty\}$. Therefore, $\barc(N)= \lmulti [f(b_i),f(d_i)):i\in [n], f(b_i)<f(d_i) \rmulti$, as desired.
\end{proof}

For the second proof %
, we first review the relationship between the rank function and the barcode of a finitely presented $\R$-persistence module.

Let $P:=\{p_1<p_2<\cdots<p_n\}\subset\R$. 
Suppose that an $\R$-persistence module $M$ is $P$-presented. Then $M$ is constant on the half-open intervals $[p_i,p_{i+1})$ for $1\le i<n$, and $[p_n,\infty)$. 
    The \textbf{rank function} of $M$ is 
        $\rk(M):\Dgm(\R)\to \Nz$ given by, for each $[a,b)\in \Int(\R)$,
        \begin{equation}
        \rk(M)([a,b))=\begin{cases}
            \rank \ M_{a\le b^-}
            &\text{if $b=p_2,\ldots,p_n,\infty$}\\
            \rank \ M_{a\le b} &
            \text{otherwise,}
        \end{cases}
    \end{equation}    
   where $b^-$ is the largest element of $P$ strictly less than $b$.
The M\"obius inversion theorem \cite{rota1964foundations} directly implies: 
\begin{theorem}[Algebraic-combinatorial view on the barcode {\cite{abeasis1981geometry,patel2018generalized}}]\label{thm:mobius} 
The unique function $f:\Int(\R)\to \R$ satisfying the condition 
\[\rk(M)(I)=\sum_{\substack{J\in \Int(\R)\\J\supset I}}f(J), \ \ \mbox{for all } I\in\Int(\R)\]
is given by $f:J\mapsto \mbox{(the multiplicity of $J$ in $\barc(M)$)}$.
\end{theorem}

 \begin{proof}[Second Proof of \Cref{thm:functoriality_of_barcode_1}]

  Suppose that $P,Q\subset \R$ are finite, $M$ is $P$-presented, $N$ is $Q$-presented, $f:P\to Q$ is a grid morphism that has a right adjoint $g$, and $M|_P\circ g \cong N|_Q$. We have that $f(\bot_P)= \bot_Q$ by \Cref{productofadjoints}~\ref{item:productofadjoints1}. 
  
Since $M$ is $P$-presented, $\barc(M)$ consists solely of intervals whose endpoints are in $P\cup\{\infty\}$.
By $\Int(\R;P)$, let us denote the set of all such intervals. In the following proof, $\barc(M)$ will be regarded as the function $\Int(\R)\to \Nz$
given by $J\mapsto \mbox{(the multiplicity of $J$ in $\barc(M)$)}$ whose support is contained in $\Int(\R;P)$. A similar convention is assumed for $f\ \barc(M)$ and for $\barc(N)$.

Let $p\in P\cup \{\infty\}$ be an element other than the minimum. Let $p^-$ denote the \emph{predecessor} of $p$, i.e., the maximum of the subposet $p\bindownarrow\setminus\{p\}\subset P$.  For any $q\in P\cup \{\infty\}$, we have that $p^-<q$ if and only if $p\le q$. Thus for any $[p,q), [s,t)\in \Int(\R;P)$, $[p,q)\subset [s,t)$ if and only if $[p,q^-]\subset [s,t)$. 
The following claim is useful.\vspace{2mm}
    
\noindent    \emph{Claim.} For $s,t\in Q\subset \R$ with $s\leq t$ and $[p,q)\in \Int(\R;P)$, 
\begin{equation}\label{eq:equivalence}
[s,t]\subset [f(p),f(q)) \iff [g(s),g(t)]  \subset [p,q).
\end{equation}
To prove the claim,
        it suffices to show that
        \begin{enumerate}[label=(\roman*)]
            \item $f(p)\le s \iff p \le g(s)$, and \label{item:first}
            \item $t< f(q)\iff g(t)<q$. 
            \label{item:second}
        \end{enumerate} \Cref{{item:first}} is immediate from the assumption that $f\dashv g$. Let us prove \Cref{item:second}. When $q=\infty$, there is nothing to show and thus assume that $q<\infty$. Then, 
     \[   t< f(q) \iff f(q)\not\le t
        \iff q \not\le g(t)
        \iff g(t)< q,\]
    completing the proof of the claim.
    
    For any $[s,t)\in \Int(\R;Q)$, we have
    \begin{align*}
        \sum_{\substack{[u,v)\in \Int(\R;Q)\\
    [s,t)\subset[u,v)}} f\ \barc(M)([u,v))
    &=\sum_{\substack{[u,v)\in \Int(\R;Q)\\
    [s,t)\subset[u,v)}}\sum_{\substack{[p,q)\in \Int(\R;P)\\
    [u,v)=[f(p),f(q))}} \brac(M)([p,q))\\
    &=\sum_{\substack{[p,q)\in \Int(\R;P)\\
    [s,t^-]\subset[f(p),f(q))}}\brac(M)([p,q))\\
    &=\sum_{\substack{[p,q)\in \Int(\R;P)\\
    [g(s),g(t^-)]\subset[p,q)}}\brac(M)([p,q))&\mbox{by \emph{Claim}}\\
    &=\rk \ M_{g(s)\le g(t^-)}\\
    &=\rk \ N_{s\le t^-}&\mbox{since  $M|_P\circ g \cong N|_Q$}.
    \end{align*}
By Theorem~\ref{thm:mobius}, we also have that \[\rk \ N_{s\le t^-}=\sum_{\substack{[u,v)\in \Int(\R)\\
    [s,t)\subset[u,v)}}\barc(N)([u,v))=\sum_{\substack{[u,v)\in \Int(\R;Q)\\
    [s,t)\subset[u,v)}}\barc(N)([u,v)).\] By the uniqueness stated in that theorem, we obtain $\barc(N)=f\ \barc(M)$, as desired.
    \end{proof}

\begin{remark} 
One may wish to utilize the Galois-edit formulation of the interleaving distance (\Cref{thm:dedi=dint}) to seek alternative proofs for stability of other invariants of multi-parameter persistence. A notable candidate is the well-known stability of the fibered barcode under the matching distance \cite{cerri2013betti, landi2018rank}, with a recent alternative proof \cite[Theorem 1.7 (iii)]{bjerkevik2021ell}.

However, \Cref{thm:dedi=dint} does not yield a simple alternative proof of the stability of the fibered barcode; it is not diffcult to observe that a Galois-edit between persistence modules $M \to N$ does not necessarily induce a Galois-edit $M|_\ell \to N|_\ell$ between their restrictions to a line $\ell\subset \R^d$, but instead a Galois-edit sequence.%

\end{remark}

   In \Cref{appendix:optimal sequence can be long}, we review the notions of matchings between barcodes and the bottleneck distance. From \eqref{barcode calculation formula}, it is clear that $f$ induces a matching of cost at most $\dist{f}$ between $\barc(M)$ and $\barc(N)$, i.e., $\bott\circ(\barc\times\barc)$ is 1-Lipschitz under a single $\calE$-edit:

\begin{corollary}\label{cor:bottleneck_le_dist}
    For any $\mathsf{Grid}_1$-\edit{} $f:M\to N$, 
    $\bott(\barc(M),\barc(N))\le \dist{f}.$
\end{corollary}

Since $\dint=\dedi^\calE$ (\Cref{thm:dedi=dint}), %
the preceding corollary and \Cref{cor:single-edit is enough}  directly imply: 
\begin{theorem}[Bottleneck stability theorem for finitely presented persistence modules {\cite{chazal2009proximity,chazal2016structure}}]\label{thm:bottleneck stability}
    For any finitely presented $\R$-persistence modules $M$ and $N$, 
     $\bott(\barc(M),\barc(N))\le \dint(M,N).$
\end{theorem}

In the following remark, we highlight how the Galois connection and the Galois-edit reformulation of the interleaving distance (Theorem~\ref{thm:dedi=dint}) facilitate the proof above.

\begin{remark}\label{rem:usefulness of Galois connection}
We exploit the \emph{Galois connection} $f \dashv g$ to establish the equivalence given in \Cref{eq:equivalence}; this equivalence facilitates the application of Theorem~\ref{thm:mobius}, which is an application of the \emph{M\"obius inversion theorem}. In addition, Theorem~\ref{thm:dedi=dint} allows us to utilize  the \emph{maximality of the edit distance} (\Cref{prop:edit_is_universal}).
\end{remark}

\section{Discussion}\label{sec:discussion}

Via a notion of Galois-edit between persistence modules, we establish a Galois-edit distance formulation of the interleaving distance (\Cref{thm:dedi=dint}). A key step for this theorem is to establish a correspondence between the notion of Galois-edit for persistence modules and, in a suitable sense, that for their presentations (\Cref{thm:Galois_edit_is_presentation_edit}).

As a consequence of this Galois-edit formulation of the interleaving distance, we obtain characterizations of interleaving-stable invariants in terms of edit-stable invariants, and vice versa, for multi-parameter persistence modules (\Cref{cor:one stability implies another,cor:single-edit is enough}). To demonstrate the utility of these results, we provide a succinct proof of the bottleneck stability theorem (\Cref{thm:bottleneck stability}). We expect our results to expedite the exploration of stable invariants in multi-parameter persistence.

\bibliographystyle{plain}
\bibliography{bib.bib}

\appendix
\crefalias{section}{appendix}
\addcontentsline{toc}{section}{Appendix}

\addtocontents{toc}{\protect\setcounter{tocdepth}{0}}

\section{Omitted proofs}\label{appendix:omitted proofs}

\begin{proof}[Proof of \Cref{prop:components_of_category}] %

We assume that $\calE=\jsLat$ or $\grid$; the remaining case $\calE=\Poset$ follows from \Cref{thm:dedi=dint}.

First, we prove the forward implication.
Let $f:M\to N$ be an $\calE$-\edit{}, i.e., $M$ is $P$-presented, $N$ is $Q$-presented,  $f:P\leftrightarrows Q:g$ is a Galois connection, and $M|_P\circ g=N|_Q$. 
By \Cref{productofadjoints}~\ref{productofadjoints0}, the right adjoint $g$ preserves the top, and thus $N(\top_Q)\cong M(g(\top_Q))=M(\top_P)$, and $N(\infty)\cong M(\infty)$. 
Therefore, if $M$ and $N$ are $\R^d$-persistence modules with $M(\infty)\not\cong N(\infty)$, then there is no $\calE$-edit sequence between $M$ and $N$, or equivalently $\dedi^\calE(M,N)=\infty$.
    
Next, we prove the backward implication.
Let $M$ be any $P$-presented $\R^d$-persistence module for some $P\in \mathrm{ob}(\calE)$. Let $f: P\to \{\vec{0}\}$ be the unique monotone function. By \Cref{prop:existence of adjoint},
$f$ has the right adjoint $g:\{\vec{0}\}\to P$ defined by $g(\vec{0})=\top_P$. Then $M|_P\circ g$ is the persistence module on the singleton $\{\vec{0}\}$ given by $\vec{0}\mapsto M(\infty)$. 
Thus, for the free $\R^d$-persistence module $\left(\F_{\vec{0}\binuparrow}\right)^n:=\bigoplus_{0\leq i <n} \F_{\vec{0}\binuparrow}$ with $n=\dim M(\infty)$, we have that
\[M|_P\circ g\cong \left(\F_{\vec{0}\binuparrow}\right)^n\Big|_{\{\vec{0}\}}.\]
This shows that $f$ is an $\calE$-\edit{} from $M$ to $\left(\F_{\vec{0}\binuparrow}\right)^n$ with $\dist{f}<\infty$. 
Thus, for any $\R^d$-persistence module $N$ with $M(\infty)\cong N(\infty)$,
we have $\dedi^{\calE}(M,N)\le \dedi^{\calE}\left(M,\left(\F_{\vec{0}\binuparrow}\right)^n\right)+\dedi^{\calE}\left(\left(\F_{\vec{0}\binuparrow}\right)^n,N\right)  <\infty$.
\end{proof}

\begin{proof}[Proof of \Cref{lem:edit_induces_lesnick_interleaving'}]
    Let $\ep:=\norm{f}$.
  We wish to define natural transformations \[F:\langle W_1 | Y_1\rangle \Rightarrow \langle f(W_1) | f(Y_1)\rangle(\ep)\quad \text{ and }\quad G:\langle f(W_1) | f(Y_1)\rangle\Rightarrow \langle W_1 | Y_1\rangle(\ep)\] such that Condition \eqref{eq:interleaving} holds.  Fix $a\in \R^d$. 
  Recall that for any graded set $W$, $W^{a\bindownarrow}:=\{w\in W: \gr(w)\le a\}$.
   Since $\ep=\norm{f}$,
   \begin{enumerate}[label=(\roman*)]
   \item  $\sym(W_1^{a\bindownarrow})\subset \sym\bigl(f(W_1)^{(a+\vec\ep)\bindownarrow}\bigr)$ and $\sym\bigl(f(W_1)^{a\bindownarrow}\bigr)\subset \sym(W_1^{(a+\vec\ep)\bindownarrow})$, and \item  $\sym(Y_1^{a\bindownarrow}) \subset \sym\bigl(f(Y_1)^{(a+\vec\ep)\bindownarrow}\bigr)$ and $\sym\bigl(f(Y_1)^{a\bindownarrow}\bigr) \subset \sym(Y_1^{(a+\vec\ep)\bindownarrow})$.
   \end{enumerate}
   
    Then, we define $F_a:\langle W_1 | Y_1\rangle_a \to \langle f(W_1) | f(Y_1)\rangle_{a+\vec\ep}$ as the composition of the canonical maps
    \[
    \langle W_1 | Y_1\rangle_a=\langle \sym(W_1^{a\bindownarrow}) | \sym(Y_1^{a\bindownarrow})\rangle_\F \to\langle \sym\bigl(f(W_1)^{(a+\vec\ep)\bindownarrow}\bigr) | \sym\bigl(f(Y_1)^{(a+\vec\ep)\bindownarrow}\bigr)\rangle_\F=\langle f(W_1) | f(Y_1)\rangle_{a+\vec\ep}.
    \]
    Similarly, define $G_a:\langle f(W_1) | f(Y_1)\rangle_a \to \langle W_1 | Y_1\rangle_{a+\vec\ep}$ as the map induced by the inclusion $\sym\bigl(f(W_1)^{a\bindownarrow}\bigr)\hookrightarrow \sym(W_1^{(a+\vec\ep)\bindownarrow})$,
    which is well-defined since $\sym\bigl(f(Y_1)^{a\bindownarrow}\bigr) \subset \sym(Y_1^{(a+\vec\ep)\bindownarrow})$.  
    
    The composition $G_{a+\vec\ep}\circ F_a$ is the map 
    \[
    \langle W_1 | Y_1\rangle_a=\langle \sym(W_1^{a\bindownarrow}) | \sym(Y_1^{a\bindownarrow})\rangle_\F \to \langle \sym(W_1^{(a+2\vec\ep)\bindownarrow}) | \sym(Y_1^{(a+2\vec\ep)\bindownarrow})\rangle_\F=\langle W_1 | Y_1\rangle_{a+2\vec\ep}
    \] 
    which sends, for each $w\in W_1$,  $\overline{w}\in \langle W_1 | Y_1\rangle_a$ to $\overline{w}\in \langle W_1 | Y_1\rangle_{a+2\vec\ep}$. This map is identical to the structure map 
    $ \langle W_1 | Y_1\rangle_a \to \langle W_1 | Y_1\rangle_{a+2\vec\ep}$ of $\langle W_1 | Y_1\rangle$ as described in \Cref{rem:vector space of modules}. By symmetry, we also have that $F_{a+\vec\ep}\circ G_a$ equals the structure map   $ \langle f(W_1) | f(Y_1)\rangle_a \to \langle f(W_1) | f(Y_1)\rangle_{a+2\vec\ep}$ of $\langle f(W_1) | f(Y_1)\rangle$. 
\end{proof}

\section{Weaker statement than \Cref{prop:interleavingedit} with an order-theoretic proof
}
\label{appendix:another proof}

In this section, we provide a proof of a weaker statement than \Cref{prop:interleavingedit} 
that does not use presentations of persistence modules, and is instead more order-theoretic, adapting ideas from \cite[Proposition 6.4]{gulen2022galois}.
To this end, we begin with a reinterpretation of \Cref{def:interleaving}.
 \begin{remark}[{\cite[Section 1.4]{bubenik2017interleaving}}]\label{rem:interleaving}  %
 For any $\ep\geq0$, the poset $\R^d \sqcup_\ep \R^d$ is defined as the set \[\R^d\sqcup\R^d=\{(a,t):a\in \R^d,\ t=1,2\},\] equipped with the order $\le_\ep$ given by $(a,s)\le_\ep (b,t)$ if and only if $a+\abs{s-t}\vec\ep \le b$ for $s,t\in \{1,2\}$. We denote the inclusions $a\mapsto (a,1)$ and $b\mapsto (b,2)$ by $\dotlessi_1$ and $\dotlessi_2$, respectively.

 Any two $\R^d$-persistence modules $M$ and $N$ are $\ep$-interleaved if and only of there exists an $\ep$-\emph{interleaving} between $M$ and $N$, i.e., a persistence module $L:\R^d \sqcup_\ep\R^d\to \vect$ such that the following diagram commutes.
    $$\begin{tikzcd}
        \R^d\arrow{r}{\dotlessi_1} \arrow[ddr, "M"']& \R^d\sqcup_\ep\R^d \arrow[dd, "L",dotted] &  \R^d\arrow[ddl, " N"]\arrow[l, swap, "\dotlessi_2"]\\
         &&\\
         &\vect &
    \end{tikzcd}$$
\end{remark}

\begin{proposition}[Weaker version of {\Cref{prop:interleavingedit}}] 
    \label{prop:interleavingedit_weak}
    For any finitely presented $\R^d$-persistence modules $M$ and $N$, we have that
    $ d_I(M,N)\le \dedi^{\JsLat}(M,N).$ 
\end{proposition}

\begin{proof}

By assumption, there exist subposets $P,Q\subset \R^d$ such that $M$ is $P$-presented and $N$ is $Q$-presented. By \Cref{prop:edit_is_universal}, it suffices to show the following claim.\vspace{3mm}

   \noindent\emph{Claim.}
   Let $f:P\to Q$ be a monotone function with a right adjoint $g:Q\to P$ such that $M|_P\circ g=N|_Q$, i.e.,  $f$ is a $\jsLat$-\edit{} $M\rightarrow N$. Then, $\dint(M,N)\leq \dist{f}$.

    We prove this claim. Let $i_1$ and $i_2$ be the inclusions $P\hookrightarrow \R^d$ and $Q\hookrightarrow \R^d$, respectively. By     \Cref{productofadjoints}~\ref{item:productofadjoints2}
    , the induced maps $(i_1)_\bot:P_\bot\hookrightarrow \R^d_\bot$ and $(i_2)_\bot:Q_\bot\hookrightarrow \R^d_\bot$ have right adjoints $\lfloor-\rfloor_P$ and $\lfloor-\rfloor_Q$. Then, the solid arrows in the following diagram commute. 
$$\begin{tikzcd}
    \R^d_\bot
      \arrow[ddr, "M_\bot"', bend right=20] 
      \arrow[r, bend left=15, "\lfloor-\rfloor_P"]     
    & P_\bot \arrow[dd, "(M|_P)_\bot"' {xshift = 2pt, yshift=0pt}]  \arrow[l, dashed,bend left=15, "(i_1)_\bot"]  \arrow[r, bend left=15, dashed, "f_\bot"] 
    & Q_\bot 
      \arrow[l, bend left=15, "g_\bot"] 
      \arrow[ddl, "(N|_Q)_\bot" {yshift=5pt, xshift=3pt}]
    \arrow[r, bend left=15, dashed, "(i_2)_\bot"] 
    & \R^d_\bot 
      \arrow[l, bend left=15,"\lfloor-\rfloor_Q"] 
      \arrow[ddll, bend left=30,"N_\bot" {yshift=4pt, xshift=4pt}] 
    \\
    &&& \\
    & \vect & &
\end{tikzcd}
$$
    Let $f_2:=(i_2)_\bot\circ f_\bot$ and $g_2:=g_\bot\circ \lfloor-\rfloor_Q$. Then, by \Cref{remark:galois basics}~\ref{rem:compositionofGalois}, we have that $f_2\dashv g_2$. Also, let $f_1:=(i_1)_\bot$ and $g_1:=\lfloor - \rfloor_P$ so that $f_1\dashv g_1$. 
    Then the solid arrows in the following diagram commute.
    $$\begin{tikzcd}
        \R^d_\bot \arrow[r,bend left=15, "g_1"]\arrow[ddr, "M_\bot"', bend right=20]& P_\bot \arrow[dd, "(M|_P)_\bot"' {xshift = 2pt, yshift = 0pt}]  \arrow[l, dashed,bend left=15, "f_1"]  \arrow[r, bend left=15, dashed, "f_2"]&  \arrow[l,bend left = 15, "g_2"] \R^d_\bot\arrow[ddl, "N_\bot", bend left=20]\\
         &&\\
         &\vect &
    \end{tikzcd}$$
    
   Define $Q'$ as the preordered set $(\R^d\sqcup \R^d,\preceq)$ where $(a,t)\preceq (b,s)$  iff $g_t(a)\le g_s(b)$ in $P_\bot$ for $a,b\in \R^d$ and $s,t\in \{1,2\}$. Let $L':Q'\to \vect$ be the functor defined by \[L'(a,t):=M_\bot(g_t(a)), \quad \quad L'((a,t)\preceq (b,s))={(M_\bot)}_{g_t(a)\le g_s(b)}.\] 
   Then, invoking that $M|_P\circ g=N|_Q$, the canonical inclusions $j_t:\R^d\to \R^d\sqcup \R^d=Q'$ for $t=1,2$ make the following diagram commute.
    \begin{equation}\begin{tikzcd}
        \R^d\arrow{r}{j_1} \arrow[ddr, " M"']& Q' \arrow[dd, "L'"] &  \R^d\arrow[ddl, " N"]\arrow[l, "j_2"']\\
         &&\\
         &\vect &
    \end{tikzcd}\label{eq:3}\end{equation}

    Let $\ep:=\dist{f}$. Recall the poset $\R^d\sqcup_\ep \R^d$ from \Cref{rem:interleaving}.\vspace{2mm}

    \noindent\emph{Subclaim.} The canonical bijection from $\R^d\sqcup_\ep\R^d=(\R^d\sqcup \R^d, \le_\ep)$ to $Q'=(\R^d\sqcup \R^d,\preceq)$ is monotone.

    \begin{proof} 
    Note that $\dist{f_2|_{P}}=\dist{i_2\circ f}=\ep$ as $i_2$ is a metric embedding. For $a,b\in \R^d$ and $t\in \{1,2\}$, if $(a,t)\le_\ep (b,t)$, then $a\le b$ by definition, and thus we have that $(a,t)\preceq (b,t)$. Therefore, it only remains to prove that if $(a,s)\le_\ep (b,t)$ with $s\neq t$, then $(a,s)\preceq (b,t)$.
    
    First, we show that if $(a,1)\le_\ep (b,2)$ (i.e.,  $a+\vec\ep \le b$), then $(a,1)\preceq (b,2)$. For this, it suffices to show that $(a,1)\preceq (a+\vec\ep,2)$, or equivalently, $g_1(a)\le g_2(a+\vec\ep)$ for any $a\in \R^d$. Fix $a \in \R^d$.
    If $g_1(a)=\bot$, then there is nothing to prove, and thus we assume that $g_1(a)\neq \bot$, which implies that $g_1(a)\in P\subset \R^d$.
    Since $f_2 \dashv g_2$ and $f_2(p)\leq f_2(p)$ for all $p\in P_\bot$, we have that $p\le g_2(f_2(p)) $ for all $p\in P_\bot$, and consequently \[g_1(a)\le g_2(f_2(g_1(a))).\] %
        Also, we have:
    \begin{align*}
        f_2(g_1(a))&\le g_1(a)+\vec\ep 
        && \text{since $g_1(a)\in P$ and $\dist{f_2|_{P}}=\ep$}\\
        &=f_1(g_1(a)) + \vec\ep && \text{since $f_1|_P$ is the inclusion $i_1:P\hookrightarrow\R^d$}\\
        &\le a +\vec\ep. && \text{since $g_1(a)\le g_1(a)$ and $f_1\dashv g_1$}
    \end{align*}
     Thus, we have that $g_1(a)\le g_2(f_2(g_1(a)))\le g_2(a+\vec\ep)$, as desired.

    Next, we show that if $(b,2)\le_\ep (a,1)$ (i.e.,  $b+\vec\ep \le a$), then $(b,2)\preceq (a,1)$. It suffices to show that $(b,2)\preceq (b+\vec\ep,1)$, or equivalently, $g_2(b)\le g_1(b+\vec\ep)$ for any $b\in \R^d$. Fix $b\in \R^d$. 
    If $g_2(b)=\bot$, then there is nothing to prove, and thus we assume that $g_2(b)\not=\bot$, i.e.,  $g_2(b)\in P\subset \R^d$.
    Since $f_1 \dashv g_1$ and $f_1(p)\leq f_1(p)$ for all $p\in P_\bot$, we have that $p\le g_1(f_1(p))$ for all $p\in P_\bot$, and consequently \[g_2(b)\le g_1(f_1(g_2(b))).\] 
    Also, we have:
    \begin{align*}
        f_1(g_2(b))&=g_2(b) && \text{since $f_1|_P$ is the inclusion $i_1:P\hookrightarrow\R^d$}\\
        &=g_2(b)-\overrightarrow{\dist{f_2|_P}}+\vec\ep&&\text {since $\dist{f_2|_P}=\ep$}\\
        &\le f_2(g_2(b))+\vec\ep&& \text{since $g_2(b)\in P$ and by the definition of $\dist-$}\\
        &\le b+\vec\ep && \text{since $g_2(b)\le g_2(b)$ and $f_2\dashv g_2$}\\
    \end{align*}
    Thus, we have that $g_2(b)\leq g_1(f_1(g_2(b)))\le g_1(b+\vec\ep)$, as desired.
    \end{proof} 

    By the previous subclaim, we obtain the following commutative diagram
    $$\begin{tikzcd} 
        \R^d\arrow{r}{\dotlessi_1} \arrow[ddr, "M"']& \R^d \sqcup_\ep \R^d \arrow[dd, " L"] &  \R^d\arrow[ddl, "N"]\arrow[l, "\dotlessi_2"']\\
         &&\\
         &\vect &
    \end{tikzcd}$$\\
    where $L$ is the restriction of $L'$ to $\R^d\sqcup_\ep \R^d$.
    By \Cref{rem:interleaving}, we have that $\dint(M,N)\le \ep=\dist{f}$, completing the proof of \emph{Claim}.
\end{proof}

\section{Comparison of constructibility and finite presentability}\label{appendix:two constructibility}

In this section, we show that, when restricted to $\R^d$-persistence modules, the notion of constructibility in \cite{gulen2022galois} is, in a suitable sense, equivalent to finite presentability %
given in our \Cref{KanExtensionOfPersistenceModules}~\ref{item:constructible}.

\begin{definition}[{\cite[Definition 2.10, Proposition 2.11, Definition 4.1]{gulen2022galois}}]\label{def:constructibility}
 Let $P$ be a poset.
\begin{enumerate}[label=(\roman*)]
    \item A \textbf{co-closure operator} on $P$ is a monotone function $c:P\to P$ such that $c$ is the right adjoint of the inclusion $c(P)\hookrightarrow P$. Equivalently, a monotone function $c:P\to P$ is a co-closure operator on $P$ if and only if $c(p)\le p$ for all $p\in P$ and $c\circ c= c$. 
    \item  \label{item:consructibility}A $P$-persistence module $M$ is called \textbf{constructible or $S$-constructible} if there is a finite set $S\subset P$ and there is a co-closure operator $c:P\to P$ with  image $S$ such that $M = M\circ c$. 
\end{enumerate}
\end{definition}

For any poset $P$ containing an infinite chain with no lower bound, no co-closure operator on $P$ can have a finite image. Consequently, for such $P$, there exists no constructible $P$-persistence module%
.
Since $\R^d$ is such a poset, any
$\R^d$-persistence module is not constructible%
. %
Nevertheless, the notions of constructibility and finitely presentability are equivalent only up to the inclusion of a bottom element:

\begin{proposition}\label{prop:constructibility_extends_original}
    Let $M$ be an $\R^d$-persistence module. Then, $M_\bot$ is constructible %
    if and only if  $M$ is finitely presented%
    .%
\end{proposition}

\begin{proof} %
    We first prove the forward implication. 
    Let $S\subset \R^d_\bot$ be  finite
    and assume that $M_\bot$ is $S$-constructible. Then,
    there is a co-closure operator $c: \R^d_\bot \to \R^d_\bot$ with its image $S$. The map $c$ is the right adjoint of the inclusion $S\hookrightarrow \R^d_\bot$. %
    Then, by \Cref{productofadjoints}~\ref{productofadjoints0}, $c$ %
    preserves all meets.  As $\R^d_\bot$ is a meet-semilattice and $c$ is surjective onto $S$, 
    $S$ also is a meet-semilattice. 
    We choose a sufficiently large $\ell\in \R$ so that $\vec \ell=(\ell,\ldots,\ell)$ is greater than all elements in $S$. 
    Then, since $c$ on $S$ is the identity and, $c$ is monotone,  $c(\vec\ell)$ is the top element in $S$.  
    This shows that $S$ is a finite bounded meet-semilattice; it is, therefore,  a lattice. 
    By \Cref{productofadjoints}~\ref{productofadjoints0}, the inclusion $S\hookrightarrow \R^d_\bot$ preserves all joins; in particular, $\bot_S=\bot_{\R^d_\bot}=:\bot$.
    As $S$ has all joins, for all $p,q\in S\setminus\{\bot\}$, we have $p \lor_{S} q = p \lor_{\R^d_{\bot}} q$. As the binary joins cannot be $\bot$, we have $p\lor_{S\setminus\{\bot\}} q = p \lor_{\R^d} q$. Thus, $S\setminus\{\bot\}$ is
    closed under binary joins in $\R^d$, i.e., $S\setminus\{\bot\}$ is a \joingrid{}. 
    
    By \Cref{productofadjoints}~\ref{item:productofadjoints2}, the right adjoint of $S\hookrightarrow \R^d_\bot$ is the floor function $\lfloor - \rfloor_{S\setminus\{\bot\}}$, i.e., $c=\lfloor - \rfloor_{S\setminus\{\bot\}}$. Therefore, $M_\bot = M_\bot \circ c =M_\bot \circ \lfloor - \rfloor_{S\setminus\{\bot\}}.$ By \Cref{item:Kan_extension_by_floor_function1}, $M$ is $P$-presented and thus finitely presented. %

    Next, we prove the reverse implication. Let $P\subset \R^d$ be a poset and suppose that $M$ is $P$-presented.%
    Let $Q$ be the smallest grid containing $P$ (cf.~\Cref{rem:smallest grid}). By \Cref{item:constructible over a larger poset}, $M$ is $Q$-presented. By \Cref{item:Kan_extension_by_floor_function1}, we have $M_\bot = M_\bot \circ \lfloor - \rfloor_Q$.
    By viewing $\lfloor -\rfloor_Q$ as a map $ \R^d_\bot\to \R^d_\bot$, it
    is a co-closure operator on $\R^d_\bot$ with finite image $Q_\bot$. %
    Thus, $M_\bot$ is constructible%
    . 
\end{proof}

\section{Comparison of the Galois-edit distance with earlier works}\label{appendix:comparison with gulen's work}
In this section, we compare the Galois-edit distance (\Cref{def:edit distance}) with other edit(-type) distances in the applied topology literature. We also compare the Galois-edit formulation of the interleaving distance (\Cref{thm:dedi=dint}) with previously known connections between the interleaving distance and Galois connections.

Most notably, \cite{gulen2022galois} establishes a direct connection between the interleaving distance and Galois connections. Recall from \cite{gulen2022galois} that a \textbf{Galois insertion} is a Galois connection $f : P\leftrightarrows{} Q : g$ satisfying $f \circ g = \mathrm{id}_Q$. 

\begin{definition}[{Rephrase of \cite[Definition~6.1]{gulen2022galois}}]\label{def:interleaving-galois}
Let $P$ and $Q$ be posets embedded in some metric space $(X, d_X)$. 
An $\ep$-interleaving between two persistence modules $M : P \to \vect$
and 
$N : Q \to \vect$ consists of a persistence module $\Gamma : R \to \vect$ 
with 
Galois insertions $f_M:R\leftrightarrows{}P:g_M$ and $f_N:R \leftrightarrows{} Q:g_N$ such that the solid arrows in the following diagram commute 
and $\ep=\max_{r \in R} d_X(f_M(r), f_N(r))$.
\[
\begin{tikzcd}
& R \arrow[dl, bend left=15, "f_M", dashed] \arrow[dr, bend right=15, "f_N"', dashed] \arrow[dd, "\Gamma" description] & \\
P \arrow[dr, "M"'] \arrow[ur, bend left=15, "g_M"] & 
& Q \arrow[dl, "N"] \arrow[ul, bend right=15, "g_N"'] \\
& \vect &
\end{tikzcd}
\]
The \textbf{interleaving distance} between $M$ and $N$ is the infimum of $\ep\geq0$ for which there exists an $\ep$-interleaving between $M$ and $N$.
\end{definition}

\begin{proposition}[{\cite[Proposition 6.4]{gulen2022galois}}]\label{prop:equivalence}Let $M$ and $N$ be 
$\R$-persistence modules, i.e., their indexing posets are $\R$. Then, the two interleaving distances between $M$ and $N$ from \Cref{def:interleaving,def:interleaving-galois} coincide.
\end{proposition}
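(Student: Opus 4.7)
The plan is to establish equality of the two distances by exhibiting explicit, parameter-preserving conversions in both directions between classical $\ep$-interleavings of \Cref{def:interleaving} and Galois insertion $\ep$-interleavings of \Cref{def:interleaving-galois}. Since each conversion sends an $\ep$-interleaving to an $\ep$-interleaving with the same $\ep$, taking infima yields the equality.

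For the direction from a classical $\ep$-interleaving to a Galois one, I would use the common indexing poset $R := \R \sqcup_\ep \R$ from \Cref{rem:interleaving} together with its persistence module $L : R \to \vect$ that encodes the interleaving. Setting $\Gamma := L$, $g_M := \dotlessi_1$, and $g_N := \dotlessi_2$, the commuting conditions $M = \Gamma \circ g_M$ and $N = \Gamma \circ g_N$ hold by construction. Then I would apply \Cref{prop:existence of adjoint}\ref{item:Galois2} to produce left adjoints; a direct computation of principal upsets in $R$ under $\leq_\ep$ gives
\[
f_M((b,1)) = b, \quad f_M((b,2)) = b+\ep, \quad f_N((a,1)) = a+\ep, \quad f_N((a,2)) = a.
\]
The identities $f_M \circ g_M = \id_\R$ and $f_N \circ g_N = \id_\R$ are immediate, so both Galois connections are insertions, and $|f_M(r) - f_N(r)| = \ep$ for every $r \in R$, producing a Galois insertion $\ep$-interleaving with parameter exactly $\ep$.

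For the converse, given a Galois insertion $\ep$-interleaving $(R, \Gamma, f_M \dashv g_M, f_N \dashv g_N)$, I would first derive the key order relations $g_M(a) \leq g_N(a+\ep)$ and $g_N(b) \leq g_M(b+\ep)$ in $R$. The first follows by evaluating $|f_M(r) - f_N(r)| \leq \ep$ at $r = g_M(a)$: since $f_M \circ g_M = \id_\R$, one obtains $f_N(g_M(a)) \leq a + \ep$, which by the adjunction $f_N \dashv g_N$ is equivalent to $g_M(a) \leq g_N(a+\ep)$; the second is symmetric. Then I would define
\[
F_a := \Gamma(g_M(a) \leq g_N(a+\ep)), \qquad G_b := \Gamma(g_N(b) \leq g_M(b+\ep)),
\]
and verify the classical interleaving identities using functoriality of $\Gamma$ and $M = \Gamma \circ g_M$, $N = \Gamma \circ g_N$: for instance, $G_{a+\ep} \circ F_a = \Gamma(g_M(a) \leq g_M(a+2\ep)) = M_{a \leq a+2\ep}$.

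I do not expect a serious obstacle, since the argument essentially translates between the shift language of \Cref{def:interleaving} and the adjunction language of \Cref{def:interleaving-galois}. The most delicate step is confirming the explicit existence of the left adjoints $f_M, f_N$ in the first direction: this requires knowing that each principal upset in $R = \R \sqcup_\ep \R$ has a minimum, which depends on the precise form of $\leq_\ep$ and on the totally ordered nature of $\R$.
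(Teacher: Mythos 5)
Your argument is correct, and it is worth noting that the paper itself offers no proof of this proposition: it is quoted directly from the cited reference (Proposition~6.4 of the Galois-connections paper), so your write-up is a self-contained justification rather than a variant of an in-paper argument. Both of your directions check out. The forward direction is essentially \Cref{rem:interleaving} plus an explicit computation of the left adjoints of the inclusions $\R\to\R\sqcup_\ep\R$, and your formulas $f_M(b,1)=b$, $f_M(b,2)=b+\ep$, $f_N(a,1)=a+\ep$, $f_N(a,2)=a$ are exactly the adjoints guaranteed by \Cref{prop:existence of adjoint}, with $f_M\circ g_M=\id$, $f_N\circ g_N=\id$ and distortion exactly $\ep$. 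The converse direction correctly extracts $g_M(a)\le g_N(a+\ep)$ and $g_N(b)\le g_M(b+\ep)$ from the insertion property together with the adjunction, and functoriality of $\Gamma$ over a poset gives both naturality of $F,G$ and the composition identities; this is, in miniature, the same mechanism the paper uses in the harder direction of \Cref{prop:interleavingedit} (there without the insertion hypothesis, which is why that proof must pass through the floor maps and the bottom element). Two cosmetic caveats: for $\ep=0$ the relation $\le_0$ on $\R\sqcup_0\R$ is only a preorder, not a partial order, but this never affects the equality of the two infima since one may work with arbitrary $\ep>0$; and one should say a word about the infinite case, which also follows immediately from your two parameter-preserving translations.
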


We remark as follows on \Cref{def:interleaving-galois} and \Cref{prop:equivalence}.

\begin{enumerate}[label=(\roman*)]
\item 
\Cref{def:interleaving-galois} provides an edit-viewpoint on the interleaving distance between persistence modules in that the persistence module $\Gamma$, equipped with the two Galois insertions, is reminiscent of, in \cite{bauer2021reeb}, a pair of Reeb quotient maps from a Reeb graph, which corresponds to a single edit in a sequence of edits between Reeb graphs considered in \cite{bauer2021reeb}. 

\item 
In view of the previous item, the edit formulation of the interleaving distance presented in \Cref{def:interleaving-galois} uses only one- or two-step edits (depending on the perspective). In contrast, all of the edit distances in \cite{bauer2021reeb,mccleary2022edit} and our Galois-edit distance (\Cref{def:edit distance}) allow sequences of edits of arbitrary length. Furthermore, for our Galois-edit distance, an optimal edit sequence between two persistence modules can be arbitrarily long (cf. \Cref{prop:edit sequence of length two is not optimal}).%

\item Nevertheless, by taking a sequence of pullbacks from a Galois-edit sequence, one can obtain a one- or two-step edit sequence as considered in \Cref{def:interleaving-galois}. However, this sequence is not necessarily a Galois-edit sequence, since the pullback posets need not be embedded in the same ambient poset $\R^d$ as the indexing posets appearing in the original Galois-edit sequence. As seen in the proof of \Cref{thm:bottleneck stability}, having the same ambient dimension $d$ can play a crucial role in establishing stability results with respect to the Galois-edit distance (equivalently, the interleaving distance).

\item \Cref{prop:equivalence} establishes an isometry only for $\R$-persistence modules, while our \Cref{thm:dedi=dint} establishes an isometry for finitely presented $\R^d$-persistence modules for \emph{any} $d$. We remark that adapting the proof of \Cref{prop:equivalence} from \cite{gulen2022galois} to the multi-parameter setting is not straightforward.
\end{enumerate}

\section{Optimal edit sequence can be arbitrarily long}\label{appendix:optimal sequence can be long}

As noted in \Cref{rem:existence_of_optimal_edit_sequence}, for any $\R^d$-persistence modules $M$ and $N$, there exists an  $\calE$-edit sequence such that its cost attains $\dedi^\calE(M,N)$. In this section, we show that such a sequence can be arbitrarily long, by  proving:

\begin{figure}
\centering
\begin{tikzpicture}[xscale=2, yscale=2.2]

    \def\n{3}
    \def\ell{1}
    \def\ss{0.25}

    \def\xM{0.4}
    \def\xN{2}

    \tikzset{
        Mbar/.style={line width=3pt, line cap=round, blue!70},
        Nbar/.style={line width=3pt, line cap=round, red!70},
        axis/.style={thin, black!100},
        tick/.style={very thin, black!100}
    }

    \draw[axis,->] (0, 5/3) -- (0, 5.6);

    \foreach \k\l in {-1/{5/3},0/2,1/{7/3},3/3,6/4,9/5}{
        \draw[tick] (-0.12, \l) -- (0.12, \l);
        \node[left, black!100] at (-0.18, \l) {\normalsize \k};
    }

    \foreach \y/\yy/\r in {16/9/ \xN+\ss, 17/9 / \xN + 2*\ss, 2/1/\xN+3*\ss, 19/9/\xM+\ss, 20/9/\xM+2*\ss, 3/1/\xN+\ss, 4/1/\xN+2*\ss, 5/1/\xN+3*\ss}{
        \draw[gray!60, dashed] (0, \y/\yy) -- (\r, \y/\yy);
    }

    \foreach \j in {1,...,\n}{
        \ifnum\j<\n
            \pgfmathsetmacro{\birthM}{\j/9 + 2}
        \else
            \pgfmathsetmacro{\birthM}{2}
        \fi
        \pgfmathsetmacro{\deathM}{\j + 2}
        \draw[Mbar] (\xM+\j*\ss, \birthM) -- (\xM+\j*\ss, \deathM);
    }

    \foreach \j in {1,...,\n}{
        \pgfmathsetmacro{\birthN}{(\j-3)/9 + 2}
        \pgfmathsetmacro{\deathN}{\j + 2}
        \draw[Nbar] (\xN+\j*\ss, \birthN) -- (\xN+\j*\ss, \deathN);
    }

    \node[blue!70, font=\normalsize] at (\xM, 4.5) {$M$};
    \node[red!70, font=\normalsize] at (\xN, 4.5) {$N$};

\end{tikzpicture}
\caption{Barcodes of the persistence modules $M$ and $N$ given in the proof of \Cref{prop:edit sequence of length two is not optimal}.
}\label{fig:edit_cannot_be_shorter_than_3}
\end{figure}

\begin{proposition}\label{prop:edit sequence of length two is not optimal}
    For every $n\in \N$, 
    there exist $\R$-persistence modules $M$ and $N$ such that there is no  $\mathsf{Grid}_1$-edit sequence between $M$ and $N$ of length $n$ that attains the value of $\dedi^{\mathsf{Grid}_1}$.
\end{proposition}

Before proving the proposition, we review 
the notion of the bottleneck distance \cite{bauer2013induced,cohen2007stability}.

Let $\Int(\R):=\{[a,b):a<b\in \R\cup\{\infty\}\}$ be the poset ordered by reverse inclusion $\supset$.  
By abuse of notation, we regard an element $[a,b)$ in $\Int(\R)$ as a point in the extended plane $\R \times (\R\cup \{\infty\})$
and thus we write
\([a,b)\in \R \times (\R\cup \{\infty\}).\)
We define 
an extended metric on the extended plane by 
\[d_\infty([a,b),[c,d)):=\max\{\abs{c-a},\abs{d-b}\},\]
with the conventions $\infty-\infty=0$, $\infty-r=\infty$, $r-\infty=-\infty$ for $r\in\R$, and $\lvert-\infty\rvert=\infty$.

A \textbf{matching} from a set $S$ to a set $T$ (written as $\chi : S \not\to T$)
is a bijection $\chi : S' \to T'$, for some subsets $S' \subseteq S$ and $T' \subseteq T$.
Formally, we regard $\chi$ as a relation $\chi \subseteq S \times T$, where
$(s,t) \in \chi$ if and only if $s\in S'$ is sent to $t\in T'$ via $\chi$. Any $x\in S'\cup T'$ is called \textbf{matched}. Any $x\in S\cup T$ outside $S'\cup T'$ is called \textbf{unmatched}.
These notions extend naturally to multisets $S$ and $T$;
for a rigorous treatment, 
see \cite[Section 2]{bauer2013induced}.

We will call a finite multiset consisting of elements from $\Int(\R)$ a \textbf{barcode}.    For $I=[a,b)\in \Int(\R)$, let $c(I)=b-a$. For any matching $\chi$ between two barcodes $A$ and $B$,   let \[
    \mathrm{cost}(\chi):=\max\left\{\max\{ d_\infty (I,J):{(I,J)\in \chi}\},\ \frac{1}{2}\max \{c(I) :I\in A\cup B \mbox{ and $I$ is unmatched}\} \right\},
    \]
    where $\frac{1}{2}\infty:=\infty$.  
    The \textbf{bottleneck distance } between $A$ and $B$ is 
    $  \bott(A,B):=\min_{\chi:A\not\to B} \cost(\chi). 
    $

\begin{proof}[Proof of \Cref{prop:edit sequence of length two is not optimal}]
We only prove the case of $n=2$, which naturally extends to the case of arbitrary $n>2$.
    Assume that $M$ and $N$ have the following barcodes (see \Cref{fig:edit_cannot_be_shorter_than_3}).
    \begin{align*}
        \barc(M)&=\lmulti I_1=\left[\frac{1}{3},3\right), I_2=\left[\frac{2}{3},6\right), I_3=\left[0,9\right)\rmulti
        \\
        \barc(N)&=\lmulti J_1=\left[-\frac{2}{3},3\right), J_2=\left[-\frac{1}{3},6\right), J_3=\left[0,9\right)\rmulti.
    \end{align*}
    Observe that $\bott(\barc(M),\barc(N))=1$.
    Hence, by the isometry theorem %
    \cite[Theorem 3.4]{lesnick2015theory} and \Cref{thm:dedi=dint}, we have  $\dedi^{\mathsf{Grid}_1}(M,N)=d_I(M,N)=\bott(\barc(M),\barc(N))=1$. Therefore, it suffices to show that no length-$2$ $\mathsf{Grid}_1$-edit
sequence between $M$ and $N$ has cost $1$.

     Suppose that $M\xleftrightarrow{f_1} M_1 \xleftrightarrow{f_2} N$ is a $\mathsf{Grid}_1$-edit sequence with $\dist{f_1}+\dist{f_2}=1$. 
    By \Cref{thm:functoriality_of_barcode_1}, 
    we have the induced matchings $\chi_1:\barc(M)\not\to \barc(M_1)$ by $f_1$ and $\chi_2:\barc(M_1)\not\to \barc(N)$ by $f_2$ with $\cost(\chi_1)\le \dist{f_1}$ and $\cost(\chi_2)\le \dist{f_2}$.
        Since $\cost(\chi_1), \cost(\chi_2)\le 1$ and all of the intervals in $\barc(M)$ and $\barc(N)$ have length $>3$, each of the intervals in $\barc(M)$ and $\barc(N)$ is matched by either $\chi_1$ or $\chi_2$. For $i=1,2,3$, denote by $\chi_1(I_i)$ the interval matched to $I_i$, and by $\chi_2^{-1}(J_i)$ the interval matched to $J_i$. 

    We claim that $\chi_1(I_i)=\chi_2^{-1}(J_i)$ for $i=1,2,3$. Fix $i$. Suppose, for contradiction, that $\chi_1(I_i)\neq\chi_2^{-1}(J_i)$. 
    Then $\chi_1(I_i)$ is either unmatched by $\chi_2$, or  matched to $J_{i'}$ for some $i'\neq i$.
    In the former case,  \[1=\dist{f_1}+\dist{f_2}\geq \cost(\chi_1)+\cost(\chi_2)\ge d_\infty(I_i,\chi_1(I_i))+c(\chi_1(I_i))\ge c(I_i)> 1,\] a contradiction. 
    In the latter case,
    \[1=\dist{f_1}+\dist{f_2}\geq \cost(\chi_1)+\cost(\chi_2)\ge d_\infty (I_i,\chi_1(I_i))+d_\infty (\chi_1(I_i),J_{i'})\ge d_\infty(I_i,J_{i'})>1,\] a contradiction.

    Hence, we have that $\chi_1(I_i)=\chi_2^{-1}(J_i)$ for $i=1,2,3$. 
  For each $i$, let $\x_i$ be the left-endpoint of the interval $\chi_1(I_i)$. Consider the following cases:
    \begin{itemize}
        \item (Case 1) The given $\mathsf{Grid}_1$-edit sequence is $M_0\xleftarrow{f_1} M_1 \xrightarrow{f_2} M_2.$\vspace{3mm}
        
        As $\barc(M_0)=f_1\barc(M_1)$ and $\chi_1$ is a matching induced from $f_1$, we have $f_1(\chi_1(I_i))=I_i$ for each $i$. If we had $\x_1 \le \x_3$, by the monotonicity of $f_1$, $\frac13 = f(\x_1) \le f(\x_3)=0$, hence a contradiction. So we have $\x_1>\x_3$. Similarly, as $\barc(M_2)=f_2\barc(M_2)$ and $\chi_2$ is a matching induced from $f_2$, we also have $\x_1<\x_3$, which is a contradiction.
        
        \item (Case 2) The given $\mathsf{Grid}_1$-edit sequence is $M_0\xrightarrow{f_1} M_1 \xleftarrow{f_2} M_2.$\vspace{3mm}
        
            As $\barc(M_1)=f_1\ \barc(M_0)$ and $\chi_1$ is a matching induced from $f_1$, we have $f_1(I_i)=\chi_1(I_i)$ for each $i$. In particular, $\x_1=f\left(\frac13\right)$, $\x_2=f_1\left(\frac23\right)$, and $\x_3=f_1\left(0\right)$. By monotonicity of $f_1$, we have the inequality $\x_2\ge \x_1\ge \x_3$. Similarly, as $\barc(M_1)=f_2\barc(M_2)$ and $\chi_2$ is induced from $f_2$, we also have $\x_3\ge \x_2\ge \x_1$. Therefore $\x_1=\x_2$, and 
        \[1\ge \cost(\chi_1)+\cost(\chi_2)\ge d_\infty(I_2,\chi_1(I_2))+d_\infty(\chi_2^{-1}(J_1),J_1)\ge \abs{\frac23-\x_2}+\abs{\x_1-\left(-\frac23\right)}\ge\frac43,\] which is a contradiction.
    \end{itemize}
    If the given $\mathsf{Grid}_1$-edit sequence is either   $M_0\xrightarrow{f_1} M_1 \xrightarrow{f_2} M_2$ or $M_0\xleftarrow{f_1} M_1 \xleftarrow{f_2} M_2$, then we can turn it into one of the above two cases by letting one of $f_1$ and $f_2$ be an identity morphism in the category $\mathsf{Grid}_1$. 
    Hence, in all cases, we obtain a contradiction. Therefore, there cannot be a $\mathsf{Grid}_1$-edit sequence of length 2 between $M$ and $N$ that attains the value $1=\dedi^{\mathsf{Grid}_1}(M,N)$.
\end{proof}

\end{document}